\numberwithin{equation}{section}
\newcolumntype{C}{>{$}c<{$}} 
\theoremstyle:=definition,remark,plain\do{%
    \expandafter\g@addto@macro\csname th@\theoremstyle\endcsname{%
      \addtolength\thm@preskip{.5\baselineskip plus .2\baselineskip minus .2\baselineskip}
      \addtolength\thm@postskip{.5\baselineskip plus .2\baselineskip minus .2\baselineskip}
    }%
  }
\tikzstyle{bwt}=[circle, fill=black, inner sep=2pt, outer sep=0pt, minimum size=5pt]  
\tikzstyle{dwt}=[circle, draw=black, fill=white, inner sep=2pt, outer sep=0pt, minimum size=5pt] 
\tikzstyle{rwt}=[circle, draw=black, fill=gray, inner sep=2pt, outer sep=0pt, minimum size=5pt] 
\newcommand{\lam}{\lambda}
\newcommand{\Lam}{\Lambda}
\renewcommand{\ge}{\geq}
\renewcommand{\le}{\leq}
\renewcommand{\cong}{\simeq}
\DeclareMathOperator{\vspn}{span}
\DeclareMathOperator{\tr}{tr} 
\DeclareMathOperator{\Supp}{supp}
\DeclareMathOperator{\EssSupp}{ess-supp}
\DeclareMathOperator{\rad}{rad}
\newcommand{\kk}{\mathsf{k}}   
\newcommand{\wun}{\vvmathbb{1}}  
\DeclarePairedDelimiter{\brac}{\lparen}{\rparen}   
\DeclarePairedDelimiter{\sqbrac}{\lbrack}{\rbrack} 
\DeclarePairedDelimiter{\set}{\lbrace}{\rbrace}
\newcommand{\st}{\mspace{5mu} {:} \mspace{5mu}}    
\DeclarePairedDelimiter{\abs}{\lvert}{\rvert}
\DeclarePairedDelimiter{\ang}{\langle}{\rangle}
\DeclarePairedDelimiterX{\comm}[2]{\lbrack}{\rbrack}{#1 , #2}  
\DeclarePairedDelimiterX{\acomm}[2]{\lbrace}{\rbrace}{#1 , #2} 
\DeclarePairedDelimiterX{\inner}[2]{\langle}{\rangle}{#1 , #2} 
\DeclarePairedDelimiterX{\super}[2]{\lparen}{\rparen}{#1 \delimsize\vert \mathopen{} #2} 
\newcommand{\ra}{\rightarrow}
\newcommand{\lra}{\longrightarrow}
\newcommand{\ira}{\hookrightarrow}    
\newcommand{\sra}{\twoheadrightarrow} 
\newcommand{\dses}[5]{0 \lra #1 \overset{#2}{\lra} #3 \overset{#4}{\lra} #5 \lra 0} 
\newcommand{\blank}{{-}}
\newcommand{\fld}[1]{\mathbb{#1}}    
\newcommand{\alg}[1]{\mathfrak{#1}}  
\newcommand{\grp}[1]{\mathsf{#1}}    
\newcommand{\aalg}[1]{\mathsf{#1}}   
\newcommand{\Mod}[1]{\mathcal{#1}}   
\newcommand{\VOA}[1]{\mathsf{#1}}    
\newcommand{\categ}[1]{\mathscr{#1}}  
\newcommand{\funct}[1]{\mathscr{#1}} 
\newcommand{\ZZ}{\fld{Z}}
\newcommand{\NN}{\ZZ_{\ge 0}} 
\newcommand{\RR}{\fld{R}}
\newcommand{\CC}{\fld{C}}
\newcommand{\SLG}[2]{\grp{#1}_{#2}}            
\newcommand{\ab}{\alg{b}}
\newcommand{\ag}{\alg{g}}
\newcommand{\ah}{\alg{h}}
\newcommand{\al}{\alg{l}}
\newcommand{\ap}{\alg{p}}
\newcommand{\as}{\alg{s}}
\newcommand{\au}{\alg{u}}
\newcommand{\az}{\alg{z}}
\newcommand{\ahs}{\ah_{\as}}
\newcommand{\finite}[1]{#1}
\newcommand{\affine}[1]{\widehat{#1}}
\newcommand{\kmg}{\affine{\ag}}
\newcommand{\SLA}[2]{\finite{\alg{#1}}_{#2}}             
\newcommand{\SLSA}[3]{\finite{\alg{#1}} \super{#2}{#3}}  
\newcommand{\cox}{\mathsf{h}}                                        
\newcommand{\dcox}{\coroot{\cox}}                                    
\newcommand{\rootsetsymb}{\Delta}
\newcommand{\srootsetsymb}{\Pi}
\newcommand{\dynkinsymb}{\Gamma}
\newcommand{\cent}[2]{\envalg{#2}^{#1}}                              
\newcommand{\cas}{\Omega}                                            
\newcommand{\rlat}{\grp{Q}}                                          
\newcommand{\wlat}{\grp{P}}                                          
\newcommand{\roots}[1]{\rootsetsymb_{#1}}                            
\newcommand{\sroots}[1]{\srootsetsymb_{#1}}                          
\newcommand{\sroot}[1]{\alpha_{#1}}                                  
\newcommand{\hroot}{\theta}                                          
\newcommand{\wvec}{\rho}                                             
\newcommand{\fwt}[1]{\omega_{#1}}                                    
\newcommand{\coroot}[1]{#1^{\vee}}                                   
\newcommand{\scroot}[1]{\coroot{\sroot{#1}}}                         
\newcommand{\dynkin}[1]{\dynkinsymb_{#1}}                            
\newcommand{\bilin}[2]{\brac{#1 , #2}}                               
\newcommand{\wgrp}{\grp{W}}                                          
\newcommand{\swgrp}[1]{\wgrp_{#1}}                                   
\newcommand{\wref}[1]{w_{#1}}                                        
\newcommand{\swref}[1]{s_{#1}}                                       
\newcommand{\supp}[1]{\Supp(#1)}                                     
\newcommand{\suppess}[1]{\EssSupp(#1)}                               
\newcommand{\bounded}{\mathcal{B}}                                   
\newcommand{\injmonoid}[1]{\rootsetsymb^{\textup{inj.}}(#1)}         
\newcommand{\umonoid}{\rootsetsymb_{\au}^{\ge}}                      
\newcommand{\nilroots}[1]{\rootsetsymb^{\textup{nil.}}_{#1}}         
\newcommand{\nilrlat}[1]{\rlat^{\textup{nil.}}_{#1}}                 
\newcommand{\type}[1]{\textup{#1}}                                   
\newcommand{\aaa}{\aalg{A}}
\newcommand{\aab}{\aalg{B}}
\newcommand{\aai}{\aalg{I}}
\newcommand{\aaj}{\aalg{J}}
\newcommand{\aau}{\aalg{U}}
\newcommand{\aaz}{\aalg{Z}}
\newcommand{\envalg}[1]{\aau\brac{#1}}                          
\newcommand{\uaffvoa}[2]{\VOA{V}_{#1} \brac{#2}}                
\newcommand{\affvoa}[2]{\VOA{L}_{#1} \brac{#2}}                 
\newcommand{\vkg}{\uaffvoa{\kk}{\ag}}                           
\newcommand{\lkg}{\affvoa{\kk}{\ag}}                            
\newcommand{\slvoa}[2]{\affvoa{#1}{\SLA{sl}{#2}}}               
\newcommand{\sovoa}[2]{\affvoa{#1}{\SLA{so}{#2}}}               
\newcommand{\spvoa}[2]{\affvoa{#1}{\SLA{sp}{#2}}}               
\newcommand{\zhu}[1]{\operatorname{\mathsf{Zhu}}\sqbrac{#1}}             
\newcommand{\ind}[1]{\operatorname{\mathsf{Ind}}\sqbrac{#1}}             
\newcommand{\mdc}{\Mod{C}}
\newcommand{\mdd}{\Mod{D}}
\newcommand{\mdh}{\Mod{H}}
\newcommand{\mdl}{\Mod{L}}
\newcommand{\mdm}{\Mod{M}}
\newcommand{\mdn}{\Mod{N}}
\newcommand{\mdp}{\Mod{P}}
\newcommand{\pind}[1]{\funct{I}_{#1}} 
\newcommand{\uinv}[1]{\funct{R}_{#1}} 
\newcommand{\cft}{conformal field theory}
\newcommand{\cfts}{conformal field theories}
\newcommand{\uea}{universal enveloping algebra}
\newcommand{\lw}{lowest-weight}
\newcommand{\lwv}{\lw{} vector}
\newcommand{\lwms}{\lw{} modules}
\newcommand{\hw}{highest-weight}
\newcommand{\hwv}{\hw{} vector}
\newcommand{\hwvs}{\hw{} vectors}
\newcommand{\hwm}{\hw{} module}
\newcommand{\hwms}{\hw{} modules}
\newcommand{\sv}{singular vector}
\newcommand{\voa}{vertex operator algebra}
\newcommand{\voas}{vertex operator algebras}
\newcommand{\svoa}{vertex operator superalgebra}
\newcommand{\svoas}{vertex operator superalgebras}
\newcommand{\lhs}{left-hand side}
\newcommand{\rhs}{right-hand side}
\newcommand{\fdim}{finite-dimensional}
\newcommand{\infdim}{infinite-dimensional}
\newcommand{\wzw}{Wess--Zumino--Witten}
\newcommand{\pbw}{Poincar\'{e}--Birkhoff--Witt}
\newcommand{\bgg}{Bern\v{s}te\u{\i}n--Gel'fand--Gel'fand}
\theoremstyle{plain}
\newtheorem{mtheorem}{Main Theorem}
\newtheorem{theorem}{Theorem}[section]
\newtheorem{corollary}[theorem]{Corollary}
\newtheorem{lemma}[theorem]{Lemma}
\newtheorem{proposition}[theorem]{Proposition}
\newtheorem{definition}[theorem]{Definition}
\newtheorem*{algorithm}{Algorithm}
\renewcommand\author@andify{%
  \nxandlist {\unskip ,\penalty-1 \space\ignorespaces}%
    {\unskip {} \@@and~}%
    {\unskip \penalty-2 \space \@@and~}%
}
\begin{document}

\title[Relaxed highest-weight modules II: classifications for affine vertex algebras]{Relaxed highest-weight modules II: \\ classifications for affine vertex algebras}

\author[K~Kawasetsu]{Kazuya Kawasetsu}
\address[Kazuya Kawasetsu]{
Priority Organization for Innovation and Excellence\\
Kumamoto University\\
Kumamoto, Japan, 860-8555.
}
\email{kawasetsu@kumamoto-u.ac.jp}

\author[D~Ridout]{David Ridout}
\address[David Ridout]{
School of Mathematics and Statistics \\
University of Melbourne \\
Parkville, Australia, 3010.
}
\email{david.ridout@unimelb.edu.au}

\begin{abstract}
	This is the second of a series of articles devoted to the study of relaxed \hwms{} over affine vertex algebras and W-algebras.  The first \cite{KawRel18} studied the simple ``rank-$1$'' affine vertex superalgebras $\slvoa{\kk}{2}$ and $\affvoa{\kk}{\SLSA{osp}{1}{2}}$, with the main results including the first complete proofs of certain conjectured character formulae (as well as some entirely new ones).  Here, we turn to the question of classifying relaxed \hwms{} for simple affine vertex algebras of arbitrary rank.  The key point is that this can be reduced to the classification of \hwms{} by generalising Olivier Mathieu's coherent families \cite{MatCla00}.  We formulate this algorithmically and illustrate its practical implementation with several detailed examples.  We also show how to use coherent family technology to establish the non-semisimplicity of category $\categ{O}$ in one of these examples.
\end{abstract}

\maketitle

\onehalfspacing

\section{Introduction} \label{sec:intro}

\subsection{Aims}
The representation theory of the \svoa{} underlying a given \cft{} is traditionally assumed to have a \hw{} flavour, especially when the theory in question is rational.  However, there is a generalisation that is playing an increasingly important role in studying non-rational examples, namely the \emph{relaxed} \hwms{}.  These were originally named in \cite{FeiEqu98} where such modules over the simple (admissible-level) affine \voa{} $\slvoa{\kk}{2}$ were used to study the well-known Kazama-Suzuki correspondence \cite{KazCha89} with the $N=2$ superconformal \svoas{}.

The idea behind the appellation ``relaxed'' comes from relaxing the definition of a \hwv{} so that it no longer needs to be annihilated by the positive root vectors of the horizontal subalgebra.  A relaxed \hwm{} is then just a module that is generated by a relaxed \hwv{}.  This idea can be applied to quite general classes of \svoas{} \cite{RidRel15} and so relaxed \hwms{} are potentially important ingredients of a wide variety of \cfts{}.

Interestingly, the simple relaxed \hw{} $\slvoa{\kk}{2}$-modules were actually classified in \cite{AdaVer95}, several years before their naming in \cite{FeiEqu98}.  They have since been proposed as the main building blocks of the $\SLG{SL}{2}(\RR)$ \wzw{} models \cite{MalStr01}, found to arise naturally in the fusion rules of $\slvoa{-4/3}{2}$ and $\slvoa{-1/2}{2}$ \cite{GabFus01,RidFus10}, and used to analyse the representation theory of the admissible-level $\SLA{sl}{2}$-parafermion theories \cite{AdaCon05,RidSL210,CreCos13,AugMod17}.  Moreover, relaxed \hwms{} have recently been shown to play a central role in \cfts{} based on the \svoas{} $\slvoa{\kk}{3}$ \cite{AdaRea16,AraWei16,KawAdm19}, $\affvoa{\kk}{\SLSA{osp}{1}{2}}$ \cite{RidAdm17,AdaRea17,CreCos18} and $\affvoa{\kk}{\SLSA{sl}{2}{1}}$ \cite{CreAdm19}.

One of the many reasons to study relaxed \hwms{} is the belief that such modules are necessary to construct consistent affine \cfts{} at non-rational levels.  Indeed, it has been observed in several examples \cite{CreMod12,CreMod13,RidBos14,RidAdm17,CreCos18,KawAdm19} that the characters of the representations of a \svoa{} need not carry a representation of the modular group unless one includes relaxed modules (and their twists by spectral flow automorphisms \cite{SchCom87,RidSL208}).  Further, this inclusion even allows one, in these cases, to compute the Grothendieck fusion coefficients using a (conjectural) Verlinde formula \cite{CreLog13,RidVer14}.

From the point of view of this article, however, the most compelling reason to study relaxed \hwms{} is the fact that they form the largest class of weight modules to which Zhu's powerful classification methods \cite{ZhuMod96} may be applied.  More precisely, the simple relaxed \hwms{} are the simple objects of a relaxed category $\categ{R}$, see \cite{RidRel15,KawRel18} for the definition, that naturally generalises the well-known \bgg{} category $\categ{O}$.  The point is that this is the largest category of weight modules on which Zhu's functor $\zhu{\blank}$ (introduced below) has zero kernel.

Our aim here is to provide the means to classify the simple relaxed \hwms{}, with \fdim{} weight spaces, over an \emph{arbitrary} affine \voa{}.  The restriction to \fdim{} weight spaces is motivated physically by the need to have well-defined characters, in particular so that the modular invariance of the partition function of the \cft{} can be verified.  Actually, the method works for critical levels as well where one also expects relaxed modules, see \cite{AdaLie07} for the $\SLA{sl}{2}$ case.  To the best of our knowledge, relaxed classifications are currently only known for $\slvoa{\kk}{2}$ \cite{AdaVer95,RidRel15}, $\affvoa{\kk}{\SLSA{osp}{1}{2}}$ \cite{RidAdm17,WooAdm18,CreCos18} and $\slvoa{\kk}{3}$ \cite{AraWei16}.  Our results make it easy to extend these classifications to higher-rank $\ag$, at least when the level $\kk$ is admissible \cite{KacMod88}, thanks to the celebrated \hw{} classification of Arakawa \cite{AraRat16}.

\subsection{Zhu technology} \label{sec:zhu}

Throughout this work, the underlying field is always implicitly assumed to be the complex numbers $\CC$.  Let $\ag$ be a \fdim{} simple Lie algebra.  Recall that the Zhu algebra \cite{ZhuMod96} of a level-$\kk$ affine vertex algebra $\vkg$ is isomorphic \cite{FreVer92} to
\begin{equation}
	\aaz_{\kk} = \frac{\envalg{\ag}}{\aai_{\kk}},
\end{equation}
where $\aai_{\kk}$ is some two-sided ideal of $\envalg{\ag}$.  If $\vkg$ is universal, then $\aai_{\kk} = 0$.  If $\vkg$ is not universal, then $\aai_{\kk}$ is non-zero if and only if $\kk$ is critical, meaning that $\kk=-\dcox$, or $\kk$ satisfies \cite{GorSim07}
\begin{equation} \label{eq:simplevacuum}
	\ell (\kk + \dcox) = \frac{u}{v}, \qquad \text{for some}\ u \in \ZZ_{\ge 2}\ \text{and}\ v \in \ZZ_{\ge 1}\ \text{with}\ \gcd \set{u,v} = 1.
\end{equation}
Here, $\ell$ is the lacing number of $\ag$:  $\ell = 1$ for types \type{A}, \type{D} and \type{E}; $\ell = 2$ for types \type{B}, \type{C} and \type{F}; $\ell = 3$ for type \type{G}.

The representation theories of a vertex superalgebra and its Zhu algebra are related \cite{ZhuMod96} by a functor $\zhu{\blank}$.  For an affine vertex algebra $\vkg$, this functor maps the category of relaxed \hw{} $\vkg$-modules to the category of weight $\aaz_{\kk}$-modules.  If we recall \cite{FreVer92} that any $\vkg$-module is naturally a module over the untwisted affine Kac-Moody algebra $\kmg = \ag[t,t^{-1}] \oplus \CC K$ (on which the central element $K$ acts as multiplication by $\kk$), then this functor has the form $\zhu{\mdm} = \mdm^{t \ag[t]}$ (the elements of $\mdm$ that are annihilated by $t \ag[t]$).

There is likewise a functor $\ind{\blank}$ from the category of weight $\aaz_{\kk}$-modules to the category of relaxed \hw{} $\vkg$-modules, obtained by ``inducing'' and then quotienting by the maximal submodule whose intersection with the original module is zero.  We refer to \cite[Sec.~2.2]{ZhuMod96} and \cite[Sec.~3.2]{LiRep94} for a precise definition of what ``inducing'' means in this context.  Using these two functors, Zhu proved the following celebrated result (actually in much greater generality).
\begin{theorem}[Zhu \protect{\cite[Thms.~2.2.1 and 2.2.2]{ZhuMod96}}] \label{thm:zhu}
	\leavevmode
	\begin{enumerate}
		\item A relaxed \hw{} $\vkg$-module $\mdl$ is simple if and only if $\zhu{\mdl}$ is a simple weight $\aaz_{\kk}$-module. \label{it:zhusimp}
		\item More generally, any $\aaz_{\kk}$-module $\mdm$ yields an $\NN$-graded $\vkg$-module $\ind{\mdm}$ such that $\zhu{\ind{\mdm}} \cong \mdm$ and $0$ is the only submodule of $\ind{\mdm}$ whose intersection with the ``top space'' $\mdm$ is zero. \label{it:zhuindec}
	\end{enumerate}
\end{theorem}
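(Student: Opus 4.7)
The plan is to establish (b) first, since part (a) follows essentially formally from the explicit construction of $\ind{\blank}$. To construct $\ind{\mdm}$, first use the surjection $\envalg{\ag} \twoheadrightarrow \aaz_{\kk}$ to view the $\aaz_{\kk}$-module $\mdm$ as an $\ag$-module, and extend this to an action of the parabolic subalgebra $\ag[t] \oplus \CC K$ by letting $t\ag[t]$ act trivially and $K$ act as multiplication by $\kk$. Then form the generalised Verma $\kmg$-module
\begin{equation*}
  \widehat{\mdm} := \envalg{\kmg} \otimes_{\envalg{\ag[t] \oplus \CC K}} \mdm,
\end{equation*}
which is an $\NN$-graded, level-$\kk$ module with degree-zero piece $\mdm$ and is therefore automatically a module over the universal affine \voa{} $\uaffvoa{\kk}{\ag}$. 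Let $\mdn \subseteq \widehat{\mdm}$ denote the sum of all submodules meeting $\mdm$ trivially; this is itself such a submodule and hence the unique maximal one. Set $\ind{\mdm} := \widehat{\mdm}/\mdn$.

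The crux of (b) is then to upgrade $\ind{\mdm}$ from a $\uaffvoa{\kk}{\ag}$-module to a $\vkg$-module. Denoting by $\Mod{J} \subseteq \uaffvoa{\kk}{\ag}$ the maximal \voa{} ideal (whose quotient is $\vkg$), the $\kmg$-submodule $\Mod{J} \cdot \widehat{\mdm}$ meets the top space in exactly the image of the zero-modes of $\Mod{J}$ acting on $\mdm$; by the Frenkel--Zhu identification of the Zhu algebra \cite{FreVer92}, these zero-modes lie in $\aai_{\kk}$, which annihilates $\mdm$ because $\mdm$ is an $\aaz_{\kk}$-module. Thus $\Mod{J} \cdot \widehat{\mdm} \subseteq \mdn$ and $\Mod{J}$ acts as zero on $\ind{\mdm}$, making it a genuine $\vkg$-module. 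The isomorphism $\zhu{\ind{\mdm}} \cong \mdm$ and the stated uniqueness property then follow directly from the construction.

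For (a), let $\mdm := \zhu{\mdl}$. Since $\mdl$ is generated by its top space $\mdm$, there is a canonical surjection $\widehat{\mdm} \twoheadrightarrow \mdl$ whose kernel meets $\mdm$ trivially and hence is contained in $\mdn$; consequently $\mdl$ itself surjects onto $\ind{\mdm}$. If $\mdl$ is simple, this surjection must be an isomorphism (as $\ind{\mdm} \neq 0$), so $\mdl \cong \ind{\mdm}$, and any nonzero proper $\aaz_{\kk}$-submodule $\mdm' \subsetneq \mdm$ would generate a proper nonzero submodule $\envalg{\kmg} \cdot \mdm' \subsetneq \ind{\mdm}$ whose top space is $\mdm' \neq \mdm$, contradicting simplicity; so $\mdm$ is simple. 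Conversely, if $\mdm$ is simple, the maximality property in (b) forces every nonzero submodule of $\ind{\mdm}$ to intersect $\mdm$ nontrivially and hence to contain all of $\mdm$, so $\ind{\mdm}$ is simple, and the identification $\mdl \cong \ind{\mdm}$ above supplies the converse. The principal obstacle throughout is the upgrade in (b) from $\uaffvoa{\kk}{\ag}$- to $\vkg$-module structure, which rests entirely on the Frenkel--Zhu compatibility between $\Mod{J}$ and $\aai_{\kk}$; everything else is essentially formal.
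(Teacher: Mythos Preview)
The paper does not itself prove this theorem: it is quoted from Zhu with a reference to \cite{ZhuMod96} (and to \cite{LiRep94} for the induction construction), so there is no ``paper's proof'' to compare against.  Your outline is the standard one and part~(b) is fine as a sketch: the generalised Verma module, the maximal submodule missing the top space, and the Frenkel--Zhu identification of $\aai_{\kk}$ with the image of the zero modes of the maximal ideal $\Mod{J}$ are exactly the ingredients Zhu and Li use.

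There is, however, a small circularity in your argument for the converse in~(a).  You establish the identification $\mdl \cong \ind{\mdm}$ only under the hypothesis that $\mdl$ is simple, and then invoke that same identification to deduce simplicity of $\mdl$ from simplicity of $\mdm$.  From the surjection $\mdl \twoheadrightarrow \ind{\mdm}$ alone you cannot conclude that $\mdl$ is simple just because $\ind{\mdm}$ is.  The repair is short but uses a point you glossed over: in general $\zhu{\mdl} = \mdl^{t\ag[t]}$ can be strictly larger than the degree-$0$ part $\mdl_0$.  If $\mdm = \zhu{\mdl}$ is simple, then because $\ag$ preserves the $\NN$-grading, $\mdm$ is concentrated in a single degree; since $\mdl_0 \subseteq \mdm$ and $\mdl_0 \neq 0$, this forces $\mdm = \mdl_0$.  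Now any nonzero submodule $\mdl' \subseteq \mdl$ has a nonzero lowest-degree component, which is annihilated by $t\ag[t]$ and therefore lies in $\mdm = \mdl_0$; simplicity of $\mdm$ then gives $\mdm \subseteq \mdl'$, and since $\mdm$ generates $\mdl$ we get $\mdl' = \mdl$.  With this adjustment your argument goes through.
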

\noindent To classify the simple relaxed \hwms{} of the affine vertex algebra $\vkg$, it therefore suffices to classify the simple weight modules of $\ag$ that are annihilated by the Zhu ideal $\aai_{\kk}$.

If $\aai_{\kk} = 0$, which occurs when $\vkg$ is universal, our task is then to classify all the weight modules of $\ag$.  This is quite ambitious and has in fact only been completed for $\ag = \SLA{sl}{2}$ (see \cite{MazLec10} for a textbook treatment).  However, as noted above, we actually want to restrict to weight modules with \fdim{} weight spaces.  Then, we are in better shape because this class of $\ag$-modules was classified, for all finite-dimensional simple Lie algebras $\ag$, by Mathieu \cite{MatCla00} (building on work of Fernando \cite{FerLie90}).  For this purpose, Mathieu introduced highly reducible $\ag$-modules called coherent families whose properties reduced the classification problem to the classification of \hw{} $\ag$-modules satisfying certain easily analysed conditions.

In this paper, we are interested in the case in which $\aai_{\kk} \neq 0$.  We will therefore extend Mathieu's result to a classification of all simple weight $\aaz$-modules with \fdim{} weight spaces, where $\aaz$ is the quotient of $\envalg{\ag}$ by an arbitrary two-sided ideal $\aai$.  More precisely, we use Mathieu's theory of coherent families to reduce this classification to a classification problem involving \hw{} $\aaz$-modules.  In particular, if the classification of simple \hw{} $\aaz$-modules is already known, then our results allow one to algorithmically classify all the simple weight $\aaz$-modules with \fdim{} weight spaces (see \cref{sec:algorithm} and the examples detailed in \cref{sec:ex}).  Specialising $\aaz$ to the Zhu algebra $\aaz_{\kk}$ of $\vkg$ and applying \cref{thm:zhu}\ref{it:zhusimp}, we then recover the relaxed \hw{} classification that we are interested in here.

\subsection{Results} \label{sec:results}

In this \lcnamecref{sec:results}, we present our results in the context of classifying certain types of relaxed \hw{} $\vkg$-modules with finite-dimensional weight spaces.  As mentioned above, these results actually hold for ideals more general than the Zhu ideals $\aai_{\kk}$ and are stated as such in the rest of the paper.

Before stating our main theorems, we shall need to introduce some definitions.  First, we generalise Mathieu's notion \cite{MatCla00} of a coherent family of $\ag$-modules to families of $\al$-modules, where $\al$ is an arbitrary \fdim{} reductive Lie algebra.  Fixing a Cartan subalgebra $\ah$ of $\al$, we let $\as = \comm{\al}{\al}$ and $\ah_{\as} = \ah \cap \as$.  Then, a \emph{coherent family} of $\al$-modules is a weight $\al$-module satisfying the following three properties: its (weight) support is a single coset $\zeta + \ah_{\as}^*$ (for some $\zeta \in \ah^*$); its non-zero weight spaces all have the same dimension; any element of the centraliser of $\ah$ in $\envalg{\al}$ defines a polynomial function on the support given by the trace of the element's action on each weight space.  We refer to \cref{def:coh} below for further discussion.

The reason why we need this minor generalisation of coherent families is that we require a further generalisation that also accounts for Fernando's work \cite{FerLie90}.  For this, we consider parabolic subalgebras $\ap \subseteq \ag$ and take $\al$ to be the corresponding Levi factor.  Parabolic induction then defines a functor that maps a weight $\al$-module to a weight $\ag$-module, canonically embedding the former in the latter.  We define the \emph{almost-simple quotient} of a parabolically induced module to be the quotient by the sum of all the submodules that have zero intersection with the image of this embedding.

With this, we can finally define the promised generalisation of coherent families: a \emph{parabolic family} of $\ag$-modules is the almost-simple quotient of the parabolic induction of some coherent family of $\al$-modules, see \cref{def:par}.  One useful property of a coherent family $\mdc$ of $\al$-modules is that it always contains an \emph{infinite-dimensional \hw} submodule $\mdh$ \cite{MatCla00}.  The almost simple quotient of the parabolic induction of $\mdh$ is then an infinite-dimensional \hw{} $\ag$-submodule of the parabolic family induced from $\mdc$.  We shall refer to the \hw{} $\ag$-modules obtained in this fashion as being \emph{$\al$-bounded}, referring to \cref{def:lbound} below for further details.  Note that not every infinite-dimensional \hw{} submodule of a parabolic family is automatically $\al$-bounded.

We can now present our first main theorem.  Recall that $\ag$ denotes a \fdim{} simple Lie algebra, $\kmg$ its untwisted affinisation and $\vkg$ one of the corresponding affine vertex algebras of level $\kk \in \CC$.
\begin{mtheorem} \label{thm:zhuclass}
	Suppose that $\mdl$ is a simple level-$\kk$ relaxed \hw{} $\kmg$-module, with \fdim{} weight spaces, that is not \hw{} with respect to any Borel subalgebra.  Then, $\mdl$ is a $\vkg$-module if and only if $\zhu{\mdl}$ is a submodule of an irreducible semisimple parabolic family $\mdp$ of $\ag$-modules that has a simple \emph{$\al$-bounded \hw} submodule $\mdh$ whose Zhu-induction $\ind{\mdh}$ is a $\vkg$-module.  Here, $\al$ denotes the Levi factor of the parabolic subalgebra associated with $\mdp$.
\end{mtheorem}
\noindent The notion of irreducibility and semisimplicity for parabolic families is defined in \cref{sec:para}, see \cref{eq:decompP}.

This result follows immediately by combining \cref{thm:zhu}\ref{it:zhusimp} with \cref{thm:classification} below.  What it means is that if one is able to classify the simple \hw{} $\vkg$-modules and understand the \hw{} submodules of every parabolic family of $\kmg$-modules, then one can deduce the classification of the simple \emph{relaxed} \hw{} $\vkg$-modules.  We shall see how this works with a series of examples in \cref{sec:ex}.

Our second main theorem extends the first to cover certain types of non-simple, but indecomposable, relaxed \hw{} $\vkg$-modules.  Given a root $\alpha$ of $\ag$, we say that a $\ag$-module $\mdm$ is \emph{$\alpha$-bijective} if the corresponding root vector acts bijectively.
\begin{mtheorem} \label{thm:zhuind}
	Let $\ap \subseteq \ag$ be a parabolic subalgebra of $\ag$ with a non-abelian Levi factor $\al$ and let $\mdc$ be an irreducible $\alpha$-bijective coherent family of $\al$-modules, for some root $\alpha$ of $\al$.  Let $\mdp$ denote the parabolic family of $\ag$-modules induced from $\mdc$ and let $\mdh$ be a simple \emph{$\al$-bounded \hw} submodule of $\mdp$.  Then, if $\ind{\mdh}$ is a $\vkg$-module, then so is every subquotient of $\ind{\mdp}$.
\end{mtheorem}
\noindent This result follows from \cref{thm:zhu}\ref{it:zhuindec} and \cref{thm:parabolicindecomposable}. The condition of $\alpha$-bijectivity ensures that $\mdp$ is not semisimple, hence that it has reducible but indecomposable subquotients from which we obtain reducible but indecomposable $\vkg$-modules by Zhu-induction.  Of course, identifying these indecomposable $\vkg$-modules may be quite difficult in practice.  In \cref{sec:catO}, we consider an illustrative application that features a non-semisimple parabolic family of $\SLA{so}{8}$-modules.

We mention that the motivation for wanting to construct such non-simple indecomposable relaxed \hw{} $\vkg$-modules stems from the observation \cite{SalGL106,RidFus10} that such modules seem to be building blocks for constructing projective covers (in a category that naturally extends the relaxed category $\categ{R}$ by spectral flow).  These projective covers are, in turn, believed to be the natural building blocks of the state space of the \cft{} \cite{SalGL106,CreLog13}.  Unfortunately, these covers are currently not even known to exist for any non-rational affine theory, though conjectural structures for $\slvoa{\kk}{2}$ and $\affvoa{\kk}{\SLSA{osp}{1}{2}}$ may be found in \cite{CreCos18,CreUni19,LiuCos19}.

\subsection{Outline}

We start by recalling Mathieu's definition of a coherent family of $\ag$-modules in \cref{sec:coh} and by immediately generalising it to coherent families of modules over a reductive Lie algebra $\al$.  This \lcnamecref{sec:coh} also introduces some convenient definitions and summarises some of the important results of Fernando and Mathieu that are needed in what follows.  \cref{sec:para} then introduces a new notion, which we call a parabolic family of $\ag$-modules, and formalises the relationship between parabolic and coherent families in terms of restriction- and induction-type functors.

The classification work begins in \cref{sec:class}.  For a quotient $\aaz$ of $\envalg{\ag}$ by an arbitrary ideal, we identify the simple weight $\aaz$-modules, with \fdim{} weight spaces, as simple submodules of certain semisimple parabolic families of $\ag$-modules (\cref{thm:classification}).  This proves \cref{thm:zhuclass}.  The extension to $\alpha$-bijective indecomposable modules, needed for \cref{thm:zhuind}, is then proven in \cref{sec:indec} (\cref{thm:parabolicindecomposable}), now using non-semisimple parabolic families.

Having proven these classification theorems, we next turn to the question of how to efficiently analyse the combinatorics of parabolic families so as to be able to exploit existing \hw{} classification results.  For this, we first summarise Mathieu's explicit classification of coherent families in \cref{sec:ac}.  Interestingly, it turns out that coherent families are usually, but not always, completely distinguished by their central characters.  \cref{sec:refine} then describes when two \hwms{} appear as submodules of the same coherent/parabolic family and discusses how the Weyl group acts on parabolic families.

This material is combined with \cref{thm:classification} in \cref{sec:algorithm} and the result is summarised in terms of an algorithm for classifying simple weight $\aaz$-modules with \fdim{} weight spaces.  In \cref{sec:ex}, we use this algorithm to classify the simple relaxed \hwms{} of some interesting examples, taking $\aaz$ to be the Zhu algebra $\aaz_{\kk}$ of a simple affine \voa{} $\lkg$.  Specifically, we address the admissible-level cases $\slvoa{-3/2}{3}$, $\spvoa{-1/2}{4}$ and $\affvoa{-5/3}{\ag_2}$ as well as the non-admissible-level case $\sovoa{-2}{8}$.  We hope that these illustrations will provide the reader with a taste of the utility of our results.

Finally, we give an application of the utility of \cref{thm:parabolicindecomposable} in \cref{sec:catO}.  Specifically, we use it to show that the simple affine \voa{} $\sovoa{-2}{8}$ not only admits non-semisimple relaxed \hwms, but it in fact also admits non-semisimple \hwms.  We believe that this is the first demonstration of non-semisimplicity in category $\categ{O}$ for a \emph{quasilisse} \cite{AraQua16} affine \voa.

In the future, we intend to explore more families of higher-rank classifications in order to better understand the general features of relaxed \hwms{}.  We also intend to generalise the methodology developed here to affine vertex superalgebras and the associated W-algebras and superalgebras.  Note that there are many interesting cases \cite{AdaCla03,AdaLog07,AdaN=109,CreRel11,CreWAl11,BabTak12,CreFal13,RidMod13,MorKac15,CanFusI15,CanFusII15,SatEqu16,AugMod17,CreUni19} in which a vertex algebra possesses continuously parametrised ``coherent'' families consisting of \emph{highest-weight} modules.  We also hope to generalise our treatment of weight modules so as to study these cases.  The next instalment of this series \cite{KawRel20} will address the important problem of computing the character of more general relaxed \hwms{}, thus generalising the rank-$1$ results of \cite{KawRel18}.

\section*{Acknowledgements}

We thank Dra\v{z}en Adamovi\'{c}, Tomoyuki Arakawa, Thomas Creutzig, Terry Gannon, Kenji Iohara, Masoud Kamgarpour, Olivier Mathieu, Walter Mazorchuk, Jethro van Ekeren and Simon Wood for useful discussions relating to the material presented here and for their encouragement.
We likewise thank the reviewer for their careful and very useful report.
KK's research is partially supported by the Australian Research Council Discovery Project DP160101520, JSPS KAKENHI Grant Number 19J01093 and  19KK0065 and MEXT Japan ``Leading Initiative for Excellent Young Researchers (LEADER)''.
DR's research is supported by the Australian Research Council Discovery Project DP160101520 and the Australian Research Council Centre of Excellence for Mathematical and Statistical Frontiers CE140100049.

\section{Coherent families} \label{sec:coh}

In \cite{MatCla00}, Olivier Mathieu introduced the notion of a coherent family as a fundamental tool for completing the classification of simple weight modules with \fdim{} weight spaces over a \fdim{} simple Lie algebra $\ag$.  Fix a Cartan subalgebra $\ah \subseteq \ag$.  We let $\supp{\mdm} \subseteq \ah^*$ denote the \emph{support} (the set of weights) of a module $\mdm$ and write $\mdm(\mu)$ for the weight space of $\mdm$ corresponding to the weight $\mu \in \ah^*$.  Let $\cent{\ah}{\ag}$ denote the centraliser of $\ah$ in the \uea{} $\envalg{\ag}$.  Mathieu's definition is then as follows.
\begin{definition}
	Let $\ag$ be a \fdim{} simple Lie algebra.  A \emph{coherent family} of $\ag$-modules is a weight $\ag$-module $\mdc$ for which:
	\begin{itemize}
		\item There exists $d \in \ZZ_{>0}$, called the \emph{degree} of $\mdc$, such that $\dim \mdc(\mu) = d$ for all $\mu \in \ah^*$.
		\item Given any $U \in \cent{\ah}{\ag}$, the function taking $\mu \in \ah^*$ to $\tr_{\mdc(\mu)} U$ is polynomial in $\mu$.
	\end{itemize}
\end{definition}
\noindent In particular, the support of a coherent family is all of $\ah^*$.

We shall need analogues of these families for certain \fdim{} \emph{reductive} Lie algebras $\al$, each also coming with a fixed Cartan subalgebra $\ah \subseteq \al$.  We let $\as = \comm{\al}{\al}$ denote the derived subalgebra of $\al$ and choose a Cartan subalgebra of $\as$ to be $\ahs = \ah \cap \as$.  We then have $\al = \as \oplus \az$ and $\ah = \ahs \oplus \az$, where $\az$ is the centre of $\al$.
\begin{definition} \label{def:coh}
	Let $\al$ be a \fdim{} reductive Lie algebra.  A \emph{coherent family} of $\al$-modules is a weight $\al$-module $\mdc$ for which:
	\begin{itemize}
		\item $\supp{\mdc} = \zeta + \ahs^*$, for some $\zeta \in \ah^*$.
		\item There exists $d \in \ZZ_{>0}$, called the \emph{degree} of $\mdc$, such that $\dim \mdc(\mu) = d$ for all $\mu \in \supp{\mdc}$.
		\item Given any $U \in \cent{\ah}{\al}$, the function taking $\mu \in \supp{\mdc}$ to $\tr_{\mdc(\mu)} U$ is polynomial in $\mu$.
	\end{itemize}
\end{definition}
\noindent This reduces to Mathieu's definition when $\al$ is simple.

This reduction of the support from $\ah^*$ to $\zeta + \ahs^*$ is motivated by the idea that a given polynomial action on a suitable \emph{\infdim{}} submodule automatically determines the action on the entire coherent family.  As we shall see, the simple ideals of $\al$ may have \infdim{} submodules that can be used for such purposes, while the abelian ideal $\az$ of course does not.

A coherent family $\mdc$ of $\al$-modules is therefore highly reducible in general, decomposing as
\begin{equation} \label{eq:decompC}
	\mdc \cong \bigoplus_{\lambda \in \supp{\mdc} / \rlat_{\al}} \mdc_{\lambda},
\end{equation}
where $\rlat_{\al}$ denotes the root lattice of $\al$ (which coincides with that of $\as$).  If at least one of the $\mdc_{\lambda}$ is simple, then the coherent family $\mdc$ is said to be \emph{irreducible}.  Likewise, $\mdc$ is called a \emph{semisimple} coherent family if all of the $\mdc_{\lambda}$ are semisimple $\al$-modules.

We note the special case in which $\al$ is abelian, hence $\az = \ah = \al$, $\as = \ahs = 0$ and $\cent{\ah}{\al} = \envalg{\az}$.  Then, the support of a coherent family $\mdc$ of $\al$-modules is a singleton $\set{\zeta}$.  It follows that $\mdc \cong \mdc_{\zeta}$ is a (possibly non-semisimple) extension of $d$ copies of the simple $\az$-module of weight $\zeta$.  It is clear that the trace of the action of each $U \in \envalg{\az}$ amounts to multiplication by $d \zeta(U)$, where $d$ is the degree of $\mdc$.  This is clearly polynomial in $\zeta$.  If $\mdc$ is irreducible, then it is automatically semisimple with degree $d=1$.  Indeed, in this case, $\mdc$ is actually simple as a $\az$-module.

A somewhat less trivial example is $\al = \SLA{sl}{2}$ for which $\cent{\ah}{\al}$ is the polynomial ring generated by $\ah$ and the centre $\cent{\al}{\al}$, the latter being polynomials in the quadratic Casimir $\cas$.  The classification of simple weight modules (with \fdim{} weight spaces) is therefore elementary, see \cite[Thm.~3.32]{MazLec10} for example.  Indeed, a simple weight module is either \hw{}, \lw{}, or dense, where we recall that a weight $\al$-module $\mdn$ is said to be \emph{dense} if $\supp{\mdn} = \lambda + \rlat_{\al}$, for some $\lambda \in \ah^*$.  The summands $\mdc_{\lambda}$ of an irreducible semisimple coherent family $\mdc$ over $\SLA{sl}{2}$ are thus either direct sums of simple highest- and \lwms{} or are simple and dense.  Moreover, the latter case is generic, occurring whenever there are no $\mu \in \lambda + \rlat_{\al}$ satisfying the \hw{} condition relating $\mu$ to the eigenvalue of $\cas$.  Note that $\cas$ acts as a constant on each simple summand of $\mdc$, by Schur's lemma, hence it must act as a constant on all of $\mdc$ in order to act polynomially.

We consider one last example: $\al = \SLA{gl}{2}$, for which we have $\az \cong \SLA{gl}{1}$ and $\as \cong \SLA{sl}{2}$.  A simple weight $\al$-module is therefore a \hw{}, \lw{} or dense $\as$-module tensored by a one-dimensional $\az$-module.  Our definition for an irreducible degree-$d$ coherent family $\mdc$ of $\SLA{gl}{2}$-modules is now seen to reduce to the tensor product of an irreducible degree-$d$ coherent family of $\SLA{sl}{2}$-modules with a fixed simple $\SLA{gl}{1}$-module.  Indeed, if $\supp{\mdc} = \zeta + \ahs^*$, then one may choose $\zeta \in \az^* \subset \ah^*$ to be the unique weight of the fixed $\SLA{gl}{1}$-module.

The picture for irreducible $\SLA{sl}{2}$ (and $\SLA{gl}{2}$) coherent families $\mdc$ is then that they decompose into direct summands $\mdc_{\lambda}$ that are simple and dense for all but a finite number of $\lambda \in \supp{\mdc} / \rlat_{\al}$.  The non-simple summands have highest- and \lw{} composition factors that share their central character ($\cas$-eigenvalue) with the simple summands.  Unfortunately, this picture only generalises partially to higher ranks.  We prepare some convenient terminology.
\begin{definition}
	A \fdim{} reductive Lie algebra is said to be \emph{of \type{AC}-type} if its simple ideals are all of types \type{A} and \type{C}.
\end{definition}
\noindent We recall that the \emph{type} of a \fdim{} simple Lie algebra refers to the name given to its Dynkin diagram.  Thus, $\SLA{sl}{n}$ is of type \type{A} while $\SLA{sp}{2n}$ is of type \type{C}, for all $n \in \ZZ_{\ge2}$.  For our purposes, it is convenient to regard $\SLA{sl}{2} \cong \SLA{sp}{2}$ as being of type \type{A} only (see \cref{sec:ac}).
\begin{proposition}[Fernando \protect{\cite[Thm.~5.2 and Rem.~5.4]{FerLie90}}] \label{prop:FernandoAC}
	A \fdim{} reductive Lie algebra $\al$ admits a simple dense module if and only if it is of \type{AC}-type.
\end{proposition}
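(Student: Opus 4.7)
My plan is to reduce to the case of a finite-dimensional simple Lie algebra and then to argue each direction separately, with explicit constructions for type \type{AC} and a structural obstruction for the remaining types.

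First I would exploit the reductive decomposition $\al = \as \oplus \az$. Since $\az$ acts by scalars on any simple $\al$-module, a simple $\al$-module $\mdm$ is isomorphic to $\mdn \otimes \CC_{\zeta}$, where $\mdn$ is a simple $\as$-module and $\CC_{\zeta}$ is a one-dimensional $\az$-module. Because $\rlat_{\al} = \rlat_{\as}$, the module $\mdm$ is dense if and only if $\mdn$ is dense. Next, writing $\as = \bigoplus_{i} \as_{i}$ as a sum of simple ideals, a simple $\as$-module is an external tensor product $\mdn = \boxtimes_{i} \mdn_{i}$ of simple $\as_{i}$-modules, and its support is a product of supports. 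Hence $\mdn$ is dense over $\as$ if and only if each $\mdn_{i}$ is dense over $\as_{i}$. This reduces the proposition to the assertion that a \fdim{} simple Lie algebra $\ag$ admits a simple dense module if and only if it is of type \type{A} or \type{C}.

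For the sufficiency direction, I would exhibit simple dense modules. In type \type{A}, the standard construction is Mathieu's twisted localisation: start with a \fdim{} simple $\SLA{sl}{n}$-module, localise at a suitable root vector $e_{\sroot{}}$ that acts injectively on the infinite-dimensional module obtained by $\sroot{}$-bijective extension, and then apply a Zuckerman-type complex twist to land in the dense region of $\ah^{*}$. In type \type{C}, one can realise $\SLA{sp}{2n}$ inside the Weyl algebra and localise the resulting oscillator (or Shale--Weil) representation analogously, again obtaining modules on which every root vector acts bijectively, hence with support a single $\rlat_{\ag}$-coset. One then verifies that a generic choice of parameters yields a simple module whose support equals $\lambda + \rlat_{\ag}$.

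The harder direction is necessity: if $\ag$ is of type \type{B}, \type{D}, \type{E}, \type{F} or \type{G}, then no simple dense $\ag$-module exists. The approach is Fernando's: for a simple weight module $\mdm$, define
\begin{equation}
	\rootsetsymb^{+}(\mdm) = \set{\sroot{} \in \roots{\ag} \st e_{\sroot{}} \text{ acts locally nilpotently on } \mdm}.
\end{equation}
One shows that $\rootsetsymb^{+}(\mdm)$ is a parabolic subset of $\roots{\ag}$, i.e. it arises as the set of roots of some parabolic subalgebra $\ap \subseteq \ag$. Density of $\mdm$ forces every root vector to act injectively, which in turn forces $\rootsetsymb^{+}(\mdm)$ to contain no root whose negative is not also in $\rootsetsymb^{+}(\mdm)$; combined with parabolicity this gives $\rootsetsymb^{+}(\mdm) = \emptyset$, i.e. every root vector acts bijectively. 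The main obstacle — and this is where the root system type intervenes — is to prove that bijectivity of every root action is impossible outside types \type{A} and \type{C}. The standard argument uses $\SLA{sl}{3}$ and $\SLA{so}{5}$ subalgebras: for any short root $\sroot{}$ in a non-\type{AC} type one finds a pair of roots $\sroot{}, \sroot{}'$ whose Chevalley relations, together with the finite-dimensionality of weight spaces and the quadratic Casimir action, contradict simultaneous bijectivity of $e_{\sroot{}}$ and $e_{\sroot{}'}$. Implementing this obstruction case-by-case on the Dynkin diagrams \type{B}, \type{D}, \type{E}, \type{F}, \type{G} (or uniformly via Fernando's ``parabolic monoid'' combinatorics and a Shapovalov-style calculation) completes the proof.
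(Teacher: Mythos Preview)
The paper does not prove this \lcnamecref{prop:FernandoAC}; it is stated with attribution to Fernando \cite[Thm.~5.2 and Rem.~5.4]{FerLie90} and used thereafter as a black box. There is thus nothing in the present paper to compare your argument against.

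That said, your sketch follows the broad outline of Fernando's original proof. The reduction to simple ideals via $\al = \as \oplus \az$ and $\as = \bigoplus_i \as_i$ is correct and standard. For sufficiency, the constructions you cite are valid, though twisted localisation is Mathieu's later technology; Fernando's own existence arguments for types \type{A} and \type{C} are more direct.

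The necessity direction is where your sketch has genuine gaps. First, the sentence ``Density of $\mdm$ forces every root vector to act injectively, which in turn forces $\rootsetsymb^{+}(\mdm)$ to contain no root whose negative is not also in $\rootsetsymb^{+}(\mdm)$; combined with parabolicity this gives $\rootsetsymb^{+}(\mdm) = \emptyset$'' is circular: if every root vector already acts injectively then $\rootsetsymb^{+}(\mdm) = \emptyset$ by definition and the parabolicity detour is redundant. More substantively, the implication ``dense $\Rightarrow$ torsion-free'' is itself a nontrivial theorem of Fernando, not an immediate consequence of the support condition, and you have assumed it rather than argued it. Second, your account of the obstruction for types \type{B}, \type{D}, \type{E}, \type{F}, \type{G}---invoking $\SLA{sl}{3}$ and $\SLA{so}{5}$ subalgebras, Chevalley relations, and ``a Shapovalov-style calculation''---is too vague to be a proof. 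Fernando's actual argument analyses the monoid generated by the injective roots and shows, via a case analysis on rank-two root subsystems, that this monoid can equal the full root lattice only in types \type{A} and \type{C}; this is precisely the step where the type restriction enters, and it requires more than you have indicated.
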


Despite this, coherent families provide the means to construct and understand simple weight modules with \fdim{} weight spaces, as we shall discuss below (see \cref{thm:fernando}).  First, we collect some useful definitions.
\begin{definition}
	\leavevmode
	\begin{itemize}
		\item A \emph{bounded} $\al$-module is an \infdim{} weight module for which there is a (finite) upper bound on the multiplicities (the dimensions of the weight spaces).  The maximal multiplicity is called the \emph{degree} of the $\al$-module.
		\item The \emph{essential support} $\suppess{\mdn}$ of a bounded $\al$-module $\mdn$ is the set of weights whose multiplicities are maximal.
	\end{itemize}
\end{definition}
\noindent We remark that Mathieu calls a weight module with uniformly bounded multiplicities \emph{admissible}.  We prefer not to use this terminology as it clashes, in our intended application, with a similar widely used terminology for certain affine vertex algebras and their modules \cite{KacMod88}.

We note that the simple weight $\SLA{sl}{2}$-modules (with \fdim{} weight spaces) are all bounded.  However, this situation is not typical: for example, a Verma module of a \fdim{} simple Lie algebra $\ag$ is bounded if and only if $\ag = \SLA{sl}{2}$.  On the other hand, a simple dense $\al$-module is \emph{torsion-free} \cite{FerLie90}, meaning that the root vectors of $\al$ act injectively, hence its (non-zero) multiplicities are constant.  Simple dense modules for $\al \neq \ah$ are thus always bounded.

We conclude this \lcnamecref{sec:coh} by quoting some fundamental results for coherent families, proofs for all of which may be found in Mathieu's article \cite{MatCla00}.  In fact, we present adaptations of Mathieu's results which apply to coherent families for \fdim{} reductive Lie algebras.  These adaptations are quite straightforward and follow immediately from the standard decomposition of a reductive Lie algebra into its simple and abelian ideals, as noted in \cite[Sec.~1]{MatCla00}.  The case where the Lie algebra is abelian is excluded for simplicity.
\begin{proposition}[Mathieu \cite{MatCla00}] \label{thm:mathieu1}
	Let $\al$ be a \fdim{} non-abelian reductive Lie algebra.  Then:
	\begin{enumerate}
		\item \label{it:zariski} [Prop.~3.5ii] The essential spectrum of a simple bounded $\al$-module is Zariski-dense in $\zeta + \ahs^*$, for some $\zeta \in \ah^*$.
		\item \label{it:cohextension} [Prop.~4.8i] Every simple bounded $\al$-module embeds into a unique irreducible semisimple coherent family.
		\item \label{it:degrees} [Prop.~4.8ii] Every \infdim{} submodule of an irreducible coherent family of degree $d$ is bounded and its degree is also $d$.
		\item \label{it:existence} [Lem.~5.3ii] Coherent families exist if and only if $\al$ is of \type{AC}-type (compare \cref{prop:FernandoAC}).
		\item \label{it:hwsubmod} [Prop.~5.7] Given an irreducible semisimple coherent family, there is a choice of Borel subalgebra for $\al$ such that the family contains a simple bounded \hwm{}.
	\end{enumerate}
\end{proposition}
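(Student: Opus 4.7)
My plan is to reduce each of the five statements to Mathieu's corresponding result for simple Lie algebras by exploiting the decomposition $\al = \az \oplus \as_1 \oplus \cdots \oplus \as_r$ of a reductive Lie algebra into its centre and simple ideals, together with the compatible Cartan decomposition $\ah = \az \oplus \ah_1 \oplus \cdots \oplus \ah_r$, where $\ah_i = \ah \cap \as_i$, so that $\ahs = \ah_1 \oplus \cdots \oplus \ah_r$. The first step is to observe that any simple weight $\al$-module $\mdn$ factorises, by Schur's lemma and the commutativity of distinct ideals, as $\mdn \cong \mdn_0 \otimes \mdn_1 \otimes \cdots \otimes \mdn_r$, where $\mdn_0$ is a one-dimensional $\az$-module of some weight $\zeta \in \az^*$ and each $\mdn_i$ is a simple weight $\as_i$-module; correspondingly, $\cent{\ah}{\al} \cong \envalg{\az} \otimes \bigotimes_i \cent{\ah_i}{\as_i}$, so that supports, multiplicities and traces all tensor-factorise along the ideals.

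With this decomposition in hand, the proofs of parts (a), (b) and (e) should transfer directly from \cite[Prop.~3.5ii, Prop.~4.8i, Prop.~5.7]{MatCla00}: the essential support of a tensor product is the sum of the essential supports of the factors, each of which is Zariski-dense in a coset of the corresponding $\ah_i^*$ by Mathieu; the unique irreducible semisimple coherent family extending $\mdn$ is built by tensoring $\mdn_0$ (viewed as its own trivial ``coherent family'' of degree one) with Mathieu's irreducible semisimple coherent extensions of each $\mdn_i$; and a Borel subalgebra $\ab = \az \oplus \ab_1 \oplus \cdots \oplus \ab_r$ for which the family contains a simple bounded \hwm{} is obtained by picking such a $\ab_i \subseteq \as_i$ for each simple ideal. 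For part (c), a submodule of an irreducible coherent $\al$-family tensor-factorises into submodules of the simple-ideal factors, and the degree of a tensor product is the product of degrees, so Mathieu's \cite[Prop.~4.8ii]{MatCla00} delivers the claim. Part (d) would then follow from the observation that the centre contributes trivially to the existence question: coherent families for $\al$ exist if and only if each $\as_i$ admits one, which by \cite[Lem.~5.3ii]{MatCla00} happens precisely when each $\as_i$ is of type \type{A} or type \type{C}.

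The main technical point, rather than a genuine obstacle, is the asymmetric treatment of $\az$ versus the $\as_i$ inside \cref{def:coh}: the centre contributes only the ``$\zeta$'' shift in the support $\zeta + \ahs^*$ and forces degree one and semisimplicity on its own tensor slot, whereas all the genuine coherent-family content lives on the simple ideals. One also has to check that the tensor factorisation respects weight-space decompositions: for $\mu \in \ah^*$ written uniquely as $\mu = \zeta' + \mu_1 + \cdots + \mu_r$ with $\zeta' \in \az^*$ and $\mu_i \in \ah_i^*$, the weight space $\mdn(\mu)$ equals $\mdn_0 \otimes \mdn_1(\mu_1) \otimes \cdots \otimes \mdn_r(\mu_r)$ precisely when $\zeta' = \zeta$. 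Once this bookkeeping is made explicit, Mathieu's original arguments go through without substantive modification, which is why the adaptation is ``quite straightforward''; the only genuine care point is that individual factors $\mdn_i$ may happen to be finite-dimensional, in which case they are treated as their own trivial ``coherent'' extensions of degree one.
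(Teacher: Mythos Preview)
Your approach is exactly what the paper does: it gives no detailed proof, only remarking that the adaptations ``are quite straightforward and follow immediately from the standard decomposition of a reductive Lie algebra into its simple and abelian ideals, as noted in \cite[Sec.~1]{MatCla00}''.  Your tensor-factorisation via $\al = \az \oplus \bigoplus_i \as_i$, together with the compatible factorisation of $\cent{\ah}{\al}$ and of weight spaces, is precisely this reduction with the bookkeeping spelled out.

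The one point that deserves a flag is your last sentence.  A finite-dimensional simple $\as_i$-module, for $\as_i$ simple, cannot be ``treated as its own trivial coherent extension of degree one'': its support is finite rather than all of $\ah_i^*$, and its degree need not be $1$.  In fact, if some tensor factor $\mdn_i$ is finite-dimensional, then part~\ref{it:zariski} fails outright as stated (the essential support lies inside a proper Zariski-closed subset of $\zeta + \ahs^*$), and for part~\ref{it:cohextension} one needs $\as_i$ to be of \type{AC}-type and must invoke Mathieu's genuine coherent extension of $\mdn_i$.  The paper glosses over this subtlety as well; in all of its applications the proposition is only invoked for simple bounded $\al$-modules that arise as submodules of coherent families, where every simple-ideal factor is automatically infinite-dimensional and bounded, so the issue does not arise.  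You should either note this implicit restriction or assume from the outset that each $\mdn_i$ is infinite-dimensional.
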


\section{Parabolic families} \label{sec:para}

Let $\ag$ be a \fdim{} simple Lie algebra and $\ap \subseteq \ag$ be a parabolic subalgebra.  We choose, once and for all, a Cartan subalgebra $\ah$ for $\ag$ and restrict the parabolics we consider to always contain $\ah$.  Let $\au$ denote the nilradical of $\ap$, $\al = \ap / \au$ its Levi factor and $\au^-$ the nilradical opposite to $\au$, so that $\ag = \au^- \oplus \al \oplus \au$ (as vector spaces).  We denote the derived subalgebra of $\al$ by $\as$ and let $\ahs = \ah \cap \as$.  Finally, let $\az$ be the centre of $\al$ so that $\al = \as \oplus \az$ and $\ah = \ahs \oplus \az$.

Given a choice of Borel, hence a set of simple/positive roots for $\ag$, the parabolics containing the Borel are in bijection with the subsets of the set of simple roots.  In particular, such a subset $S$ defines $\al$ and $\au$ as follows: $\al$ is spanned by $\ah$ and the root vectors whose roots are integer linear combinations of the elements of $S$, while $\au$ is spanned by all the remaining positive root vectors.  A useful consequence that we shall use several times is that the root lattice of $\al$ (and $\as$) has zero intersection with the monoid $\umonoid$ generated by the roots whose root vectors span $\au$.

If $\ap$ is a Borel subalgebra of $\ag$, then $\al = \az = \ah$, $\as = 0$ and $\au$ is the nilradical of the Borel.  This corresponds to taking $S = \varnothing$.  At the other extreme, taking $S$ to be the set of all simple roots corresponds to $\ap = \ag$, whence $\al = \as = \ag$ and $\au = 0$.  A useful motivating example is that of $\ag = \SLA{sl}{3}$ and $\abs{S} = 1$.  This leads to $6$-dimensional parabolics $\ap$ with $\al \cong \SLA{gl}{2}$, hence $\as \cong \SLA{sl}{2}$ and $\az \cong \SLA{gl}{1}$, while $\au$ is spanned by two commuting root vectors.

Given a parabolic $\ap \subseteq \ag$ as above, there are two important functors that relate weight modules over $\ag$ and $\al$.  First, there is the restriction $\uinv{\ap}$ that maps a weight $\ag$-module $\mdm$ to its subspace $\mdm^{\au}$ of vectors annihilated by $\au$.  As $\comm{\au}{\al} \subseteq \au$, $\uinv{\ap} \mdm = \mdm^{\au}$ is naturally an $\al$-module.  To introduce the second functor, recall that the \emph{parabolic induction} of a weight $\al$-module $\mdn$ is defined to be the $\ag$-module that results from letting $\au$ act as $0$ and then inducing as $\envalg{\ag} \otimes_{\envalg{p}} \mdn$.  If $\mdn$ is simple, then its parabolic induction will have a unique simple quotient.  In general, we define the \emph{almost-simple quotient} of the parabolic induction of a weight $\al$-module to be the $\ag$-module obtained by quotienting by the sum of all modules that have zero intersection with the subspace $\wun \otimes \mdn$.  We denote by $\pind{\ap}$ the functor on a weight $\al$-module that first parabolically induces to a $\ag$-module and then replaces the result by its almost-simple quotient.

\begin{proposition} \label{prop:functors}
	The functors $\pind{\ap}$ and $\uinv{\ap}$ satisfy the following properties:
	\begin{enumerate}
		\item \label{it:functlex} $\uinv{\ap}$ is inclusion-preserving and maps simple weight $\ag$-modules to simple weight $\al$-modules (or $0$).
		\item \label{it:functrex} $\pind{\ap}$ maps simple weight $\al$-modules to simple weight $\ag$-modules.
		\item \label{it:functinv} $\uinv{\ap} \pind{\ap} \mdn \cong \mdn$, for all weight $\al$-modules $\mdn$.
		\item \label{it:functsimp} If $\mdn$ is a simple weight $\al$-module that embeds in a weight $\al$-module $\mdn'$, then $\pind{\ap} \mdn$ embeds in $\pind{\ap} \mdn'$.
	\end{enumerate}
\end{proposition}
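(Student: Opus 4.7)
The plan is to prove (b) and (c) directly, and then deduce (a) and (d) from them. Throughout, fix a central element $h \in \az$ with $\alpha(h) > 0$ for every root $\alpha$ of $\au$, which induces a grading on the parabolic induction $V = \envalg{\ag} \otimes_{\envalg{\ap}} \mdn$ for which $\wun \otimes \mdn$ occupies the top slab; write $M \subseteq V$ for the sum of all submodules having trivial intersection with $\wun \otimes \mdn$, so that $\pind{\ap} \mdn = V/M$. Part (b) is immediate from the defining property of the almost-simple quotient: any nonzero submodule $W \subseteq \pind{\ap} \mdn$ must meet $\mdn$, simplicity of $\mdn$ then gives $\mdn \subseteq W$, and since $\mdn$ generates $\pind{\ap} \mdn$ as a $\ag$-module we conclude $W = \pind{\ap} \mdn$.

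For (c), I begin by analysing $V^{\au}$ via the PBW identification $V \cong \envalg{\au^-} \otimes_{\CC} \mdn$. Any $v \in V^{\au}$ splits as $v_0 + v_-$ with $v_0 \in \wun \otimes \mdn$ and $v_- \in \envalg{\au^-}_{<0} \otimes \mdn$, and since $\au$ annihilates $\wun \otimes \mdn$, both summands lie in $V^{\au}$ separately. The submodule $\envalg{\ag} v_- = \envalg{\au^-} \envalg{\al} v_-$ (using $\au v_- = 0$) remains inside $\envalg{\au^-}_{<0} \otimes \mdn$, since $\al$ preserves this subspace and $\envalg{\au^-} \cdot \envalg{\au^-}_{<0} \subseteq \envalg{\au^-}_{<0}$; hence $\envalg{\ag} v_- \cap (\wun \otimes \mdn) = 0$ and $v_- \in M$. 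This gives the decomposition $V^{\au} = (\wun \otimes \mdn) \oplus M^{\au}$, which identifies the image of $V^{\au}$ in the quotient with $\mdn$. The main obstacle I expect is to show that every element of $(V/M)^{\au}$ is represented by some element of $V^{\au}$; I would settle this cohomological lifting question by an induction on the $h$-weight of a chosen lift $w$ of a given $\au$-invariant $\bar w$, iteratively modifying $w$ by elements of $M$ so as to trivialise the obstruction $\au w \subseteq M$ layer by layer, using that $M$ is built from submodules concentrated in lower $h$-slabs on which such corrections can be made explicitly.

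Parts (a) and (d) then follow formally. Inclusion-preservation in (a) is immediate from the left-exactness of $(-)^{\au}$. For the simplicity assertion, given a simple weight $\ag$-module $\mdm$ with $\mdm^{\au} \neq 0$, let $N \subseteq \mdm^{\au}$ be any nonzero $\al$-submodule; the natural map $\pind{\ap} N \to \mdm$ is nonzero and therefore surjective by simplicity of $\mdm$, and its kernel has trivial intersection with $N$ (any element of the intersection would lie in the kernel of the inclusion $N \hookrightarrow \mdm$), which forces the kernel to be zero by the almost-simple property of $\pind{\ap} N$. Hence $\pind{\ap} N \cong \mdm$, and applying $\uinv{\ap}$ with (c) yields $\mdm^{\au} = N$, so $\mdm^{\au}$ is simple. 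For (d), functoriality turns the inclusion $\iota \colon \mdn \hookrightarrow \mdn'$ into a $\ag$-map $\pind{\ap}(\iota) \colon \pind{\ap} \mdn \to \pind{\ap} \mdn'$; the simplicity of $\pind{\ap} \mdn$ from (b) forces this map to be either zero or injective, and the naturality of the isomorphism in (c) identifies its restriction to the top space with $\iota$ itself, which is nonzero, so $\pind{\ap}(\iota)$ is injective.
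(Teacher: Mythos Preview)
Your arguments for (b) and (d) match the paper's in spirit; for (d) you should still justify that $\pind{\ap}$ is functorial on inclusions --- the paper does this by noting that parabolic induction is exact, so the maximal ``null'' submodule for $\mdn$ lands inside that for $\mdn'$ under $V\hookrightarrow V'$. The substantive differences are in (c) and (a).

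For (c), the lifting obstacle you flag is a red herring. The paper works entirely inside the quotient $\pind{\ap}\mdn$: the image of $\au^-\envalg{\au^-}\otimes\mdn$ there is still an $\al$-submodule (because $[\al,\au^-]\subseteq\au^-$), so any $\au$-invariant weight vector $v$ lying in that image satisfies $\envalg{\ag}\cdot v=\envalg{\au^-\oplus\al}\cdot v$, which is again contained in that image. This is a $\ag$-submodule of $\pind{\ap}\mdn$ with zero intersection with $\wun\otimes\mdn$, hence zero by the defining property of the almost-simple quotient. So $(\pind{\ap}\mdn)^{\au}\subseteq\wun\otimes\mdn$ directly --- no lift of $\au$-invariants from $V/M$ to $V$ is ever needed. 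Your inductive correction scheme, even if it can be made to terminate, is an unnecessary detour.

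For (a), the paper does not go through (c). It argues directly: for weight vectors $v_1,v_2\in\mdm^{\au}$, simplicity and PBW give $v_i=U_i v_{3-i}$ with $U_i\in\envalg{\au^-\oplus\al}$ weight vectors; from $v_1=U_1U_2v_1$ the weights of the $U_i$ sum to zero, and since the monoid generated by the roots of $\au^-$ meets $\rlat_{\al}$ only in $0$, both $U_i$ must lie in $\envalg{\al}$. Your route via (b) and (c) is clever but has two soft spots. First, to even get a map $\pind{\ap}N\to\mdm$ you must check that the maximal ``null'' submodule $M$ for $N$ maps to zero in $\mdm$ --- this is not automatic from the definition. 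Second, invoking (c) for an \emph{arbitrary} $\al$-submodule $N\subseteq\mdm^{\au}$ implicitly uses that $\supp N$ lies in a single $\rlat_{\al}$-coset (otherwise your claimed direct sum $V^{\au}=(\wun\otimes\mdn)\oplus M^{\au}$ can fail, as $M$ need not sit inside $\envalg{\au^-}_{<0}\otimes\mdn$); but knowing this about $\mdm^{\au}$ is essentially the content of (a) itself.
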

\begin{proof}
	We first prove part \ref{it:functlex}.  The fact that $\uinv{\ap}$ preserves inclusions is clear.  Suppose then that $\mdm$ is a simple weight $\ag$-module with $\uinv{\ap} \mdm \neq 0$ and that $v_1$ and $v_2$ are (non-zero) weight vectors in $\uinv{\ap} \mdm \subseteq \mdm$.  Since $v_1$ and $v_2$ are annihilated by $\au$, \pbw{} and simplicity imply that $\envalg{\au^- \oplus \al} \cdot v_i = \envalg{\ag} \cdot v_i = \mdm$, for $i=1,2$.  In particular, $v_1 = U_1 v_2$ and $v_2 = U_2 v_1$ for some $U_1, U_2 \in \envalg{\au^- \oplus \al}$.  But, $v_1 = U_1 U_2 v_1$ requires that $U_1, U_2 \in \envalg{\al}$ because the $\NN$-span of the roots of $\au^-$, which are all negative with respect to an appropriate Borel, have zero intersection with the root lattice of $\al$.  Thus, $v_1$ and $v_2$ lie in the same $\al$-submodule of $\uinv{\ap} \mdm$, proving that the latter is simple.

	For part \ref{it:functrex}, suppose that $\mdn$ is a simple weight $\al$-module.  Since any non-zero $w \in \mdn$ is cyclic, $\wun \otimes w$ (or rather its image in the almost-simple quotient) must generate $\pind{\ap} \mdn$.  However, the submodule generated by any non-zero $v \in \pind{\ap} \mdn$ must contain an element of the form $\wun \otimes w$, for some $w \in \mdn$, because otherwise its intersection with $\wun \otimes \mdn$ would be zero.  This submodule is thus $\pind{\ap} \mdn$, proving that the latter is simple.

	To prove part \ref{it:functinv}, first note that $\mdn \cong \wun \otimes \mdn \subseteq \uinv{\ap} \pind{\ap} \mdn$ because $\au \cdot (\wun \otimes \mdn) = 0$.  If this inclusion were strict, then \pbw{} would imply that there exists a non-zero $v \in \au^- \envalg{\au^-} \otimes \mdn$ with $\au \cdot v = 0$.  Since $\comm{\al}{\au^-} \subseteq \au^-$, we would have
	\begin{equation}
		\envalg{\ag} \cdot v = \envalg{\au^- \oplus \al} \cdot v \subseteq \au^- \envalg{\au^-} \otimes \mdn.
	\end{equation}
	But then, $v$ would generate a non-zero submodule of $\pind{\ap} \mdn$ whose intersection with $\wun \otimes \mdn$ is zero, a contradiction.  We therefore conclude that the inclusion is an equality.

	Finally, inducing from weight $\ap$-modules to $\ag$-modules is exact, by \pbw{}.  The sum of the submodules of $\envalg{\ag} \otimes_{\envalg{\ap}} \mdn$ whose intersection with $\wun \otimes \mdn$ is zero therefore embeds into the sum of the submodules of $\envalg{\ag} \otimes_{\envalg{\ap}} \mdn'$ whose intersection with $\wun \otimes \mdn'$ is zero, so it follows that we have a morphism from $\pind{\ap} \mdn$ to $\pind{\ap} \mdn'$.  This morphism is non-zero since it is non-zero on $\wun \otimes \mdn$, hence it is injective by the simplicity of $\pind{\ap} \mdn$ (part \ref{it:functrex}).  This proves part \ref{it:functsimp}.
\end{proof}

Unsurprisingly, parabolic subalgebras turn out to be important when classifying simple weight modules.  For this, the following result is germane.
\begin{theorem}[Fernando \protect{\cite[Thm.~4.18]{FerLie90}}] \label{thm:fernando}
Every simple weight $\ag$-module with \fdim{} weight spaces is isomorphic to $\pind{\ap} \mdn$, for some parabolic subalgebra $\ap \subseteq \ag$, with Levi factor $\al$ of $\type{AC}$-type, and some simple dense $\al$-module $\mdn$.
\end{theorem}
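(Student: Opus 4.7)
The plan is to extract from $\mdm$ a parabolic subalgebra $\ap \subseteq \ag$ such that $\uinv{\ap}\mdm = \mdm^{\au}$ is a simple dense $\al$-module, identify $\mdm$ with $\pind{\ap}\uinv{\ap}\mdm$ via \cref{prop:functors}, and then invoke \cref{prop:FernandoAC} to conclude that $\al$ is of \type{AC}-type. To construct $\ap$, I would fix a non-zero root vector $e_\alpha$ for each root $\alpha$ of $\ag$ and observe that $K_\alpha = \{v \in \mdm : e_\alpha^n v = 0 \text{ for some } n\}$ is a submodule of $\mdm$, because $\operatorname{ad} e_\alpha$ is nilpotent on the finite-dimensional $\ag$; by simplicity it is either zero or all of $\mdm$. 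Call $\alpha$ \emph{locally nilpotent on $\mdm$} in the latter case and \emph{injective on $\mdm$} in the former, and let $\roots{\au}$ and $\roots{\al}$ denote the respective subsets of $\roots{}$. The key combinatorial claim is that these subsets define a parabolic decomposition: $\roots{\al} = -\roots{\al}$ is closed under root addition and forms a Levi subsystem, while $\roots{\au}$ is closed under addition by $\roots{\au} \cup \roots{\al}$ and is disjoint from $-\roots{\au}$. This is proved by analysing each root string through the $\SLA{sl}{2}$-triple $(e_\alpha, e_{-\alpha}, h_\alpha)$ and using finite-dimensionality of the weight spaces to exclude pathological configurations. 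Setting $\ap = \ah \oplus \bigoplus_{\alpha \in \roots{\al}} \ag_\alpha \oplus \bigoplus_{\alpha \in \roots{\au}} \ag_\alpha$ then gives the desired parabolic, with Levi $\al$ and nilradical $\au$.

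Next I would establish $\mdm^{\au} \neq 0$: choosing a linear functional on $\ah^*$ positive on $\roots{\au}$ and combining local nilpotency with the finite-dimensionality of weight spaces, an induction on a height function for $\roots{\au}$ produces a non-zero weight vector annihilated by every $e_\alpha$ with $\alpha \in \roots{\au}$. By \cref{prop:functors}\ref{it:functlex}, $\uinv{\ap}\mdm$ is then a simple $\al$-module. Each $e_\alpha$ with $\alpha \in \roots{\al}$ acts injectively on $\mdm$ and preserves the $\al$-invariant subspace $\mdm^{\au}$, so it acts injectively, hence bijectively by simplicity, on $\uinv{\ap}\mdm$. The support of $\uinv{\ap}\mdm$ is therefore invariant under shifts by all of $\roots{\al}$, so it is a single coset of $\rlat_{\al}$ and $\uinv{\ap}\mdm$ is dense. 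Finally, the inclusion $\uinv{\ap}\mdm \hookrightarrow \mdm$ is a map of $\ap$-modules (since $\au$ acts as zero on the source), so adjunction yields a non-zero $\ag$-module morphism $\envalg{\ag}\otimes_{\envalg{\ap}} \uinv{\ap}\mdm \to \mdm$, which descends to a non-zero map $\pind{\ap}\uinv{\ap}\mdm \to \mdm$. Both sides are simple, by \cref{prop:functors}\ref{it:functrex} and the hypothesis on $\mdm$, so this map is an isomorphism; and then \cref{prop:FernandoAC} forces $\al$ to be of \type{AC}-type, since it admits the simple dense module $\uinv{\ap}\mdm$.

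The hardest step will be Step~1: showing that $\roots{\al}$ is symmetric and that the dichotomy between locally nilpotent and injective root actions really reflects the root geometry of $\ag$. This is the substantive content of Fernando's theorem; it requires a careful analysis of how each $e_\alpha$ interacts with root strings through $\SLA{sl}{2}$-theory, and it is here that finite-dimensionality of the weight spaces enters in an essential way (for modules with infinite-dimensional weight spaces there is no such dichotomy in general). Proving $\mdm^{\au} \neq 0$ is also subtle without a highest-weight hypothesis, as one cannot simply take a maximal weight vector; the remainder of the argument is essentially formal, assembled from adjunction and the functorial machinery of \cref{prop:functors} and \cref{prop:FernandoAC}.
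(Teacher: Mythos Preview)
The paper does not prove this theorem; it is quoted from \cite{FerLie90} and used as a black box. Your outline has the right overall shape, but the construction of the parabolic contains a genuine error. Your sets $\roots{\au} = \set{\alpha : e_\alpha \text{ locally nilpotent}}$ and $\roots{\al} = \set{\alpha : e_\alpha \text{ injective}}$ \emph{partition} $\roots{\ag}$ by the simplicity argument you give, so the subspace $\ah \oplus \bigoplus_{\alpha \in \roots{\al} \cup \roots{\au}} \ag_\alpha$ you write down is always all of $\ag$. Worse, your ``key combinatorial claim'' is internally inconsistent unless $\roots{\au} = \varnothing$: if $\roots{\al} = -\roots{\al}$ and $\roots{\au} \cap (-\roots{\au}) = \varnothing$, then any $\alpha \in \roots{\au}$ has $-\alpha \in \roots{\al}$, hence $\alpha \in -\roots{\al} = \roots{\al}$, contradicting the partition. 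Concrete counterexamples: for a simple infinite-dimensional Verma $\SLA{sl}{2}$-module, $e$ is locally nilpotent and $f$ is injective, so $\roots{\al} = \set{-\alpha}$ is not symmetric; for a \fdim{} simple module every root vector is locally nilpotent, so $\roots{\au} \cap (-\roots{\au}) = \roots{\ag}$.

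The Levi root system should instead be the tautologically symmetric set $\set{\alpha \in \roots{\ag} : e_\alpha \text{ and } e_{-\alpha} \text{ both injective}}$; Fernando's substantial work is to show that this set is closed and is a genuine Levi subsystem. The complement then splits into roots with exactly one of $e_{\pm\alpha}$ locally nilpotent, which determine $\au$ and $\au^-$ canonically (put $\alpha$ into $\au$ when $e_\alpha$ is the nilpotent one), and roots with both $e_{\pm\alpha}$ locally nilpotent---the set $\nilroots{\mdm}$ of \cref{sec:refine}---for which one must \emph{choose} which half to adjoin to $\au$. That choice is precisely the small-Weyl-group ambiguity recorded in \cref{lem:dimitrov}. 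With $\ap$ so constructed, the remainder of your sketch (non-vanishing of $\mdm^{\au}$, simplicity and density of $\uinv{\ap}\mdm$, the isomorphism $\mdm \cong \pind{\ap}\uinv{\ap}\mdm$ via adjunction and \cref{prop:functors}, and the appeal to \cref{prop:FernandoAC}) is the right shape.
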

\noindent We note that if the parabolic is a Borel (so $\al = \ah$), then all simple $\al$-modules are dense and parabolic induction results in \hw{} $\ag$-modules.  Of course, parabolic induction does nothing if $\al = \ag$.

Suppose now that the reductive subalgebra $\al \subseteq \ag$ is of \type{AC}-type.  Then, there exists a semisimple coherent family $\mdc$ for $\al$, by \cref{thm:mathieu1}\ref{it:existence}.  As a natural generalisation of coherent families, we offer the following definition.
\begin{definition} \label{def:par}
	A \emph{parabolic family} of $\ag$-modules is a module $\mdp$ isomorphic to $\pind{\ap} \mdc$, for some parabolic subalgebra $\ap \subseteq \ag$, whose Levi factor $\al$ is of $\type{AC}$-type, and some coherent family $\mdc$ of $\al$-modules.
\end{definition}
\noindent Obviously, a coherent family is just a parabolic family corresponding to the parabolic subalgebra $\ag$.  Note that since $\mdp \cong \pind{\ap} \mdc$ by definition, we always have $\uinv{\ap} \mdp \cong \mdc$, by \cref{prop:functors}\ref{it:functinv}.

We remark that we were tempted to instead coin the term ``parabolic coherent family'' for the $\ag$-modules of \cref{def:par}.  However, these families are not necessarily ``coherent'' in the sense of Mathieu because their weight multiplicities need not be constant, even if we restrict to weights that differ by elements of the weight space $\ahs^*$ of $\al$.  Nevertheless, Fernando's theorem suggests that we will be able to use parabolic families to classify weight modules.

As for coherent families, a parabolic family
\begin{equation} \label{eq:decompP}
	\mdp = \bigoplus_{\lambda \in \ah^*/\rlat_{\ag}} \mdp_{\lambda}
\end{equation}
is said to be \emph{irreducible}, if at least one of the $\mdp_\lambda$ is a simple $\ag$-module, and \emph{semisimple}, if all of the $\mdp_{\lambda}$ are semisimple.  By \cref{prop:functors}, these notions are equivalent to $\mdc$ being irreducible and semisimple, respectively.

It is convenient at this point to note two useful facts.
\begin{proposition} \label{prop:finitelength}
	\leavevmode
	\begin{enumerate}
		\item \label{it:fincoh} \cite[Lem.~3.3]{MatCla00} The direct summands $\mdc_{\lambda}$ of a coherent family $\mdc$ of $\al$-modules have finite length.
		\item \label{it:finpar} \cite[Thm.~4.21]{FerLie90} Every finitely generated weight $\ag$-module with \fdim\ weight spaces has finite length.
	\end{enumerate}
\end{proposition}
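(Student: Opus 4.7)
The plan is to prove both parts by dimension-counting arguments calibrated to the structural constraints on the supports of simple subquotients. For part \ref{it:fincoh}, the uniform multiplicity $d$ of $\mdc$ directly bounds the composition length; for part \ref{it:finpar}, I would first reduce to a fixed generalised central character and then exploit \cref{thm:fernando} to bound the possible simple subquotients.

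For part \ref{it:fincoh}, I would argue by contradiction. Assume $\mdc_{\lambda}$ admits a strict chain $0 = \mdc^{(0)} \subsetneq \mdc^{(1)} \subsetneq \cdots \subsetneq \mdc^{(n)} \subseteq \mdc_{\lambda}$ with $n > d$. Each simple subquotient $\mdm_i = \mdc^{(i)}/\mdc^{(i-1)}$ embeds into a subquotient of $\mdc_{\lambda}$, so all of its weight multiplicities are bounded by $d$, making it a simple bounded $\al$-module. By \cref{thm:mathieu1}\ref{it:zariski}, the essential support $\suppess{\mdm_i}$ is Zariski-dense in an affine translate $\zeta_i + \ahs^*$; the inclusion $\supp{\mdm_i} \subseteq \supp{\mdc_{\lambda}} \subseteq \zeta + \ahs^*$ forces $\zeta_i + \ahs^* = \zeta + \ahs^*$. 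The intersection $\bigcap_{i=1}^n \suppess{\mdm_i}$ is therefore the complement in $\zeta + \ahs^*$ of a finite union of proper Zariski-closed subsets, hence non-empty. Choosing $\mu$ in this intersection and using additivity of weight multiplicities along the composition series yields
\begin{equation*}
	d = \dim \mdc_{\lambda}(\mu) \geq \sum_{i=1}^n \dim \mdm_i(\mu) \geq n,
\end{equation*}
contradicting $n > d$.

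For part \ref{it:finpar}, let $\mdm$ be a finitely generated weight $\ag$-module with \fdim{} weight spaces. The centre $\cent{\ag}{\ag}$ of $\envalg{\ag}$ acts on the \fdim{} weight spaces of $\mdm$ through commuting endomorphisms, and a finite set of generators of $\mdm$ supports only finitely many generalised central characters, so $\mdm$ decomposes as a finite direct sum $\bigoplus_{\chi} \mdm_{\chi}$ in which each summand is finitely generated with a single generalised central character. Fixing such a $\chi$, \cref{thm:fernando} shows that every simple composition factor of $\mdm_{\chi}$ has the form $\pind{\ap} \mdn$ for some parabolic $\ap \subseteq \ag$ with Levi $\al$ of \type{AC}-type and some simple dense $\al$-module $\mdn$. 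There are finitely many such parabolics, and for each one the condition that the $\cent{\al}{\al}$-character of $\mdn$ is determined by $\chi$ constrains $\mdn$ to lie in a finite set of isomorphism classes (appealing to the coherent family classification outlined in \cref{sec:ac}). A weight-space dimension count analogous to the one in part \ref{it:fincoh} then bounds the multiplicity with which each such composition factor can appear in $\mdm_{\chi}$, so $\mdm_{\chi}$ has finite length and hence so does $\mdm$.

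The main obstacle will be making the finiteness claim in part \ref{it:finpar} fully rigorous, namely that only finitely many simple dense modules of a reductive Lie algebra carry a prescribed central character; this rests on the full strength of Mathieu's structure theory for coherent families, in which central characters turn out to distinguish families up to minor ambiguities. Part \ref{it:fincoh} is comparatively straightforward once one is comfortable with the Zariski density of essential supports, though one must verify that the intersection of finitely many Zariski-dense subsets of the irreducible affine space $\zeta + \ahs^*$ remains non-empty.
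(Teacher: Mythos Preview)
The paper does not supply its own proof of this proposition; both parts are quoted directly from Mathieu and Fernando with citations and no argument, so there is no internal proof to compare against.

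Your argument for part~\ref{it:fincoh} has a real gap. You assert that each simple subquotient $\mdm_i$ is a simple \emph{bounded} $\al$-module and then invoke \cref{thm:mathieu1}\ref{it:zariski}. But in this paper ``bounded'' means \emph{infinite-dimensional} with uniformly bounded multiplicities (see the definition preceding \cref{thm:mathieu1}), and coherent families do admit finite-dimensional composition factors --- the $\SLA{sl}{2}$ coherent family with Casimir eigenvalue $0$ used in \cref{sec:catO} contains the trivial module, for instance. For such $\mdm_i$ the Zariski-density statement is unavailable and your intersection argument collapses: a finite support need not meet the common locus you construct. A separate bound on the number of finite-dimensional factors (for example via the finitely many central characters that can occur on a $d$-dimensional weight space) is required and is not supplied. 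There is also a possible order-of-dependence issue worth checking: you are invoking Mathieu's Prop.~3.5 to establish his Lem.~3.3, so one should confirm the former does not rely on the latter in his development.

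For part~\ref{it:finpar}, your sketch leans on the classification material in \cref{sec:ac}, which is Mathieu's work from a decade after Fernando's theorem, and you correctly flag the step ``only finitely many simple dense $\al$-modules carry a prescribed central character'' as the crux. That step is not a minor technicality: it is essentially the content you are trying to prove, and appealing to the coherent-family classification to justify it risks circularity, since Mathieu's structure theory itself builds on the Fernando finiteness results. Fernando's own proof of his Thm.~4.21 proceeds by quite different structural arguments (Gelfand--Kirillov dimension and the lattice of submodules) and does not pass through any classification of dense modules or coherent families.
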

\noindent From part~\ref{it:fincoh}, we learn that the $\mdc_{\lambda}$ are finitely generated, hence so are the $\mdp_{\lambda} \cong \pind{\ap} \mdc_{\lambda}$.  Consequently, the direct summands $\mdp_{\lambda}$ of a parabolic family $\mdp$ also have finite length, by part~\ref{it:finpar}.  It follows that any given parabolic family of $\ag$-modules has a \emph{semisimplification}, this being the semisimple parabolic family of $\ag$-modules obtained by replacing each of its direct summands by the direct sum of the summand's composition factors.  Clearly, the semisimplification of an irreducible parabolic family will also be irreducible.

If $\ap$ is a Borel and $\mdc$ is an irreducible semisimple coherent family for $\al = \ah$, then $\mdc$ is just a one-dimensional $\ah$-module.  The parabolic family $\mdp = \pind{\ap} \mdc$ is thus a simple \hwm{} (with respect to the Borel $\ap$).  When $\ap = \ag$, we instead get $\mdp = \mdc$.  This construction therefore interpolates between simple \hwms{} and coherent families for $\ag$.

We conclude this \lcnamecref{sec:para} with a few simple observations about the relationship between coherent and parabolic families.
\begin{proposition}\label{prop:CvsP}
	Let $\mdc$ be a coherent family of $\al$-modules and let $\mdp = \pind{\ap} \mdc$ be the associated parabolic family of $\ag$-modules.  Then:
	\begin{enumerate}
		\item \label{it:C=P} The $\al$-module embedding $\mdc \cong \wun \otimes \mdc \ira \mdp$ has the property that the weight spaces satisfy $\mdc(\mu) = \mdp(\mu)$, for all $\mu \in \supp{\mdc}$.
		\item \label{it:parpoly} The function $\tr_{\mdp(\mu)} U$ is polynomial in $\mu\in \supp{\mdc}$ for any $U\in\cent{\ah}{\ag}$.
		\item \label{it:simpquot} If $\mdm$ is a simple quotient of $\mdp$, then $\uinv{\ap} \mdm$ is a simple quotient of $\mdc$.
	\end{enumerate}
\end{proposition}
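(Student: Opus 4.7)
The plan is to prove parts (a), (b), (c) in sequence using two recurring tools: the PBW decomposition $\envalg{\ag} \cong \envalg{\au^-} \otimes \envalg{\al} \otimes \envalg{\au}$, and the cone fact that the $\az^*$-projections of the roots of $\au$ lie in a strict convex cone, so any non-zero element of $\umonoid$ has non-zero $\az^*$-component. This immediately yields $\umonoid \cap \ahs^* = \{0\}$ (since $\ahs^*$ is the kernel of $\ah^* \twoheadrightarrow \az^*$) and reaffirms the identity $\umonoid \cap \rlat_\al = \{0\}$ from \cref{sec:para}.

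For (a), I would compute weight spaces of $M := \envalg{\ag} \otimes_{\envalg{\ap}} \mdc$ via PBW: $M(\mu) = \bigoplus_\lambda \envalg{\au^-}(\lambda) \otimes \mdc(\mu - \lambda)$, summed over $\lambda \in -\umonoid$. For $\mu \in \supp{\mdc} = \zeta + \ahs^*$, the requirement $\mu - \lambda \in \zeta + \ahs^*$ forces $\lambda \in -\umonoid \cap \ahs^* = \{0\}$, giving $M(\mu) = \wun \otimes \mdc(\mu)$. Since $\wun \otimes \mdc$ embeds into the almost-simple quotient $\mdp$, passing to the quotient yields $\mdp(\mu) = \mdc(\mu)$.

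For (b), the same reasoning gives $\mdp(\mu) \subseteq \mdp^\au$ for $\mu \in \supp{\mdc}$: for a non-zero $x \in \au$, the vector $xv$ would have weight $\mu + \mathrm{wt}(x) \in \supp{\mdc} + (\umonoid \setminus \{0\})$, which cannot lie in $\supp{\mdc} - \umonoid$ (the set containing all weights of $\mdp$) by the cone property, so $xv = 0$. I would then expand $U \in \cent{\ah}{\ag}$ in PBW order $U = \sum_i F_i H_i G_i E_i$, with $F_i, E_i$ monomials in $\envalg{\au^-}, \envalg{\au}$, $H_i \in \envalg{\ah}$, and $G_i$ a PBW monomial in the non-Cartan root vectors of $\as$, each term forced to have weight zero. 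Acting on $v \in \mdp^\au$ kills every term with $E_i \neq 1$; for the survivors, weight-zero then forces $\mathrm{wt}(G_i) = -\mathrm{wt}(F_i) \in \umonoid \cap \rlat_\al = \{0\}$, so $F_i = 1$ as well. The diagonal part $U_0 = \sum_i H_i G_i$ lies in $\cent{\ah}{\al}$ and acts on $\mdp(\mu) = \mdc(\mu)$ identically to $U$, so $\tr_{\mdp(\mu)} U = \tr_{\mdc(\mu)} U_0$, which is polynomial in $\mu$ by \cref{def:coh}.

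For (c), since $\envalg{\ag} \otimes_{\envalg{\ap}} \mdc$ is $\ag$-generated by $\wun \otimes \mdc$, so is its quotient $\mdp$. Given a simple quotient $q : \mdp \twoheadrightarrow \mdm$, the image $q(\mdc) \subseteq \mdm$ is non-zero (otherwise $\mdm = 0$) and lands in $\mdm^\au = \uinv{\ap} \mdm$ because $q$ is an $\ag$-homomorphism. By \cref{prop:functors}\ref{it:functlex}, $\mdm^\au$ is a simple $\al$-module, so $q(\mdc) = \mdm^\au$, exhibiting $\uinv{\ap} \mdm$ as a simple quotient of $\mdc$. The main obstacle is the PBW bookkeeping in (b); once the $F_i = E_i = 1$ contributions have been isolated via the two cone identities, the polynomial dependence is immediate from the coherent family axiom.
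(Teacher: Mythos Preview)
Your proof is correct and follows essentially the same approach as the paper: part~\ref{it:C=P} via PBW and the fact that non-zero elements of $\umonoid$ cannot lie in $\ahs^*$, part~\ref{it:parpoly} by isolating the $\cent{\ah}{\al}$-component of $U$ (the paper phrases this as the decomposition $\cent{\ah}{\ag} = \cent{\ah}{\al} \oplus (\envalg{\ag}\au)^{\ah}$, which is exactly what your $E_i=F_i=1$ bookkeeping establishes), and part~\ref{it:simpquot} by the same non-zero-image argument combined with \cref{prop:functors}\ref{it:functlex}. Your treatment of~\ref{it:parpoly} is slightly more explicit than the paper's, but the underlying ideas are identical.
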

\begin{proof}
	For \ref{it:C=P}, first note that the \pbw{} theorem gives $\mdp(\mu)=\mdc(\mu)+\brac*{\au^- \envalg{\au^-} \otimes \mdc}(\mu)$.  Since $\mu \in \supp{\mdc} = \zeta + \ahs^*$ and the weights of $\au^- \envalg{\au^-}$ have empty intersection with $\ahs^*$, it follows that $\brac*{\au^- \envalg{\au^-} \otimes \mdc}(\mu)=0$.  This proves the first assertion.  The same intersection argument also shows that $\cent{\ah}{\ag}$ may be decomposed, again \emph{\`{a} la} \pbw, as $\cent{\ah}{\al} \oplus 	(\envalg{\ag} \au)^{\ah}$.  Since $\mdc$ is a coherent family for $\al$, the elements of $\cent{\ah}{\al}$ act polynomially on the $\mdc(\mu)$ with $\mu\in\supp{\mdc}$, while those of $(\envalg{\ag} \au)^{\ah}$ act as $0$.  Assertion \ref{it:parpoly} now follows from \ref{it:C=P}.

	To prove \ref{it:simpquot}, suppose that we have a simple quotient $\pi \colon \mdp \sra \mdm$.  Composing this with the inclusion $\mdc \ira \mdp$ from part \ref{it:C=P}, we get an $\al$-module homomorphism $v \in \mdc \mapsto \pi(\wun \otimes v) \in \mdm$ whose image is easily checked to lie in $\uinv{\ap} \mdm = \mdm^{\au}$.  If we assume that the image $\pi(\wun \otimes \mdc)$ is $0$, then we get
	\begin{equation}
		\mdm = \pi(\mdp) = \pi(\envalg{\ag} \cdot \wun \otimes \mdc) = \envalg{\ag} \cdot \pi(\wun \otimes \mdc) = 0,
	\end{equation}
	a contradiction.  We conclude that the composition $\mdc \to \uinv{\ap} \mdm$ is surjective as the target is a simple $\al$-module, by \cref{prop:functors}\ref{it:functlex}.
\end{proof}

\section{Simple module classification} \label{sec:class}

As before, let $\ag$ be a \fdim{} simple Lie algebra with fixed Cartan subalgebra $\ah$.  Our aim in this \lcnamecref{sec:class} is to classify the simple weight $\ag$-modules, with \fdim{} weight spaces, that are annihilated by some two-sided ideal $\aai$ of $\envalg{\ag}$.  To this end, we introduce
\begin{equation}
	\aaz = \frac{\envalg{\ag}}{\aai}
\end{equation}
and study the classification of simple weight $\aaz$-modules.  The motivating example corresponds to taking $\aai$ to be the Zhu ideal $\aai_{\kk}$ and $\aaz$ to be the Zhu algebra $\aaz_{\kk}$ of the simple level-$\kk$ affine vertex algebra associated to $\ag$, as in \cref{sec:intro}.  Another important example corresponds to taking $\aai$ to be the annihilating ideal of a given simple $\ag$-module.

We shall first determine when a given coherent family of $\ag$-modules is a $\aaz$-module before upgrading the result to parabolic families of $\ag$-modules.  This case serves to illustrate the strategy of the general proof with a minimum of complications.  Recall that $\cent{\ah}{\ag}$ denotes the centraliser of $\ah$ in $\envalg{\ag}$.  Let
\begin{equation}
	\aaa = \aai \cap \cent{\ah}{\ag}
\end{equation}
and note that $\aaa$ is a two-sided ideal of $\cent{\ah}{\ag}$.  We commence with the following very useful \lcnamecref{lem:Zmod}, whose underlying idea is surely well known (see \cite{AdaSom94} for example).
\begin{lemma} \label{lem:Zmod}
	A simple weight $\ag$-module $\mdm$ is a $\aaz$-module if and only if $\aaa \cdot v = 0$ for some non-zero $v \in \mdm$.
\end{lemma}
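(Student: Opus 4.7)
The forward implication is trivial: if $\aai \cdot \mdm = 0$ then $\aaa \cdot w = 0$ for every $w \in \mdm$, since $\aaa \subseteq \aai$. The substance of the lemma lies in the converse, and my strategy is to use the two-sided structure of $\aai$ to upgrade the annihilation of $v$ by the weight-zero part $\aaa$ into annihilation by all of $\aai$, after which simplicity of $\mdm$ finishes the job.

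Concretely, let $v \in \mdm(\mu) \setminus \set{0}$ satisfy $\aaa \cdot v = 0$ and set $\mdn = \set{w \in \mdm : \aai \cdot w = 0}$. Because $\aai$ is two-sided, $\mdn$ is an $\envalg{\ag}$-submodule of $\mdm$, so by simplicity it suffices to prove $v \in \mdn$. Since any two-sided ideal is stable under the adjoint action of $\ah$, the ideal $\aai$ decomposes as $\aai = \bigoplus_\beta \aai_\beta$, where $\aai_\beta = \aai \cap \envalg{\ag}_\beta$ is the weight-$\beta$ component and $\aai_0 = \aaa$. It therefore suffices to prove that $I_\beta v = 0$ for every weight $\beta$ and every $I_\beta \in \aai_\beta$.

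I would establish this final step by contradiction, exploiting simplicity of $\mdm$. If $I_\beta v \neq 0$, then $\envalg{\ag} \cdot I_\beta v = \mdm$, and extracting the weight-$\mu$ component yields $\mdm(\mu) = \envalg{\ag}_{-\beta} \cdot I_\beta v$. But for any $V \in \envalg{\ag}_{-\beta}$, the product $V I_\beta$ lies in $\aai$ (left-ideal property) and carries weight zero, so $V I_\beta \in \cent{\ah}{\ag} \cap \aai = \aaa$, whence $V I_\beta v = 0$ by our hypothesis on $v$. This would force $\mdm(\mu) = 0$, contradicting $v \neq 0$.

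I do not anticipate any genuine obstacle; the only point to take care over is the weight decomposition of $\aai$, which requires the standard fact that the eigenspace projectors for $\operatorname{ad}_H$ (for $H \in \ah$ generic enough to separate the finitely many weights appearing in any given $I$) can be written as polynomials in $\operatorname{ad}_H$, so that each $I_\beta$ really lies in $\aai$. Beyond this bookkeeping, the argument is a clean Harish-Chandra-style projection combined with the cyclicity enforced by simplicity.
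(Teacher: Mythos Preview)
Your argument is correct, but it proceeds along a different path from the paper's. The paper works with the submodule $\aai \cdot \mdm$ rather than the annihilator $\mdn$: using that $\aai$ is a \emph{right} ideal and that $v$ is cyclic, it gets $\aai \cdot \mdm = \aai \envalg{\ag} \cdot v = \aai \cdot v$, then splits $\aai = \aaa \oplus \aab$ by weight so that $\aai \cdot v = \aab \cdot v$; since every element of $\aab$ has non-zero weight, $v \notin \aab \cdot v$, so $\aai \cdot \mdm$ is proper and hence zero. Your approach is dual: you use the \emph{left}-ideal property in the key step, pulling $\envalg{\ag}_{-\beta}$ through $I_\beta$ to land back in $\aaa$, and you invoke simplicity via cyclicity of $I_\beta v$ rather than of $v$. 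The paper's route is a touch shorter (no contradiction, no case $\beta \neq 0$ to isolate), while yours makes the mechanism slightly more transparent by exhibiting $v$ itself as an $\aai$-annihilated vector before appealing to simplicity. Both rest on the same two ingredients---the $\ah$-weight decomposition of $\aai$ and its two-sidedness---so neither gains any real generality over the other.
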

\begin{proof}
	If $\mdm$ is a $\aaz$-module, then $\aai \cdot \mdm = 0$, hence $\aaa \cdot \mdm = 0$ as required.  Suppose therefore that $\aaa \cdot v = 0$ for some non-zero weight vector $v \in \mdm$.  We may decompose $\aai$ as $\aaa \oplus \aab$, where the elements of $\aab$ have non-zero weights.  As $v$ is cyclic and $\aai$ is a right-ideal of $\envalg{\ag}$, we have
	\begin{equation}
		\aai \cdot \mdm = \aai \cdot \envalg{\ag} \cdot v = \aai \cdot v = \aaa \cdot v \oplus \aab \cdot v = \aab \cdot v.
	\end{equation}
	However, the (non-zero) elements of $\aab$ have non-zero weights, so it follows that $v \notin \aab \cdot v$.  This proves that $\aai \cdot \mdm$ is a proper submodule of $\mdm$, hence it is $0$ because $\mdm$ is simple.
\end{proof}

Consider now a semisimple coherent family $\mdc$ for $\ag$.  We choose a simple bounded submodule $\mdh \subset \mdc$ (this exists by \cref{thm:mathieu1}\ref{it:hwsubmod}).  We shall suppose that $\mdh$ is a $\aaz$-module, so that $\aai \cdot \mdh = 0$.  Then, $\tr_{\mdh(\mu)} a = 0$ for all $a \in \aai$ and $\mu \in \supp{\mdh}$.  But, $\mdh(\mu) = \mdc(\mu)$ for all $\mu \in \suppess{\mdh}$ (\cref{thm:mathieu1}\ref{it:degrees}), a set that is Zariski dense in $\ah^*$ (\cref{thm:mathieu1}\ref{it:zariski}).  We therefore have
\begin{equation} \label{eq:zerotrace}
	\tr_{\mdc(\mu)} a = 0, \qquad \text{for all}\ a \in \aaa\ \text{and}\ \mu \in \supp{\mdc} = \ah^*,
\end{equation}
since the trace of the action of $a \in \aaa \subset \aai$ is polynomial in $\mu$.  Now, $\dim \mdc(\mu) < \infty$, so replacing $a$ in \eqref{eq:zerotrace} by $a^n$, $n \in \ZZ_{>0}$, shows that $0$ is the only eigenvalue of $a$.  We conclude that every $a \in \aaa$ acts nilpotently on every $\mdc(\mu)$, $\mu \in \ah^*$.

Choose a simple direct summand $\mdm \subset \mdc$.  Then, each non-zero weight space $\mdm(\mu)$ is a simple $\cent{\ah}{\ag}$-module.  It follows that $\aaa \cdot \mdm(\mu)$ is either $0$ or $\mdm(\mu)$ because $\aaa$ is an ideal in $\cent{\ah}{\ag}$.  But, $\aaa \cdot \mdm(\mu) = \mdm(\mu) \neq 0$ would imply that any non-zero $v$ generates $\mdm(\mu)$, as an $\aaa$-module, and so satisfies $v \in \aaa \cdot v$.  However, having $v = av$, for some $a \in \aaa$, contradicts our earlier conclusion that $a$ must act nilpotently on $\mdm(\mu) \subseteq \mdc(\mu)$.  We therefore conclude that $\aaa \cdot \mdm(\mu) = 0$ for all $\mu \in \ah^*$, hence that $\aaa$ annihilates $\mdm$.  By \cref{lem:Zmod}, every simple direct summand $\mdm \subset \mdc$ is a $\aaz$-module, hence so is $\mdc$, as desired.

Note that every simple bounded \hw{} $\aaz$-module $\mdh$ embeds into some irreducible semisimple coherent family $\mdc$ (\cref{thm:mathieu1}\ref{it:cohextension}).  By the above argument, $\mdc$ and all its direct summands are $\aaz$-modules.  By choosing $\mdh$ above to be \hw{}, we see that the classification of semisimple coherent families that are $\aaz$-modules is therefore essentially equivalent to that of simple bounded \hw{} $\aaz$-modules.
\begin{proposition} \label{thm:cohfamclass}
	An irreducible semisimple coherent family for $\ag$ is a $\aaz$-module if and only if any (and thus all) of its bounded \hw{} submodules are.
\end{proposition}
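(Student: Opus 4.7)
The forward direction is essentially immediate: if the entire coherent family $\mdc$ is a $\aaz$-module, then by definition $\aai \cdot \mdc = 0$, so $\aai$ also annihilates every submodule of $\mdc$, including any bounded \hw{} one. The non-trivial content is the converse, for which I would fix a bounded \hw{} submodule $\mdh \subset \mdc$ which is a $\aaz$-module and aim to show that every simple direct summand $\mdm$ of $\mdc$ is a $\aaz$-module. Semisimplicity of $\mdc$ would then yield the desired conclusion, and the ``any implies all'' phrasing falls out automatically since, once $\mdc$ is known to be a $\aaz$-module, so is every one of its bounded \hw{} submodules. The existence of at least one such $\mdh$ to begin with is guaranteed by \cref{thm:mathieu1}\ref{it:hwsubmod}.

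The key technical step is to upgrade the vanishing of the trace from $\mdh$ to all of $\mdc$. For any $a \in \aai$, one has $\tr_{\mdh(\mu)} a = 0$ for every $\mu \in \supp{\mdh}$; combining this with $\mdh(\mu) = \mdc(\mu)$ for $\mu \in \suppess{\mdh}$, furnished by \cref{thm:mathieu1}\ref{it:degrees}, the trace of $a$ on $\mdc(\mu)$ vanishes on $\suppess{\mdh}$. Restricting attention to $a \in \aaa = \aai \cap \cent{\ah}{\ag}$, the coherence of $\mdc$ makes $\mu \mapsto \tr_{\mdc(\mu)} a$ a polynomial function on $\ah^*$, and \cref{thm:mathieu1}\ref{it:zariski} says that $\suppess{\mdh}$ is Zariski-dense in $\ah^*$. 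Hence the polynomial must vanish identically. Replacing $a$ by each power $a^n$ and using $\dim \mdc(\mu) < \infty$, one concludes that every $a \in \aaa$ acts nilpotently on each weight space of $\mdc$.

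To conclude, I would combine this nilpotency with \cref{lem:Zmod}. Fix a simple direct summand $\mdm \subseteq \mdc$. Each non-zero weight space $\mdm(\mu)$ is a simple module over the centraliser $\cent{\ah}{\ag}$ (the standard density-theorem statement for simple weight modules), so the two-sided ideal $\aaa$ of $\cent{\ah}{\ag}$ either annihilates $\mdm(\mu)$ or satisfies $\aaa \cdot \mdm(\mu) = \mdm(\mu)$. In the latter case, any non-zero $v \in \mdm(\mu)$ would lie in $\aaa \cdot \cent{\ah}{\ag} \cdot v = \aaa \cdot v$, forcing $v = av$ for some $a \in \aaa$ and contradicting nilpotency of $a$ on $\mdc(\mu)$. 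Therefore $\aaa$ annihilates $\mdm$, and \cref{lem:Zmod} identifies $\mdm$ as a $\aaz$-module, completing the argument. The only step that is not a direct manipulation of the trace-polynomiality and Zariski-density properties already packaged in \cref{thm:mathieu1} is the simplicity of $\mdm(\mu)$ as a $\cent{\ah}{\ag}$-module; this is the main place where I would need to import a classical fact about simple weight modules rather than argue from scratch.
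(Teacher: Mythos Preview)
Your proposal is correct and follows essentially the same argument as the paper: the paper's proof (given in the text immediately preceding the proposition) uses exactly the same steps of trace vanishing on $\suppess{\mdh}$ via \cref{thm:mathieu1}\ref{it:degrees}, Zariski-density via \cref{thm:mathieu1}\ref{it:zariski} to extend the polynomial to all of $\ah^*$, the $a \mapsto a^n$ trick for nilpotency, and then the dichotomy for the $\cent{\ah}{\ag}$-ideal $\aaa$ acting on the simple $\cent{\ah}{\ag}$-module $\mdm(\mu)$ combined with \cref{lem:Zmod}. Your flagging of the simplicity of $\mdm(\mu)$ over $\cent{\ah}{\ag}$ as the one imported classical fact matches the paper, which likewise just asserts it.
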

\noindent Clearly, every \infdim{} \hw{} submodule of a coherent family is bounded.

We now extend this to a classification of \emph{all} simple weight $\aaz$-modules, with \fdim{} weight spaces, in terms of the classification of highest-weight $\aaz$-modules.  Recall the restriction- and induction-type functors $\uinv{\ap}$ and $\pind{\ap}$ from \cref{sec:para}.
\begin{definition} \label{def:lbound}
	Given a parabolic subalgebra $\ap \subseteq \ag$ with Levi factor $\al$, we say that a $\ag$-module $\mdm$ is \emph{$\al$-bounded} if $\uinv{\ap} \mdm$ is a bounded $\al$-module.
\end{definition}
\begin{proposition} \label{prop:parfamclass}
	Given a choice of parabolic subalgebra $\ap \subseteq \ag$, with non-abelian Levi factor $\al$ of \type{AC}-type, an irreducible semisimple parabolic family for $\ag$ will be a $\aaz$-module if and only if any (and thus all) of its $\al$-bounded \hw{} submodules are.
\end{proposition}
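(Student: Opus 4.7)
The plan is to transplant the argument of \cref{thm:cohfamclass} from the coherent family $\mdc$ over the Levi factor $\al$ to the parabolic family $\mdp = \pind{\ap} \mdc$ over $\ag$, using the compatibility between $\mdc$ and $\mdp$ recorded in \cref{prop:CvsP}.  Existence of $\al$-bounded \hw{} submodules of $\mdp$, which ensures that the statement is not vacuous, follows by combining \cref{thm:mathieu1}\ref{it:hwsubmod}---which gives a simple bounded \hw{} $\al$-submodule $\mdh_{\al} \subseteq \mdc$---with parabolic induction (\cref{prop:functors}\ref{it:functrex},\ref{it:functsimp}) to produce a simple $\al$-bounded \hw{} $\ag$-submodule of $\mdp$.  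The direction in which $\mdp$ is already a $\aaz$-module is then trivial, since submodules of a $\aaz$-module are $\aaz$-modules.

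For the nontrivial direction, fix a simple $\al$-bounded \hw{} submodule $\mdh \subseteq \mdp$ that is a $\aaz$-module and set $\aaa = \aai \cap \cent{\ah}{\ag}$, as in the proof of \cref{thm:cohfamclass}.  My first task is to show that every $a \in \aaa$ acts nilpotently on $\mdp(\mu)$ for all $\mu \in \supp{\mdc}$.  Since $\mdh$ is a simple \hw{} $\ag$-module, $\uinv{\ap} \mdh$ is a simple bounded \hw{} $\al$-submodule of $\mdc$ (by \cref{prop:functors}\ref{it:functlex}), and \cref{thm:mathieu1}\ref{it:degrees} forces $(\uinv{\ap} \mdh)(\mu) = \mdc(\mu)$ for every $\mu \in \suppess{\uinv{\ap} \mdh}$.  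Combined with \cref{prop:CvsP}\ref{it:C=P}, which gives $\mdc(\mu) = \mdp(\mu)$ for $\mu \in \supp{\mdc}$, the containment chain $(\uinv{\ap}\mdh)(\mu) \subseteq \mdh(\mu) \subseteq \mdp(\mu)$ collapses to $\mdh(\mu) = \mdp(\mu)$ on $\suppess{\uinv{\ap} \mdh}$.  Hence the function $\mu \mapsto \tr_{\mdp(\mu)} a$, polynomial in $\mu \in \supp{\mdc}$ by \cref{prop:CvsP}\ref{it:parpoly}, vanishes on the Zariski-dense (\cref{thm:mathieu1}\ref{it:zariski}) subset $\suppess{\uinv{\ap} \mdh}$, and therefore on all of $\supp{\mdc}$.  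Replacing $a$ by its powers yields the desired nilpotence.

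The last step is to convert this nilpotence into annihilation of each simple direct summand $\mdm \subseteq \mdp$, at which point \cref{lem:Zmod} will promote $\mdm$ to a $\aaz$-module and semisimplicity of $\mdp$ will complete the argument.  The key point---and the step I expect to require the most care---is that by the defining property of the almost-simple quotient, the intersection $\mdm \cap (\wun \otimes \mdc)$ must be non-zero, so there exists some $\mu \in \supp{\mdm} \cap \supp{\mdc}$.  The weight space $\mdm(\mu)$ is then a simple $\cent{\ah}{\ag}$-module by the cyclicity argument used in \cref{thm:cohfamclass}, so the two-sided ideal $\aaa \subset \cent{\ah}{\ag}$ acts on it either as zero or surjectively; the latter is precluded by the nilpotence established on the finite-dimensional ambient space $\mdp(\mu) \supseteq \mdm(\mu)$, so $\aaa \cdot \mdm(\mu) = 0$ and \cref{lem:Zmod} applies.
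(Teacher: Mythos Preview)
Your proof is correct and follows essentially the same route as the paper's own argument.  One small remark: you assert that $\uinv{\ap}\mdh$ is a \emph{highest-weight} $\al$-module, which need not hold in general (and is not claimed in the paper), but you never actually use this property---only simplicity and boundedness, which are exactly what is needed for the Zariski-density of the essential support.
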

\begin{proof}
	Let $\mdp$ be such an irreducible semisimple parabolic family and let $\mdc = \uinv{\ap} \mdp$, so that $\mdc$ is a coherent family of $\al$-modules with $\mdp \cong \pind{\ap} \mdc$.  Suppose that $\mdh \subset \mdp$ is a simple $\al$-bounded submodule that happens to be a $\aaz$-module: $a \cdot \mdh(\mu) = 0$ for all $a \in \aai$ and $\mu \in \supp{\mdh}$.  We now focus on the $\al$-submodule $\uinv{\ap} \mdh$ of $\mdh$, restricting $a$ to $\aaa$ and $\mu$ to $\suppess{\uinv{\ap} \mdh}$.  As $\uinv{\ap} \mdh$ is a simple bounded $\al$-submodule of $\mdc$, by \cref{prop:functors}\ref{it:functlex}, its essential support is Zariski-dense in $\supp{\mdc}$.  Moreover, $\mu \mapsto \tr_{\mdp(\mu)} a$ is polynomial in $\mu \in \supp{\mdc}$, for each $a \in \aaa$, by \cref{prop:CvsP}\ref{it:parpoly}.  We therefore conclude, as in the coherent family argument above, that $\aaa$ acts nilpotently on each $\mdp(\mu) = \mdc(\mu)$ with $\mu \in \supp{\mdc}$.

	Any simple $\ag$-submodule $\mdm \subset \mdp$ has a non-zero image under $\uinv{\ap}$ because a zero image would mean that $\mdm$ has zero intersection with $\uinv{\ap} \mdp \cong \mdc$ and hence be zero in $\mdp \cong \pind{\ap} \mdc$.  We may therefore choose a (non-zero) weight vector $v \in \uinv{\ap} \mdm$ and let $\mu$ denote its weight.  Since $\uinv{\ap} \mdm \subset \mdc$, by \cref{prop:functors}\ref{it:functlex}, it follows that $\mu \in \supp{\mdc}$ and so $\aaa$ acts nilpotently on $v \in (\uinv{\ap} \mdm)(\mu)$.  As above, $v$ generating the simple $\cent{\ah}{\ag}$-module $(\uinv{\ap} \mdm)(\mu)$ under the action of $\aaa$ contradicts the nilpotence of this action.  $\aaa$ must therefore annihilate $v \in \mdm$, whence $\mdm$ must be a $\aaz$-module, by \cref{lem:Zmod}, and the proof is complete.
\end{proof}

We are now ready to prove our classification result.
\begin{theorem}\label{thm:classification}
	Let $\ag$ be a \fdim{} simple Lie algebra and let $\aaz$ be a quotient of $\envalg{\ag}$ by a two-sided ideal.  Then, a simple weight $\ag$-module $\mdm$, with \fdim{} weight spaces, is a $\aaz$-module if and only if either of the following statements hold:
	\begin{itemize}
		\item $\mdm$ is a \hw{} $\aaz$-module, with respect to some Borel subalgebra of $\ag$.
		\item There is a parabolic subalgebra $\ap \subseteq \ag$, with non-abelian Levi factor $\al$ of \type{AC}-type, and a corresponding irreducible semisimple parabolic family $\mdp$ of $\ag$-modules such that $\mdm$ is isomorphic to a submodule of $\mdp$ and some submodule of $\mdp$ is an $\al$-bounded \hw{} $\aaz$-module.
	\end{itemize}
\end{theorem}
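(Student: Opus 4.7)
The plan is to handle both implications, with the heavy lifting coming in the forward direction from a combination of Fernando's structural theorem and Mathieu's coherent family machinery. The converse direction is quick: a highest-weight $\aaz$-module is tautologically a $\aaz$-module, and if $\mdm$ is a submodule of an irreducible semisimple parabolic family $\mdp$ that has an $\al$-bounded highest-weight $\aaz$-submodule, then \cref{prop:parfamclass} at once upgrades this to the conclusion that $\mdp$, hence also $\mdm$, is a $\aaz$-module.

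The forward direction begins by invoking \cref{thm:fernando} to write $\mdm \cong \pind{\ap} \mdn$ for some parabolic $\ap \subseteq \ag$ with Levi factor $\al$ of $\type{AC}$-type and some simple dense $\al$-module $\mdn$. If $\ap$ is a Borel then $\al = \ah$ is abelian, $\mdn$ is one-dimensional and $\mdm$ is highest-weight, placing us in the first alternative. Otherwise $\al$ is non-abelian (every root of a Levi comes with its negative) and the simple dense $\mdn$ is bounded, so \cref{thm:mathieu1}\ref{it:cohextension} embeds it into a unique irreducible semisimple coherent family $\mdc$ of $\al$-modules. Setting $\mdp = \pind{\ap} \mdc$ then yields an irreducible semisimple parabolic family into which $\mdm$ embeds by \cref{prop:functors}\ref{it:functsimp}, and \cref{thm:mathieu1}\ref{it:hwsubmod} supplies a simple bounded highest-weight submodule $\mdh' \subset \mdc$ whose image $\mdh = \pind{\ap} \mdh'$ in $\mdp$ is the required $\al$-bounded highest-weight submodule.

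The one remaining and most delicate step is to verify that $\mdh$ is itself a $\aaz$-module. Here the plan is to mimic the polynomial-trace argument from the proof of \cref{prop:parfamclass}, but with $\mdm$ itself playing the role of the probe: from $\aai \cdot \mdm = 0$ one gets $\tr_{\mdm(\mu)} a = 0$ for every $a \in \aaa = \aai \cap \cent{\ah}{\ag}$, and on the essential support of $\uinv{\ap} \mdm \cong \mdn$ the weight multiplicities of $\mdm$ saturate those of $\mdp$ by \cref{thm:mathieu1}\ref{it:degrees} together with \cref{prop:CvsP}\ref{it:C=P}, so the trace on $\mdp(\mu)$ also vanishes there. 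Since this essential support is Zariski dense in $\supp{\mdc}$ by \cref{thm:mathieu1}\ref{it:zariski} and $\mu \mapsto \tr_{\mdp(\mu)} a$ is polynomial on $\supp{\mdc}$ by \cref{prop:CvsP}\ref{it:parpoly}, the trace must vanish on all of $\supp{\mdc}$; replacing $a$ by $a^n$ then forces each $a \in \aaa$ to act nilpotently on every $\mdp(\mu)$. Restricting to the one-dimensional highest-weight space of $\mdh$, nilpotence collapses the action to zero on the highest-weight vector, and \cref{lem:Zmod} promotes this to $\aai \cdot \mdh = 0$. The principal obstacle is exactly this bootstrap step of transferring annihilation by $\aai$ from $\mdm$ to all of $\mdp$, and it is unlocked precisely by the interplay of multiplicity saturation, Zariski density and polynomial trace control quoted above.
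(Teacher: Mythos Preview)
Your proof is correct and follows essentially the same approach as the paper's. The only cosmetic difference is that the paper dispatches the final step (showing that $\pind{\ap}\mdh'$ is a $\aaz$-module) by invoking \cref{prop:parfamclass} directly---observing that its proof applies verbatim with any simple $\al$-bounded $\aaz$-submodule (here $\mdm$ itself) in the role of the probe---whereas you unpack that same trace/Zariski-density/nilpotence argument explicitly and finish via the one-dimensionality of the highest-weight space rather than the simple $\cent{\ah}{\ag}$-module argument; both endgames are valid.
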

\begin{proof}
	\cref{prop:parfamclass} shows that every submodule $\mdm$ of such a parabolic family $\mdp$ is a $\aaz$-module.  Conversely, let $\mdm$ be a simple weight $\aaz$-module, with \fdim{} weight spaces.  We assume that $\mdm$ is not \hw{}, with respect to any Borel.  Then, \cref{thm:fernando} says that $\mdm \cong \pind{\ap} \mdn$, for some parabolic subalgebra $\ap \subseteq \ag$, with non-abelian Levi factor $\al$ of \type{AC}-type, and some simple dense $\al$-module $\mdn$.  As simple dense modules over a non-abelian $\al$ are bounded (\cref{sec:coh}), $\mdn$ embeds in an irreducible semisimple coherent family $\mdc$ of $\al$-modules, by \cref{thm:mathieu1}\ref{it:cohextension}.  But, \cref{thm:mathieu1}\ref{it:hwsubmod} ensures that $\mdc$ contains a simple bounded \hw{} submodule $\mdh$.  It thus follows from \cref{prop:functors}\ref{it:functsimp} that the irreducible semisimple parabolic family $\mdp = \pind{\ap} \mdc$ contains the simple $\al$-bounded \hw{} $\ag$-module $\pind{\ap} \mdh$.  It only remains to show that $\pind{\ap} \mdh$ is a $\aaz$-module.  However, this follows from \cref{prop:parfamclass} and the fact that $\mdm$ is a simple $\al$-bounded $\aaz$-module.
\end{proof}

This \lcnamecref{thm:classification} reduces the classification of simple weight $\aaz$-modules to that of simple highest-weight $\aaz$-modules and parabolic subalgebras with non-abelian Levi factors of \type{AC}-type.  The former is a difficult problem in general, through tractable in many important cases, while the latter is essentially combinatorial.  Note that \cref{thm:zhuclass} is a straightforward corollary of \cref{thm:classification} with $\aai$ taken to be the Zhu ideal of the simple level-$\kk$ affine vertex algebra $\affvoa{\kk}{\ag}$.

\section{Indecomposable modules} \label{sec:indec}

In this section, we study irreducible, but non-semisimple, parabolic families in order to determine when certain indecomposable $\ag$-modules are $\aaz$-modules.  Let $\roots{\al}$ denote the root system of $\al$ and let $e^{\alpha}$ denote the root vector corresponding to the root $\alpha \in \roots{\al}$.
\begin{definition} \label{def:bijective}
	A weight module $\mdn$ over $\al$ is \emph{$\alpha$-bijective}, for some given $\alpha \in \Delta_{\al}$, if $e^\alpha$ acts bijectively on $\mdn$.
\end{definition}
\noindent
Many examples of such modules were constructed by Mathieu \cite[Lem.~4.5]{MatCla00} using a powerful tool called twisted localisation.  In particular, for any irreducible semisimple coherent family $\mdc'$, there are $\alpha$-bijective coherent families $\mdc$ such that $\mdc'$ is the semisimplification of $\mdc$.  For $\ag=\SLA{sl}{2}$, the dense direct summands $\mdc_{\lambda}$ of an $\alpha$-bijective coherent family may also be constructed explicitly, see \cite[Sec.~7.8.16]{DixEnv96} or \cite[Sec.~3.3]{MazLec10}, or as modules induced from one-dimensional modules of the centraliser $\cent{\ah}{\SLA{sl}{2}}$, see \cite[Ex.~3.99]{MazLec10} or \cite[Sec.~3.2]{KawRel18}.  We note that this induction procedure can also result in indecomposable dense modules that are not $\alpha$-bijective for any root $\alpha$.

Given a simple bounded $\al$-module $\mdn$, let $\injmonoid{\mdn}$ denote the additive monoid generated by the roots $\alpha \in \Delta$ whose root vectors $e^\alpha$ act injectively on $\mdn$.  We need two straightforward results about these monoids.
\begin{proposition}[Mathieu \cite{MatCla00}] \label{thm:mathieu2}
	Let $\mdn$ be a simple bounded $\al$-module.  Then:
	\begin{enumerate}
		\item \label{it:mongp} [Lem.~3.1] The group-completion of the monoid $\injmonoid{\mdn}$ is $\rlat_{\al}$ (the root lattice of $\al$).
		\item \label{it:monsuppess} [Prop.~3.5i] For any $\lam\in\suppess{\mdn}$, we have $\lam+\injmonoid{\mdn} \subseteq \suppess{\mdn}$.
	\end{enumerate}
\end{proposition}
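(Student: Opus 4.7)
The plan is to handle the two parts separately, treating part~\ref{it:monsuppess} via a direct weight-space dimension argument and part~\ref{it:mongp} via Fernando's structure theorem for simple weight modules.

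For part~\ref{it:monsuppess}, I would observe that an injectively-acting root vector $e^{\alpha}$ gives an injection $e^{\alpha}\colon\mdn(\lam)\ira\mdn(\lam+\alpha)$ between \fdim{} weight spaces. For $\lam\in\suppess{\mdn}$ we have $\dim\mdn(\lam)=d$, the degree of $\mdn$, hence $\dim\mdn(\lam+\alpha)\geq d$; boundedness then forces equality, placing $\lam+\alpha$ back in $\suppess{\mdn}$. Iterating over any expression of $\beta\in\injmonoid{\mdn}$ as an $\NN$-sum of injective roots extends this to $\lam+\beta\in\suppess{\mdn}$.

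For part~\ref{it:mongp}, let $G\subseteq\rlat_{\al}$ denote the group-completion of $\injmonoid{\mdn}$. Since $\rlat_{\al}$ is $\ZZ$-generated by the roots of $\al$, it suffices to place every root in $G$. The plan is to invoke \cref{thm:fernando}, which realises $\mdn$ as the almost-simple parabolic induction $\pind{\ap'}\mdn'$, where $\ap'\subseteq\al$ is a parabolic subalgebra with Levi factor $\alg{m}$ of \type{AC}-type, nilradical $\alg{n}$, and opposite nilradical $\alg{n}^{-}$, and $\mdn'$ is a simple dense $\alg{m}$-module. One then identifies the injective roots of $\mdn$: the roots of $\alg{m}$ act injectively on $\mdn'$ by density of $\mdn'$ and this propagates to $\mdn$; the roots of $\alg{n}^{-}$ act injectively by a \pbw{} freeness argument; and the roots of $\alg{n}$ annihilate the top space $\wun\otimes\mdn'$ and so fail to be injective. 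Hence $\injmonoid{\mdn}$ contains $\rootsetsymb_{\alg{m}}\cup\rootsetsymb_{\alg{n}^{-}}$. Each simple root of $\al$ lying in $\alg{m}$ is therefore already in $G$, while each simple root of $\al$ lying in $\alg{n}$ has its negative in $\rootsetsymb_{\alg{n}^{-}}\subseteq G$. All simple roots of $\al$ thus belong to $G$, giving $\rlat_{\al}\subseteq G$.

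The main obstacle is verifying that the root vectors of $\alg{m}$ and $\alg{n}^{-}$ really do act injectively on the \emph{almost-simple quotient} $\pind{\ap'}\mdn'$, rather than only on the bare induced module $\envalg{\al}\otimes_{\envalg{\ap'}}\mdn'$. Injectivity on the induced module is immediate from \pbw{} together with the density of $\mdn'$; the worry is that passing to the almost-simple quotient could enlarge the kernel. The resolution is that any non-zero vector in such a putative kernel would lift to an element of the induced module whose $\envalg{\al}$-orbit has zero intersection with $\wun\otimes\mdn'$, and so already vanishes in the quotient. A careful \pbw-filtration argument makes this rigorous.
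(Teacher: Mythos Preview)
The paper does not supply a proof of this \lcnamecref{thm:mathieu2}; it is quoted directly from Mathieu with explicit references (Lem.~3.1 and Prop.~3.5i of \cite{MatCla00}).  There is therefore nothing in the paper to compare your argument against, so I will simply assess it on its own merits.

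Your argument for part~\ref{it:monsuppess} is correct and is the standard one.

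Your argument for part~\ref{it:mongp}, however, contains a genuine error rather than a mere technical gap.  The claim that every root of $\alg{n}^-$ acts injectively on $\pind{\ap'}\mdn'$ is \emph{false}.  Take $\al=\SLA{sl}{3}$ and $\mdn=\mdl_\lambda$ with $\lambda_1\in\NN$ and $\lambda_2\notin\ZZ$; this is simple and bounded (it satisfies condition~\ref{it:Aedge} of \cref{prop:bounded-a}).  Fernando's theorem realises it with $\ap'$ the standard Borel, so $\alg{m}=\ah$ and $\alg{n}^-$ is spanned by all negative root vectors.  But $(e^{-\sroot{1}})^{\lambda_1+1}v_\lambda=0$, so $e^{-\sroot{1}}$ acts locally nilpotently, not injectively.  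Your \pbw{} intuition is correct for the induced (Verma) module $M_\lambda$, where left multiplication by $e^{-\sroot{1}}$ in $\envalg{\alg{n}^-}$ is indeed injective, but injectivity genuinely fails upon passing to the simple quotient.

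Your proposed resolution also fails in this example: with $v=(e^{-\sroot{1}})^{\lambda_1}v_\lambda\in\mdl_\lambda$ one has $e^{-\sroot{1}}v=0$, yet $(e^{\sroot{1}})^{\lambda_1}v$ is a non-zero multiple of $v_\lambda$, so the $\envalg{\al}$-orbit of $v$ certainly meets the top space.  More conceptually, $\mdn$ is simple, so any non-zero vector generates all of it, rendering the proposed orbit-avoidance argument impossible.

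The Fernando route can be salvaged, but it requires more: one must show that the roots which \emph{do} act injectively already generate $\rlat_{\al}$.  In the example above these are $-\sroot{2}$ and $-\sroot{1}-\sroot{2}$, which suffice.  A clean way to organise this is to note that the set $\nilroots{\mdn}$ of roots $\alpha$ with both $e^{\pm\alpha}$ locally nilpotent is the root system of a reductive subalgebra $\alg{k}\subseteq\al$ (compare the proof of \cref{prop:swg}), that every root outside $\alg{k}$ lies in the group $G$ generated by $\injmonoid{\mdn}$, and then to argue via Dynkin-diagram connectivity (together with the factorisation of simple $\al$-modules across simple ideals and the infinite-dimensionality of $\mdn$) that $G=\rlat_{\al}$.
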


Our goal is to prove the following \lcnamecref{thm:parabolicindecomposable}.
\begin{theorem}\label{thm:parabolicindecomposable}
	Let $\ap$ be a parabolic subalgebra of $\ag$ with non-abelian Levi factor $\al$ of \type{AC}-type.
	Let $\mdc$ be an irreducible $\alpha$-bijective coherent family of $\al$-modules, for some $\alpha\in\Delta_\al$.
	Let $\mdp$ denote the irreducible parabolic family of $\ag$-modules induced from $\mdc$.
	Suppose that an $\al$-bounded \hw{} submodule $\mdh$ of $\mdp$ is a $\aaz$-module.
	Then $\mdp$, and hence all its subquotients, are also $\aaz$-modules.
\end{theorem}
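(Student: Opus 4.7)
The plan is to adapt the proof of \cref{prop:parfamclass}, replacing the Schur's-lemma step (which relied on semisimplicity) by an argument that exploits the $\alpha$-bijectivity of $\mdc$. The overall target is $\aai \cdot \mdp = 0$, which immediately implies that $\mdp$ and all its subquotients are $\aaz$-modules. First I would reduce the problem to a statement about $\mdc$: since $\mdp \cong \pind{\ap} \mdc$ is generated over $\envalg{\ag}$ by $\wun \otimes \mdc$ and $\aai$ is a two-sided ideal, one has $\aai \cdot \mdp = \aai \cdot \mdc$ as subspaces of $\mdp$, so it suffices to show that $\aai v = 0$ in $\mdp$ for every $v \in \mdc$.

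The next step is to run the trace/Zariski-density argument of \cref{prop:parfamclass} essentially verbatim. Combining $\aai \cdot \mdh = 0$ with the identification $(\uinv{\ap} \mdh)(\mu) = \mdc(\mu) = \mdp(\mu)$ for $\mu \in \suppess{\uinv{\ap} \mdh}$ (from \cref{thm:mathieu1}\ref{it:degrees} and \cref{prop:CvsP}\ref{it:C=P}) shows that $\aai$ already annihilates $\mdc(\mu)$ in $\mdp$ for every $\mu$ in this Zariski-dense (\cref{thm:mathieu1}\ref{it:zariski}) subset of $\supp \mdc$. The polynomiality of traces (\cref{prop:CvsP}\ref{it:parpoly}) then upgrades this to the assertion that each $a \in \aaa = \aai \cap \cent{\ah}{\ag}$ acts nilpotently on every weight space $\mdc(\mu) = \mdp(\mu)$ with $\mu \in \supp \mdc$.

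The third and most delicate step is to promote these two ingredients---annihilation on a Zariski-dense locus and pointwise nilpotence elsewhere---into annihilation of all of $\mdc$ by $\aai$. The plan is to consider
\[
	\mdc^\flat := \set{v \in \mdc \st \aai v = 0 \text{ in } \mdp},
\]
which, by two-sidedness of $\aai$, is easily seen to be an $\al$-submodule of $\mdc$ containing $\uinv{\ap} \mdh$. Since $\uinv{\ap} \mdh$ inherits the injective action of $e^\alpha$ from $\mdc$, the root $\alpha$ lies in $\injmonoid{\uinv{\ap} \mdh}$, so \cref{thm:mathieu2}\ref{it:monsuppess} ensures that $\suppess{\uinv{\ap} \mdh}$ is closed under adding $\alpha$. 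The commutator identity $a e^\alpha v = e^\alpha(a v) + [a, e^\alpha] v$, with $[a, e^\alpha] \in \aai$ by two-sidedness, combined with the bijectivity of $e^\alpha$ on $\mdc$, then allows one to both advance and retract the annihilation property along $\alpha$-translates of weight spaces; together with the pointwise nilpotence obtained above, this is what one uses to force $\mdc^\flat$ to exhaust $\mdc$.

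The main obstacle is precisely this final inference. Without semisimplicity of $\mdp$ one cannot decompose it into simple summands and apply Schur's lemma weight-space by weight-space, as was done in \cref{prop:parfamclass}; the role of $\alpha$-bijectivity is to supply the rigidity that serves as a substitute for Schur, linking weight spaces so that $\aai$-annihilation on a Zariski-dense set forces $\aai$-annihilation everywhere. Once $\mdc^\flat = \mdc$ is established, the initial reduction gives $\aai \cdot \mdp = 0$, completing the proof.
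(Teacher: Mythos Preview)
Your reduction in step~1 and the nilpotence argument in step~2 are correct, but step~3 has a real gap that the sketched idea does not close.

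The first problem is one of cosets. The set on which you know genuine annihilation, namely $\suppess{\uinv{\ap}\mdh}$, lies entirely inside a single coset $\lambda_0+\rlat_\al$ of $\supp\mdc$. Your $\al$-submodule $\mdc^\flat$ therefore has full weight spaces only in that coset; for every other $\lambda$ you possess nothing beyond the nilpotence of $\aaa$ on $\mdc(\mu)=\mdp(\mu)$. Nilpotence alone is not enough: in the non-semisimple $\mdc_\lambda$ the weight spaces are no longer simple $\cent{\ah}{\ag}$-modules, so the Schur step from \cref{prop:parfamclass} is unavailable, and $\alpha$-bijectivity only moves you \emph{within} a coset, never between them. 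Thus as written your argument cannot touch the generic $\mdp_\lambda$ at all.

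The second problem is that even inside $\lambda_0+\rlat_\al$ your ``retract'' step is unjustified. Closure of $\mdc^\flat$ under $e^{\pm\alpha}$ is automatic since $\mdc^\flat$ is an $\al$-submodule, but closure under $(e^\alpha)^{-1}$ is not: from $\aai v=0$ and $v=e^\alpha w$ one only gets $e^\alpha(aw)=-[a,e^\alpha]w$, which does not force $aw=0$. Moreover, your step~2 controls only $\aaa=\aai\cap\cent{\ah}{\ag}$; elements of $\aai$ of non-zero weight send $\mdc(\mu)$ into $\au^-\envalg{\au^-}\otimes\mdc$, outside the region where your trace argument says anything.

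The paper's proof takes a genuinely different route. It works one $\mdp_\lambda$ at a time and looks at the simple \emph{quotients} $\mdm_i$ of $\mdp_\lambda$ rather than at the single submodule $\mdh$. Each $\mdm_i$ sits in the semisimplification of $\mdp$, so \cref{thm:classification} already forces it to be a $\aaz$-module; hence $\aai\cdot\mdp_\lambda\subseteq\rad\mdp_\lambda$. The role of $\alpha$-bijectivity is then only to guarantee (\cref{lem:head}) that every such $\mdm_i$ is $\al$-bounded, so that $\uinv{\ap}\mdm_i$ has an essential support on which $(\rad\mdp_\lambda)(\mu)=0$. Finally, noetherianity of $\envalg{\ag}$ (\cref{lem:Ugnoeth}) gives finitely many weight generators $U_j$ of $\aai$, and a ``fatness'' lemma (\cref{lem:fat}) together with \cref{lem:intersection} lets one choose a single weight $\mu_i$ at which $U_j\cdot\mdp_\lambda(\mu_i)=0$ for all $j$; the right-ideal property then kills $\aai$ on the whole of $\mdp_\lambda$. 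None of these ingredients---quotients and the radical, noetherianity, the fatness argument---appear in your sketch, and they are what actually replace the Schur step.
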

\noindent The idea behind the proof is that if such a parabolic family has an $\al$-bounded \hw{} $\aaz$-module as a submodule, then all its simple quotients are also $\aaz$-modules, by \cref{thm:classification}, hence the ideal $\aai$ must map each direct summand of the parabolic family into its radical.  We will show that the weight multiplicities of each simple quotient are frequently equal to those of the corresponding direct summand so that those of the radical are frequently zero.  This happens sufficiently often to prove that $\aai$ in fact maps each direct summand of the parabolic family to zero.

There are technical details required to make this idea precise.  For these, we have the following four \lcnamecrefs{lem:Ugnoeth}.  The first follows immediately from the well-known fact, see \cite[Cor.~2.3.8]{DixEnv96} for example, that $\envalg{\ag}$ is noetherian.
\begin{lemma} \label{lem:Ugnoeth}
	The ideal $\aai$ is finitely generated as a left-ideal of $\envalg{\ag}$.
\end{lemma}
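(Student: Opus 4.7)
The plan is to derive this directly from the classical noetherianity of $\envalg{\ag}$. Because $\ag$ is finite-dimensional, the \pbw{} theorem equips $\envalg{\ag}$ with the canonical ascending filtration $\envalg{\ag}_{\le 0} \subseteq \envalg{\ag}_{\le 1} \subseteq \cdots$ whose associated graded algebra is isomorphic to the symmetric algebra $S(\ag)$, that is, a polynomial ring in $\dim \ag$ commuting indeterminates.  Hilbert's basis theorem tells us that $S(\ag)$ is noetherian.

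The next step is to lift noetherianity from the associated graded back to $\envalg{\ag}$ itself.  This is routine: given any ascending chain $\aaj_1 \subseteq \aaj_2 \subseteq \cdots$ of left ideals of $\envalg{\ag}$, the induced chain $\gr \aaj_1 \subseteq \gr \aaj_2 \subseteq \cdots$ in $S(\ag)$ must stabilise by noetherianity of the latter, and a standard argument then forces the original chain to stabilise as well.  Hence $\envalg{\ag}$ is left-noetherian (and, by the symmetric argument, right-noetherian).

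The \lcnamecref{lem:Ugnoeth} is then immediate: a two-sided ideal is in particular a left ideal, and every left ideal of a left-noetherian ring is finitely generated.  Applying this to $\aai \subseteq \envalg{\ag}$ produces the desired finite generating set.  There is essentially no obstacle; the whole point of the \lcnamecref{lem:Ugnoeth} is to record this well-known fact (see \cite[Cor.~2.3.8]{DixEnv96} for a detailed exposition of the filtered-to-graded lifting) in the form in which it will be deployed in the proof of \cref{thm:parabolicindecomposable}, where finite generation will reduce the verification that $\aai$ annihilates a parabolic family to checking the action of finitely many generators on finitely many weight vectors.
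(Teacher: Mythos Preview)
Your proof is correct and follows exactly the same approach as the paper: the paper simply states that the result is immediate from the noetherianity of $\envalg{\ag}$, citing \cite[Cor.~2.3.8]{DixEnv96}, while you have unpacked the standard filtered-to-graded argument behind that citation. There is nothing to add.
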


\begin{lemma}\label{lem:head}
	Let $\mdc$ be an $\alpha$-bijective coherent family of $\al$-modules and $\mdp$ be the induced irreducible parabolic family of $\ag$-modules, as in \cref{thm:parabolicindecomposable}.  Then every simple quotient of $\mdp$ is $\al$-bounded.
\end{lemma}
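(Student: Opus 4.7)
The plan is to pull back any simple quotient of $\mdp$ to a simple quotient of $\mdc$, and then to exploit $\alpha$-bijectivity via a nilpotency argument. Concretely, let $\mdm$ be a simple quotient of $\mdp$. By \cref{prop:CvsP}\ref{it:simpquot}, $\uinv{\ap}\mdm$ is a simple quotient of $\mdc \cong \uinv{\ap}\mdp$. Using the decomposition $\mdc = \bigoplus_{\lambda}\mdc_{\lambda}$ from \eqref{eq:decompC}, this simple $\al$-module factors through some direct summand $\mdc_{\lambda}$, and its weight multiplicities are uniformly bounded by the degree $d$ of $\mdc$. The task therefore reduces to showing that every nonzero simple quotient of $\mdc_{\lambda}$ is \infdim.

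The key structural step is to transfer $\alpha$-bijectivity from $\mdc$ to its direct summands. Since $\alpha \in \rlat_{\al}$, the root vector $e^{\alpha}$ preserves every coset in $\supp{\mdc}/\rlat_{\al}$ and in particular preserves each $\mdc_{\lambda}$; the restriction of the bijection $e^{\alpha}\colon \mdc \to \mdc$ to $\mdc_{\lambda}$ is therefore itself a bijection. Consequently, $e^{\alpha}$ acts surjectively on every nonzero quotient of $\mdc_{\lambda}$.

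To conclude, I would invoke the classical fact that any root vector acts nilpotently on a \fdim{} $\al$-module: the weights form a finite set and $e^{\alpha}$ shifts them by $\alpha$, so some power of $e^{\alpha}$ annihilates every weight vector. A nonzero \fdim{} vector space cannot support an endomorphism that is both surjective and nilpotent, so any nonzero simple quotient of $\mdc_{\lambda}$ must be \infdim. Combined with the uniform multiplicity bound, this shows that $\uinv{\ap}\mdm$ is bounded as an $\al$-module, i.e., $\mdm$ is $\al$-bounded. I do not anticipate a substantive obstacle; the only subtle point is the preservation of the summands $\mdc_{\lambda}$ by $e^{\alpha}$, which is an immediate consequence of $\alpha \in \rlat_{\al}$.
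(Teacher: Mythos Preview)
Your proof is correct and follows essentially the same approach as the paper: both show that $\uinv{\ap}\mdm$ is a simple quotient of $\mdc$ with bounded multiplicities (via \cref{prop:CvsP}\ref{it:simpquot}), and both rule out finite-dimensionality by transferring the bijectivity of $e^{\alpha}$ on $\mdc$ to surjectivity on the quotient. The only cosmetic difference is that the paper picks a ``boundary'' weight $\mu$ with $\mu-\alpha$ outside the support and derives the contradiction in one line, whereas you phrase the same obstruction as ``surjective plus nilpotent on a nonzero space is impossible''; these are the same argument in different clothing.
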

\begin{proof}
	Let $\mdm$ be a simple quotient of $\mdp$.  By \cref{prop:CvsP}\ref{it:simpquot}, $\uinv{\ap} \mdm$ is a simple quotient of $\mdc$, so its multiplicities are uniformly bounded.  Let $\eta \colon \mdc \to \uinv{\ap} \mdm$ denote the quotient map.  The \lcnamecref{lem:head} will therefore follow if we can show that $\uinv{\ap} \mdm$ is \infdim{}.  Assume the contrary: that $\dim \uinv{\ap} \mdm < \infty$.  Then, there exists $\mu \in \supp{\uinv{\ap} \mdm}$ such that $\mu - \alpha \notin \supp{\uinv{\ap} \mdm}$.  As $e^{\alpha}$ acts bijectively on $\mdc$, we obtain
	\begin{equation}
		(\uinv{\ap} \mdm)(\mu) = \eta(\mdc(\mu)) = \eta(e^{\alpha} \cdot \mdc(\mu-\alpha)) = e^{\alpha} \cdot \eta(\mdc(\mu-\alpha)) = e^{\alpha} \cdot (\uinv{\ap} \mdm)(\mu-\alpha) = 0,
	\end{equation}
	a contradiction.  It follows that $\uinv{\ap} \mdm$ must be \infdim{}, completing the proof.
\end{proof}

\begin{lemma}\label{lem:fat}
	Let $\mdn$ be a simple bounded $\al$-module.  Then, for any finite subset $S$ of $\rlat_{\al}$, there is a weight $\mu \in \suppess{\mdn}$ such that $\mu+S\subset \suppess{\mdn}$.
\end{lemma}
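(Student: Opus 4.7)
\medskip

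\noindent\emph{Proof proposal.} The plan is to combine the two parts of \cref{thm:mathieu2}: part \ref{it:monsuppess} says that adding anything from $\injmonoid{\mdn}$ keeps us inside $\suppess{\mdn}$, while part \ref{it:mongp} says that every element of $\rlat_{\al}$, and in particular every $\beta\in S$, can be realised as a difference of two elements of $\injmonoid{\mdn}$. The trick is to pre-compensate for all the ``negative parts'' at once by shifting the base weight.

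More precisely, I would first use \cref{thm:mathieu2}\ref{it:mongp} to write, for each $\beta\in S$,
\begin{equation}
\beta=\gamma_\beta^+-\gamma_\beta^-,\qquad \gamma_\beta^{\pm}\in\injmonoid{\mdn},
\end{equation}
and then set $\gamma=\sum_{\beta\in S}\gamma_\beta^-$, which still lies in $\injmonoid{\mdn}$ because the latter is closed under addition. Since $\mdn$ is bounded and infinite-dimensional, the maximum of its multiplicities is attained, so $\suppess{\mdn}$ is non-empty; pick any $\lambda\in\suppess{\mdn}$ and define $\mu=\lambda+\gamma$. By \cref{thm:mathieu2}\ref{it:monsuppess}, this $\mu$ still lies in $\suppess{\mdn}$.

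It then remains to check $\mu+\beta\in\suppess{\mdn}$ for each $\beta\in S$. For this, I would rewrite
\begin{equation}
\mu+\beta=\lambda+\brac[\Big]{\sum_{\beta'\in S,\ \beta'\neq\beta}\gamma_{\beta'}^-}+\gamma_\beta^+,
\end{equation}
noting that the bracketed sum lies in $\injmonoid{\mdn}$ (empty sum allowed when $\abs{S}=1$), and so does $\gamma_\beta^+$; hence their sum does as well, and applying \cref{thm:mathieu2}\ref{it:monsuppess} once more to $\lambda$ finishes the argument. There is no real obstacle here beyond spotting that $\gamma$ must absorb \emph{all} the $\gamma_\beta^-$'s simultaneously so that a single choice of $\mu$ works uniformly in $\beta\in S$; once that idea is in hand, the proof reduces to the two displayed identities and two applications of \cref{thm:mathieu2}.
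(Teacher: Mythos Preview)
Your proof is correct and is essentially the same as the paper's: both decompose each $\beta\in S$ as a difference of elements of $\injmonoid{\mdn}$, sum all the ``negative parts'' to form a single shift, and apply \cref{thm:mathieu2}\ref{it:monsuppess} to conclude. The only cosmetic difference is notation and your justification for $\suppess{\mdn}\neq\varnothing$ (attainment of the maximal multiplicity rather than Zariski-density), which is equally valid.
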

\begin{proof}
	Note first that $\suppess{\mdn}$ is not empty (\cref{thm:mathieu1}\ref{it:zariski}).  So, choose $\lam\in\suppess{\mdn}$ and let $S=\set{\beta_1,\ldots,\beta_n} \subset \rlat_{\al}$.  Since the monoid $\injmonoid{\mdn}$ generates $\rlat_{\al}$ (\cref{thm:mathieu2}\ref{it:mongp}), the elements of $S$ have the form $\beta_i=\mu_i-\nu_i$, with $\mu_i,\nu_i\in \injmonoid{\mdn}$.  Set $\nu=\nu_1+\cdots+\nu_n$ and $\mu = \lambda + \nu$.  Then, $\nu \in \injmonoid{\mdn}$ so $\mu \in \suppess{\mdn}$ (\cref{thm:mathieu2}\ref{it:monsuppess}).  Moreover, $\nu+S \subset \injmonoid{\mdn}$ and so $\mu+S \subset \suppess{\mdn}$ (\cref{thm:mathieu2}\ref{it:monsuppess} again).  This proves the assertion.
\end{proof}

Recall that $\umonoid$ denotes the monoid generated by the roots of $\au$; it satisfies $\umonoid \cap \rlat_{\al} = 0$.  The monoid generated by the roots of $\au^-$ is therefore $-\umonoid$.
\begin{lemma} \label{lem:intersection}
	If $v$ is a non-zero weight vector in $\mdp$, then $\brac*{\envalg{\au} \cdot v} \cap (\wun \otimes \mdc)$ is non-zero.
\end{lemma}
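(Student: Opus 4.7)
The plan is to induct on the height of the minimal deficit $\delta \in \umonoid$ for which $\nu + \delta \in \supp{\mdc}$, where $\nu$ denotes the weight of $v$.  Such a $\delta$ exists because $\supp{\mdp} \subseteq \supp{\mdc} - \umonoid$, via the PBW isomorphism $\widetilde{\mdp} \cong \envalg{\au^-} \otimes \mdc$ with $\widetilde{\mdp} = \envalg{\ag} \otimes_{\envalg{\ap}} \mdc$ the induced module.  For the base case $\delta = 0$, the weight $\nu$ lies in $\supp{\mdc}$, so \cref{prop:CvsP}\ref{it:C=P} identifies $\mdp(\nu)$ with $\mdc(\nu)$, placing $v$ in $\wun \otimes \mdc$ and making $\wun \cdot v$ the required non-zero element of $\envalg{\au} v \cap (\wun \otimes \mdc)$.

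For the inductive step, write $\delta = \beta_1 + \cdots + \beta_k$ with each $\beta_j \in \roots{\au}$ and aim to find some $j$ such that $e^{\beta_j} \cdot v \neq 0$ in $\mdp$.  Once established, the weight of $e^{\beta_j} v$ has a strictly smaller minimal deficit, so the inductive hypothesis yields $\envalg{\au}(e^{\beta_j} v) \cap (\wun \otimes \mdc) \neq 0$, whence $\envalg{\au} v \cap (\wun \otimes \mdc) \neq 0$ as well.  To construct such a $\beta_j$, I would lift $v$ to $\tilde{v} \in \widetilde{\mdp}$, expand in a PBW basis as $\tilde{v} = \sum_i e^-_i \otimes w_i$, and use the identity $e^\beta \cdot (u^- \otimes w) = [e^\beta, u^-] \otimes w$, which follows from the fact that $\envalg{\au}$ acts as zero on $\wun \otimes \mdc$ in the induced structure; the commutator $[e^\beta, u^-]$ typically strips a factor from $u^-$, lowering its PBW-degree in $\envalg{\au^-}$.

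The main obstacle is the possibility that $e^{\beta_j} v$ vanishes in $\mdp$ for every $j$, even when the corresponding lifts $e^{\beta_j} \tilde{v}$ are non-zero in $\widetilde{\mdp}$.  To rule this out I would invoke the almost-simplicity of $\mdp$, which still guarantees $(\envalg{\ag} v) \cap (\wun \otimes \mdc) \neq 0$ and thus produces a non-zero $w^\ast = Uv \in \wun \otimes \mdc$.  PBW-straightening $U$ in the ordering $\envalg{\au} \cdot \envalg{\al} \cdot \envalg{\au^-}$ and tracking the $\az^*$-projections of weights (using the crucial fact that $\umonoid \cap \rlat_{\al} = 0$) forces the non-$\envalg{\au}$ factors to be absorbed via commutators, and one eventually obtains a non-zero element of $\mdc$ annihilated by the root vector $e^\alpha$ of $\al$ for which $\mdc$ is $\alpha$-bijective---contradicting the $\alpha$-bijectivity hypothesised in the ambient \cref{thm:parabolicindecomposable}.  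Hence some $e^{\beta_j} v$ must be non-zero, closing the induction.
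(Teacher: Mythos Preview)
Your induction has a genuine gap at the step you yourself flag as ``the main obstacle''.  You need, for the inductive step, some $\beta_j$ in your chosen decomposition of the minimal deficit $\delta$ with $e^{\beta_j} v \neq 0$ in $\mdp$, and your fallback argument does not establish this.  The claim that PBW-straightening and weight-tracking ``eventually obtains a non-zero element of $\mdc$ annihilated by the root vector $e^\alpha$'' is asserted without any mechanism; nothing in the setup produces such a vector, and there is no reason the failure of all $e^{\beta_j} v$ should conjure one.  Invoking $\alpha$-bijectivity is in any case a red herring: the lemma holds for an arbitrary parabolic family $\mdp = \pind{\ap}\mdc$ and the paper's proof uses nothing beyond the definition of the almost-simple quotient.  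Even if you replace the specific $\beta_j$ by an arbitrary $\beta \in \roots{\au}$ with $e^{\beta} v \neq 0$ (which does exist whenever $v \notin \wun \otimes \mdc$, since $\mdp^{\au} = \wun \otimes \mdc$), you have not shown that the minimal deficit of $\nu + \beta$ is strictly smaller, so the induction still does not close.

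The paper avoids induction entirely with a two-line contradiction.  Suppose $\envalg{\au} v \cap (\wun \otimes \mdc) = 0$.  By \cref{prop:CvsP}\ref{it:C=P}, membership in $\wun \otimes \mdc$ is detected purely by weight: a weight vector of $\mdp$ lies in $\wun \otimes \mdc$ if and only if its weight lies in $\supp{\mdc} = \zeta + \ahs^*$.  Hence no weight of $\envalg{\au} v$ lies in this coset.  Acting with $\envalg{\al}$ shifts weights by $\rlat_{\al} \subseteq \ahs^*$ and so cannot enter the coset from outside; acting with $\envalg{\au^-}$ shifts by $-\umonoid$, and $-\umonoid \cap \rlat_{\al} = 0$ gives the same conclusion.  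By PBW in the order $\envalg{\au^-}\,\envalg{\al}\,\envalg{\au}$, the full submodule $\envalg{\ag} v$ therefore misses $\wun \otimes \mdc$, forcing $v = 0$ by the definition of $\pind{\ap}$.  This is precisely the almost-simplicity you appeal to, but used directly rather than as a rescue for an induction that does not run.
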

\begin{proof}
	Suppose that $\envalg{\au} \cdot v$ has zero intersection with $\wun \otimes \mdc$.  Acting with $\envalg{\al}$ will not change this because it just adds elements of $\rlat_{\al}$ to the weights and the weights of $\mdc$ are already a shift of $\rlat_{\al}$.  Moreover, acting with $\envalg{\au^-}$ will not change this intersection either because $-\umonoid \cap \rlat_{\al} = 0$.  It now follows from \pbw{} that the $\envalg{\ag}$-submodule generated by $v$ has zero intersection with $\wun \otimes \mdc$ and is therefore $0$, by definition of $\mdp = \pind{\ap} \mdc$.
\end{proof}

We now prove \cref{thm:parabolicindecomposable}.
\begin{proof}[Proof of \cref{thm:parabolicindecomposable}]
	Recall the decomposition \eqref{eq:decompP} of $\mdp$ into $\ag$-submodules $\mdp_\lam$, $\lam \in \ah^*/\rlat_{\ag}$, which need not be simple.  We will show that each of the $\mdp_\lam$ are annihilated by $\aai$.  To do this, fix $\lam \in \ah^*/\rlat_{\ag}$ and let $\mdm_i$, $i \in I$, denote the simple quotients of $\mdp_\lam$.  We recall that the $\mdp_\lam$ are finite-length (\cref{prop:finitelength}), hence $I$ is non-empty.  Since each $\mdm_i$ is isomorphic to a submodule of the semisimplification of $\mdp$, which contains an $\al$-bounded \hw{} $\aaz$-module by hypothesis, it follows from \cref{thm:classification} that $\mdm_i$ is likewise a $\aaz$-module.  Thus, $\aai \cdot \mdm_i = 0$ for all $i \in I$, hence $\aai \cdot \mdp_\lam\subseteq \rad \mdp_\lam$, the radical of $\mdp_\lam$ (the intersection of its maximal proper submodules).

	Suppose now that $\mu \in \suppess{\uinv{\ap} \mdm_i}$.  Then, we have
	\begin{equation}
		\dim \mdm_i(\mu) \ge \dim (\uinv{\ap} \mdm_i)(\mu) = \dim \mdc_{\lambda}(\mu) = \dim \mdp_{\lambda}(\mu),
	\end{equation}
	by \cref{thm:mathieu1}\ref{it:degrees} and \cref{prop:CvsP}\ref{it:C=P}, which establishes the equality $\dim \mdm_i(\mu) = \dim \mdp_{\lambda}(\mu)$.  We conclude that $(\rad \mdp_{\lambda})(\mu) = 0$ whenever $\mu \in \suppess{\uinv{\ap} \mdm_i}$ for some $i \in I$.

	\cref{lem:Ugnoeth} ensures that there exist a finite number of elements $U_1,\dots,U_n$ that generate $\aai$ as a left-ideal of $\envalg{\ag}$.  Without loss of generality, we may take these elements to be weight vectors of $\envalg{\ag}$, denoting their weights by $\nu_j$, $j=1,\dots,n$.  For each $j$, define a set $S_j \subset \ah^*$ by
	\begin{equation}
		S_j = \brac*{\nu_j + \umonoid} \cap \rlat_{\al}
	\end{equation}
	and note that each $S_j$ is finite, a fact that is easily established by expanding $\nu_j$ in a basis of $\ah^*$ consisting of roots of $\al$ and $\au$.  Let $S$ denote the union of the $S_j$.  As each $\mdm_i$ is simple and $\al$-bounded, by \cref{lem:head}, it now follows from \cref{lem:fat} (with $\mdn = \uinv{\ap} \mdm_i$) that there exists a weight $\mu_i \in \suppess{\uinv{\ap} \mdm_i}$, for each $i \in I$, such that $\mu_i+S \subset \suppess{\uinv{\ap} \mdm_i}$.

	Recall that $U_j \cdot \mdp_{\lambda}(\mu_i) \subseteq (\rad \mdp_{\lambda})(\mu_i + \nu_j)$.  We want to show that the \rhs{}, and thus the \lhs{}, of this inclusion is zero.  To do so, act with $\envalg{\au}$ in order to bring the weight $\mu_i + \nu_j$ back to an element of $\mu_i + \rlat_{\al}$.  In other words, consider the subspace $\brac*{\envalg{\au} \cdot U_j \cdot \mdp_{\lambda}(\mu_i)} \cap (\wun \otimes \mdc_{\lambda})$ corresponding to weights lying in $\brac*{\mu_i + \nu_j + \umonoid} \cap (\mu_i + \rlat_{\al}) = \mu_i + S_j \subset \suppess{\uinv{\ap} \mdm_i}$.  We conclude that
	\begin{equation}
		\brac*{\envalg{\au} \cdot U_j \cdot \mdp_{\lambda}(\mu_i)} \cap (\wun \otimes \mdc_{\lambda}) \subseteq (\rad \mdp_{\lambda})(\mu_i + S_j) = 0,
	\end{equation}
	because the radical vanishes for weights in the essential support of some $\uinv{\ap} \mdm_i$.  It now follows from \cref{lem:intersection} that $U_j \cdot \mdp_{\lambda}(\mu_i) = 0$, for each $j=1,\dots,n$, as desired.

	Finally, the $U_j$ generate $\aai$ as a left-ideal, so $\aai \cdot \mdp_\lam(\mu_i)=0$ for all $i \in I$.  As $\aai$ is also a right-ideal of $\envalg{\ag}$, it therefore annihilates the submodule of $\mdp_\lam$ generated by each $\mdp_\lam(\mu_i)$, $i \in I$.  It therefore annihilates the sum of these submodules, which is clearly $\mdp_\lam$.  $\mdp_\lam$ is thus a $\aaz$-module, for all $\lam \in \ah^* / \rlat_{\ag}$, hence so is $\mdp$.
\end{proof}

Our \cref{thm:zhuind} is obtained by applying the induction functor of Zhu and Li to the parabolic family of $\aaz$-modules guaranteed by \cref{thm:parabolicindecomposable} (see \cref{thm:zhu}\ref{it:zhuindec}).  In this application, $\aaz$ is taken to be the Zhu algebra of an affine \voa{} $\vkg$ (as in \cref{sec:results}).

It is natural to consider a direct proof of \cref{thm:classification} using the twisted localisation functors introduced by Mathieu in \cite{MatCla00} and we hope to come back to this point in the future.  This leads us to ask if \cref{thm:parabolicindecomposable} can likewise be proved using localisation.  This is unclear to us at present because it is not obvious that every $\alpha$-bijective parabolic family can be constructed in this fashion.

\section{Coherent families of simple Lie algebras of \type{AC}-type}\label{sec:ac}

In this \lcnamecref{sec:ac}, we recall Mathieu's explicit classification \cite{MatCla00} of irreducible semisimple coherent families over a \fdim{} simple Lie algebra $\ag$.  As mentioned above, there are no coherent families if $\ag$ is not of type \type{A} or \type{C}.  For this classification, it will be convenient to introduce some terminology for weights $\lam \in \ah^*$.  In particular, we say that $\lam$ is \emph{integral}, \emph{shifted-singular} or \emph{shifted-regular} if $\lam + \wvec$ belongs to the weight lattice $\wlat_{\ag}$, lies on a Weyl chamber wall, or lies in the interior of a Weyl chamber, respectively.

Choose a Borel subalgebra of $\ag$, hence a notion of being \hw{}.  Let $\bounded_{\ag}$ denote the set of weights $\lambda \in \ah^*$ such that the simple \hw{} $\ag$-module of highest weight $\lambda$ is bounded.  As semisimple coherent families are invariant under the action of the Weyl group \cite[Prop.~6.2]{MatCla00}, it does not matter which choice of Borel we make.  Note that the set $\bounded_{\ag}$ is empty if $\ag$ is not of type \type{A} or \type{C} (\cref{thm:mathieu1}\ref{it:cohextension} and \ref{it:existence}).

\subsection{Type \type{A}}\label{sec:typea}

Let $\ag=\SLA{sl}{n+1}$ with $n\ge 1$.  With respect to our chosen Borel, we have simple roots $\sroot{1},\dots,\sroot{n}$, highest root $\hroot$, Weyl vector $\wvec$ and dominant integral weights $\wlat_{\ag}^{\ge}$.  For $\lam\in\ah^*$, set
\begin{equation}
	A(\lam)=\set*{i\in\set{1,\dots,n} \st \bilin{\lam+\wvec}{\scroot{i}} \notin\ZZ_{>0}},
\end{equation}
where the Killing form is normalised so that $\bilin{\hroot}{\hroot} = 2$.  Note that $A(\lambda) = \varnothing$ if and only if $\lambda \in \wlat_{\ag}^{\ge}$.

\begin{proposition}[Mathieu \protect{\cite[Lem.~8.1 and Prop.~8.5]{MatCla00}}] \label{prop:bounded-a}
	For $\ag = \SLA{sl}{n+1}$, the set $\bounded=\bounded_\ag$ consists of the elements $\lam\in\ah^*$ that satisfy at least one of the following conditions:
	\begin{enumerate}[ref=\textup{(\alph*)}]
		\item \label{it:Aedge} $A(\lam)=\set{1}$ or $\set{n}$.
		\item \label{it:Amid} $A(\lam)=\set{i}$ with $1<i<n$ and either $(\lam+\wvec,\scroot{i}+\scroot{i-1})\in\ZZ_{>0}$ or $(\lam+\wvec,\scroot{i}+\scroot{i+1})\in\ZZ_{>0}$.
		\item \label{it:Adoub} $A(\lam)=\set{i,i+1}$ with $1\le i<n$ and $(\lam+\wvec,\scroot{i}+\scroot{i+1})\in\ZZ_{>0}$.
	\end{enumerate}
\end{proposition}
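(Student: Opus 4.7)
The plan is to characterise $\bounded_\ag$ by combining Shapovalov's analysis of singular vectors in the Verma module $M(\lam)$ with parabolic induction: $L(\lam)$ should be bounded precisely when the singular vectors present in $M(\lam)$ suffice to cut down all but ``one or two free directions'' in the action of the nilradical, the allowed free directions being controlled by $A(\lam)$ together with integrality conditions on sums of adjacent coroots.

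For the sufficiency of \ref{it:Aedge}--\ref{it:Adoub}, I would realise $L(\lam)$ as the unique simple quotient of a generalised Verma module $M_\ap(F) = \envalg{\ag} \otimes_{\envalg{\ap}} F$, where $\ap \subseteq \ag$ is the parabolic whose Levi $\al$ is spanned by $\ah$ together with the root vectors for $\pm \sroot{j}$, $j \notin A(\lam)$, and $F$ is the finite-dimensional simple $\al$-module of highest weight $\lam$ (whose existence is guaranteed by the definition of $A(\lam)$).  In \ref{it:Aedge}, the parabolic is maximal with abelian nilradical of dimension exactly $n$, and a direct PBW count shows that the $n$ linear equations imposed by fixing a weight leave only a bounded number of monomials; hence $M_\ap(F)$, and so $L(\lam)$, is already bounded.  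In \ref{it:Amid}, the parabolic is still maximal with abelian nilradical but of dimension $i(n+1-i)$, which exceeds $n$, while in \ref{it:Adoub} the parabolic is non-maximal and the nilradical is no longer abelian; in both cases the PBW count alone fails to yield boundedness, but the integrality hypothesis $\bilin{\lam + \wvec}{\scroot{i} + \scroot{i \pm 1}} \in \ZZ_{>0}$ supplies, via Shapovalov, a singular vector at weight $\lam - m(\sroot{i} + \sroot{i \pm 1})$ for some $m \in \ZZ_{>0}$.  Quotienting $M_\ap(F)$ by the submodule generated by this singular vector imposes precisely the extra relations needed to reduce the growth of the weight multiplicities to the bounded regime.

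For the necessity, assume $L(\lam) \in \bounded$.  First, $A(\lam) \neq \varnothing$, for otherwise $L(\lam)$ is finite-dimensional.  I would then show that no two indices $i < j \in A(\lam)$ can satisfy $j > i+1$: if they did, the subalgebra $\SLA{sl}{2} \oplus \SLA{sl}{2}$ generated by the root vectors of $\sroot{i}$ and $\sroot{j}$ would act on $L(\lam)$ with no integrability in either factor, forcing the weight multiplicities to grow without bound along $\lam - \ZZ_{\ge 0}\sroot{i} - \ZZ_{\ge 0}\sroot{j}$.  When $A(\lam) = \{i\}$ with $1 < i < n$, an analogous analysis of the rank-two subalgebra generated by $\sroot{i-1}, \sroot{i}$ (or $\sroot{i}, \sroot{i+1}$) forces at least one additional Shapovalov singular vector to be present, precisely the content of \ref{it:Amid}.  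When $A(\lam) = \{i, i+1\}$, the same rank-two analysis singles out $\bilin{\lam+\wvec}{\scroot{i} + \scroot{i+1}} \in \ZZ_{>0}$ as the exact integrality constraint required to keep the otherwise-unbounded multiplicities finite, yielding \ref{it:Adoub}.

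The main obstacle is the necessity direction: pinning down \emph{exactly} which compensating integrality conditions are forced by boundedness, and excluding any ``accidentally bounded'' modules outside the three listed cases, requires a careful application of the Jantzen sum formula to the rank-two subalgebras involved.  The sufficiency direction, by contrast, reduces to an essentially elementary PBW-and-Shapovalov calculation once the correct parabolic has been identified.
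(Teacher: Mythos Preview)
The paper does not prove this statement; it merely quotes it from Mathieu's article \cite{MatCla00} (Lemma~8.1 and Proposition~8.5 there), so there is no in-paper argument to compare against.  Mathieu's own proof is quite different from your sketch: it passes through his degree formula for irreducible coherent families and translation functors, rather than through a direct Shapovalov/PBW count on generalised Verma modules.

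That said, your proposal has a genuine error in the necessity direction.  You argue that if $i,j\in A(\lam)$ with $j>i+1$, then the $\SLA{sl}{2}\oplus\SLA{sl}{2}$ generated by $\sroot{i}$ and $\sroot{j}$ forces the multiplicities to grow along $\lam-\ZZ_{\ge0}\sroot{i}-\ZZ_{\ge0}\sroot{j}$.  This is false: since $\sroot{i}$ and $\sroot{j}$ are non-adjacent simple roots, the only positive root supported on $\{\sroot{i},\sroot{j}\}$ alone is $\sroot{i}$ or $\sroot{j}$ itself, so the PBW monomial $f_{\sroot{i}}^a f_{\sroot{j}}^b v_\lam$ is the \emph{unique} vector of weight $\lam-a\sroot{i}-b\sroot{j}$ already in the full Verma module $M(\lam)$.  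Those weight spaces therefore have dimension at most~$1$ in $L(\lam)$, and no unboundedness is visible there.  Ruling out such $A(\lam)$ requires a more global argument (for instance via Gelfand--Kirillov dimension, or via Mathieu's coherent-family machinery), not a local rank-two obstruction.  Your sufficiency argument for case~\ref{it:Aedge} is correct (the $n$ roots of the abelian nilradical are linearly independent, so $S(\au^-)$ has all weight multiplicities equal to~$1$), but for cases~\ref{it:Amid} and~\ref{it:Adoub} the claim that a single extra Shapovalov relation ``imposes precisely the extra relations needed'' is an assertion, not an argument, and is exactly the non-trivial part of Mathieu's computation.
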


For example, only \ref{it:Aedge} applies when $\ag = \SLA{sl}{2}$, hence $\bounded_{\SLA{sl}{2}} = \ah^* \setminus \wlat_{\SLA{sl}{2}}^{\ge}$ is the set of weights whose Dynkin label is not a non-negative integer.  For $\ag = \SLA{sl}{3}$, \ref{it:Amid} does not apply and $\bounded_{\SLA{sl}{3}}$ is the union of two sets: one consisting of the weights that have precisely one non-negative integer Dynkin label and the other consisting of the weights with no non-negative integer Dynkin labels but for which the sum of the Dynkin labels lies in $\ZZ_{\ge -1}$.

For $\lam,\mu\in \bounded$, we write $\lam \ra \mu$ if there exists $i \in A(\lam)$ with $\mu=\swref{i} \cdot \lam$.  Here, $\swref{i}$ is the simple reflection $\wref{\sroot{i}}$ of the Weyl group $\wgrp \cong \grp{S}_{n+1}$ of $\ag$ and $\cdot$ denotes the shifted action of $\wgrp$ on $\ah^*$: $w \cdot \lam = w(\lam+\wvec)-\wvec$.  For $\ag = \SLA{sl}{2}$, we thus have $\lambda \ra \swref{1} \cdot \lambda$ whenever the Dynkin label $\lambda_1$ of $\lambda \in \bounded_{\SLA{sl}{2}}$ is not in $\ZZ_{\le-2}$; however when $\lambda_1 \in \ZZ_{\le-2}$, there is no $\mu \in \bounded_{\SLA{sl}{2}}$ satisfying $\lambda \ra \mu$.  The relation $\ra$ is obviously not an equivalence relation, but it defines on $\bounded$ the structure of a directed graph: the vertices are the weights of $\bounded$ and we have an edge from $\lambda$ to $\mu$ if and only if $\lambda \ra \mu$.  We shall denote the set of connected components of this graph by $\bounded\big/(\ra)$.

We have the following classification result.
\begin{proposition}[Mathieu \protect{\cite[Thm.~8.6]{MatCla00}}] \label{prop:bij-a}
	There is a bijective correspondence between the set of (equivalence classes of) irreducible semisimple coherent families of $\SLA{sl}{n+1}$-modules and the set $\bounded\big/(\ra)$ of connected components of $\bounded$.  This correspondence sends an irreducible semisimple coherent family $\mdc$ to the set
	\begin{equation}
		\set*{\lam\in\ah^* \st \lam\not\in \wlat_{\ag}^{\ge}\ \text{and}\ \mdl_\lam\subset \mdc} \in \bounded\big/(\ra)
	\end{equation}
	of highest weights of \infdim{} highest-weight submodules of $\mdc$.
\end{proposition}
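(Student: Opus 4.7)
The plan is to exhibit mutually inverse maps between the irreducible semisimple coherent families of $\SLA{sl}{n+1}$-modules and $\bounded/(\ra)$. Forward: given an irreducible semisimple coherent family $\mdc$, \cref{thm:mathieu1}\ref{it:hwsubmod} supplies a simple bounded \hwm{} $\mdh \subset \mdc$ for some choice of Borel; since bounded modules are \infdim{} by definition, the highest weight $\lambda$ of $\mdh$ cannot be dominant integral, so $\lambda \in \bounded \setminus \wlat_{\ag}^{\ge}$, and we send $\mdc$ to the connected component $[\lambda]$. Backward: given $\mu \in \bounded$, the module $\mdl_\mu$ is bounded by the definition of $\bounded$, and so by \cref{thm:mathieu1}\ref{it:cohextension} it embeds into a unique irreducible semisimple coherent family which we denote $\mdc(\mu)$.

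The central technical input, and the main obstacle, is the closure step: whenever $\lambda, \mu \in \bounded$ with $\lambda \ra \mu$, one must show $\mdl_\mu \subset \mdc(\lambda)$. Granted this, uniqueness in \cref{thm:mathieu1}\ref{it:cohextension} forces $\mdc(\mu) = \mdc(\lambda)$, so $\mdc(\cdot)$ is constant on connected components, and dually the set $\set{\nu \in \bounded \setminus \wlat_{\ag}^{\ge} : \mdl_\nu \subset \mdc}$ is closed under the symmetric closure of $\ra$. To establish the closure step, the natural tool is Mathieu's twisted localisation. The condition $i \in A(\lambda)$, namely $\bilin{\lambda+\wvec}{\scroot{i}} \notin \ZZ_{>0}$, blocks the standard \bgg{} embedding of $M(\swref{i} \cdot \lambda)$ into the Verma module $M(\lambda)$, so the submodule $\mdl_{\swref{i} \cdot \lambda}$ must be found elsewhere inside $\mdc(\lambda)$. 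The idea is to invert a power of a root vector that acts non-injectively on $\mdl_\lambda$ to obtain an $\sroot{i}$-bijective bounded extension of $\mdl_\lambda$ still lying inside $\mdc(\lambda)$, and then to shift its support by a non-integral multiple of $\sroot{i}$; the shift preserves membership in the coherent family (up to a finite orbit of twists), and a direct weight computation identifies a new highest-weight vector of weight $\swref{i} \cdot \lambda$. The combinatorial options \ref{it:Aedge}--\ref{it:Adoub} of \cref{prop:bounded-a} are precisely the regimes in which this localise-and-shift procedure keeps the target weight in $\bounded$, so that the resulting \hw{} vector generates a bounded \hwm{} rather than a \fdim{} or unbounded one.

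With the closure step in hand, the rest is essentially formal. That $\set{\nu \in \bounded \setminus \wlat_{\ag}^{\ge} : \mdl_\nu \subset \mdc}$ is exactly \emph{one} component of $(\bounded, \ra)$ rather than several uses \cref{prop:finitelength}\ref{it:fincoh}: each direct summand $\mdc_\tau$ of $\mdc$ has finite length, so any two simple \infdim{} \hw{} submodules of $\mdc$ lying in the same coset of $\rlat_{\ag}$ are linked by a chain of $\ra$-moves among their composition factors, and the Weyl-group symmetry of semisimple coherent families permits one to pass between different cosets inside $\supp{\mdc}$. Finally, injectivity of the forward map follows from the uniqueness clause of \cref{thm:mathieu1}\ref{it:cohextension}, and surjectivity follows from \cref{thm:mathieu1}\ref{it:hwsubmod} applied to any irreducible semisimple coherent family, completing the bijection.
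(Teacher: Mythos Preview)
The paper does not prove this proposition; it is quoted as \cite[Thm.~8.6]{MatCla00} and used as a black box.  There is therefore no proof in the paper to compare your attempt against.

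That said, your sketch has the right architecture (mutually inverse maps built from \cref{thm:mathieu1}\ref{it:cohextension} and \ref{it:hwsubmod}, with the substance in the closure step), but two points deserve correction.  First, your description of twisted localisation is garbled: one cannot invert a root vector that acts \emph{non}-injectively.  Mathieu localises at a root vector that acts \emph{injectively} on $\mdl_{\lambda}$, thereby making it bijective, and then twists by a complex parameter to shift the support inside the same coherent family; the condition $i\in A(\lambda)$ enters because it controls which simple reflections produce new bounded \hwvs{} after the twist, not because one is inverting a non-injective operator.  Second, your argument that the set $\set{\nu\in\bounded : \mdl_{\nu}\subset\mdc}$ is a \emph{single} connected component is incomplete.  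Finite length of the summands $\mdc_{\tau}$ does not by itself furnish $\ra$-edges between arbitrary pairs of bounded highest weights appearing in $\mdc$, and invoking Weyl-group invariance of $\mdc$ to pass between cosets does not produce $\ra$-edges either (the relation $\ra$ is defined via \emph{simple} reflections with $i\in A(\lambda)$, not arbitrary Weyl elements).  Mathieu closes this gap by a counting argument: the size of each connected component is computed independently (this is \cref{prop:connect-a}\ref{it:conncomp-a}), and the number of bounded \hw{} submodules of $\mdc$ is bounded above by the same number, so the closure step forces equality.
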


This shows that irreducible semisimple coherent families of $\SLA{sl}{n+1}$-modules are completely characterised by their bounded \hw{} submodules (and in fact, a single representative will do).  Because all elements of $\cent{\ah}{\ag}$ act polynomially on a given coherent family $\mdc$, it follows that each element of the centre $\cent{\ag}{\ag}$, that is each Casimir operator, acts as a constant on $\mdc$.  In other words, $\mdc$ has a definite central character.  It is therefore natural to ask whether the central character also completely characterises an irreducible semisimple coherent family.  The answer is interesting: ``usually, but not always''.

We recall that two \hwms{} have the same central character if and only if their highest weights are related by the shifted action of $\wgrp$.  Given $\lam \in \bounded$, the question asked above amounts to deciding whether $(\wgrp \cdot \lam) \cap \bounded$ is a single connected component in $\bounded$ or not.
\begin{proposition}[Mathieu \protect{\cite[Lem.~8.3]{MatCla00}}] \label{prop:connect-a}
	\leavevmode
	\begin{enumerate}
		\item If $\lam\in\bounded$ is integral, then the connected component $[\lam]\in\bounded\big/(\ra)$ has $n$ elements.  Otherwise, $[\lam]$ has $n+1$ elements. \label{it:conncomp-a}
		\item The intersection $(\wgrp \cdot \lam) \cap \bounded$ is a single connected component in $\bounded$ \emph{unless} $\lam$ is shifted-regular and integral, in which case it is the union of $n$ connected components.
	\end{enumerate}
\end{proposition}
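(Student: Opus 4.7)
The plan is to work in the coordinate realisation $\ah^{\ast}\cong\{x\in\RR^{n+1}\st\sum x_{i}=0\}$ in which $\lam+\wvec=(\mu_{1},\ldots,\mu_{n+1})$, $a_{i}=\bilin{\lam+\wvec}{\scroot{i}}=\mu_{i}-\mu_{i+1}$, and $\wgrp=S_{n+1}$ acts by permuting the $\mu_{j}$'s.  The simple reflection $\swref{i}\cdot$ transforms $(a_{1},\ldots,a_{n})$ by negating $a_{i}$, adding $a_{i}$ to each of $a_{i-1}$ and $a_{i+1}$ (when defined), and fixing every other $a_{k}$.  An immediate dichotomy from \cref{prop:bounded-a} is that \emph{integral} $\lam\in\bounded$ must have $|A(\lam)|=1$ (because case~(c) there would force a sum of two non-positive integers to be positive), whereas \emph{non-integral} $\lam\in\bounded$ must have $|A(\lam)|\in\{1,2\}$ (because case~(b) would force the sum of an integer and a non-integer to be an integer).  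This dichotomy is precisely what accounts for the difference between $n$ and $n+1$ in part~(a).

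For part~(a) in the integral case with $A(\lam)=\{i\}$, a direct sign analysis of the transformed $a$-values shows that $\swref{i}\cdot\lam\in\bounded$ iff exactly one of $a_{i-1}+a_{i}$ and $a_{i+1}+a_{i}$ fails to lie in $\ZZ_{>0}$, in which case $A(\swref{i}\cdot\lam)$ is the corresponding singleton $\{i\mp 1\}$ (the case~(b) condition is automatic since $a'_{i\pm1}+a'_{i}=a_{i\pm1}\in\ZZ_{>0}$).  Iterating, each integral connected component is a simple path whose vertices realise each singleton $A$-value $\{j\}$ for $j=1,\ldots,n$ exactly once, and so has $n$ elements.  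The non-integral analysis is analogous, but admits the extra possibility that a singleton move lands on a case~(c) doubleton; careful bookkeeping then shows that each component contains a single doubleton ``bridge'' vertex in addition to the $n$ singleton vertices, giving $n+1$ in total.

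For part~(b) I would parametrise the Weyl orbit of a regular $\lam$ by $\sigma\in S_{n+1}$, writing $\lam+\wvec=(\mu_{\sigma(1)},\ldots,\mu_{\sigma(n+1)})$ for a fixed dominant $(\mu_{1}>\cdots>\mu_{n+1})$, so that $a_{i}>0\iff\sigma(i)<\sigma(i+1)$ and $A(\sigma\cdot\lam)$ equals the descent set of $\sigma$ in the integral regular case.  The boundedness criterion then selects the $\sigma$ with a single descent satisfying the case~(b) middle-position constraint, and a direct count yields exactly $n^{2}$ such permutations; by part~(a) these split into $n$ chains of length $n$.  In the non-integral case, the doubleton ``bridge'' vertex is shared across what would otherwise be separate chains in the integral picture, fusing them into a single component of $n+1$ elements.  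In the integral shifted-singular case, the stabiliser collapses chain endpoints (via $\swref{i}\cdot\lam=\lam$ when $a_{i}=0$), again yielding a single component of $n$ elements.

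The hardest step will be verifying that in the integral shifted-regular case the $n$ chains really are pairwise disconnected, i.e.\ that no endpoint (with $A=\{1\}$ or $A=\{n\}$) has a $\bounded$-successor in a different chain.  This requires a finite sign-checking calculation on the transformed $a$-values at each endpoint, in which shifted-regularity ($a_{i}\neq 0$ for all $i$) and integrality together rule out the bridging mechanisms available in the other two cases.
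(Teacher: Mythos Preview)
The paper does not prove this proposition at all: it is stated as a quotation of \cite[Lem.~8.3]{MatCla00} and used without further argument (the $\SLA{sl}{2}$ and $\SLA{sl}{3}$ discussion that follows consists of illustrations, not a proof).  So there is no ``paper's own proof'' to compare against; your proposal already goes well beyond what the paper does.

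That said, your sketch contains a structural error in the non-integral case.  You assert that each connected component consists of ``$n$ singleton vertices'' together with a single doubleton ``bridge''.  But you have just argued (correctly) that for non-integral $\lam\in\bounded$ case~\ref{it:Amid} of \cref{prop:bounded-a} is impossible, which means the \emph{only} singleton $A$-values available are the edge values $\{1\}$ and $\{n\}$.  For $n\ge 3$ you therefore cannot have $n$ singletons in a component.  Tracing the moves directly, the component is actually the path
\[
\{1\}\ \text{---}\ \{1,2\}\ \text{---}\ \{2,3\}\ \text{---}\ \cdots\ \text{---}\ \{n-1,n\}\ \text{---}\ \{n\},
\]
with two edge singletons and $n-1$ doubletons, still giving $n+1$ vertices.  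Your count happens to be right but the stated mechanism is wrong, and the ``single bridge fuses the chains'' picture you invoke for part~(b) in the non-integral case would have to be reworked accordingly.  The integral-case chain analysis and the $n^{2}$ count for part~(b) look plausible as outlines, but they remain sketches; a complete argument would require the detailed combinatorics carried out in Mathieu's paper.
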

\noindent We conclude that an irreducible semisimple coherent family of $\SLA{sl}{n+1}$-modules is completely characterised by its central character unless its  \hw{} submodules have shifted-regular integral highest weights (and if one does, then they all do).

We illustrate these ideas for $\ag=\SLA{sl}{2}$.  In this case, the connected components of $\bounded_{\SLA{sl}{2}} = \ah^* \setminus \wlat_{\SLA{sl}{2}}^{\ge}$ have the form $[\lam] = \set{\lambda}$, if $\lam$ is integral, and $[\lam] = \set{\lam,\swref{1} \cdot \lam}$ otherwise.  The set of connected components of $\bounded_{\SLA{sl}{2}}$ thus decomposes into shifted-regular integral, shifted-singular integral, and non-integral weights as follows:
\begin{equation}
	\bounded_{\SLA{sl}{2}} \big/ (\ra) = \bigcup_{\lam \in \ZZ_{\le-2}} \set{\lam \fwt{1}} \cup \set{-\fwt{1}} \cup \bigcup_{\lam \in \CC \setminus \ZZ} \set{\lam \fwt{1}, -(\lam+2) \fwt{1}}.
\end{equation}
Moreover, the central character always completely characterises the coherent families.  While there exist shifted-regular integral weights in $\bounded_{\SLA{sl}{2}}$ (those with $\lambda \in \ZZ_{\le-2}$), the (partial) $\wgrp$-orbits $(\wgrp \cdot \lam) \cap \bounded_{\SLA{sl}{2}} = \set{\lambda}$ coincide with the connected components in this case, consistent with \cref{prop:connect-a} (because $n=1$).

The case $\ag = \SLA{sl}{3}$ is more typical and we illustrate the set $\bounded_{\SLA{sl}{3}}$ in \cref{fig:sl3sp4} for convenience.  As before, $\bounded_{\SLA{sl}{3}}$ is partitioned into shifted-regular integral, shifted-singular integral, and non-integral weights.  It is easy to see that each non-integral weight $\lam \in \bounded_{\SLA{sl}{3}}$ gives rise to a length-$6$ (shifted) $\wgrp$-orbit whose intersection with $\bounded_{\SLA{sl}{3}}$ consists of three weights and represents one connected component.  The shifted-singular integral weights correspond to the intersections of the red and black lines.  This singularity means that the $\wgrp$-orbit's length is only $3$, but one element necessarily lies outside $\bounded_{\SLA{sl}{3}}$.  The remaining two weights again form a single connected component.  Finally, each shifted-regular integral weight yields a length-$6$ $\wgrp$-orbit whose intersection with $\bounded_{\SLA{sl}{3}}$ has four elements.  Because weights linked by $\ra$ must be related by a simple Weyl reflection, the intersection splits into two connected components of two elements each.

\begin{figure}
	\begin{tikzpicture}[>=stealth,scale=0.6]
		\draw[red,<->] (90:5.5) -- node[pos=0.03,above left] {\textcolor{black}{$\scriptstyle \lam_2$}} (-90:5.5);
		\draw[red,<->] (150:5.5) -- (-30:5.5);
		\draw[red,<->] (30:5.5) -- node[pos=0.01,right] {\textcolor{black}{$\scriptstyle \lam_1$}} (-150:5.5);
		\clip (-5,-5.15) rectangle (5,5.15);
		\foreach \i in {1,...,7}{
			\begin{scope}[on background layer]     
				\clip (-5,-5.15) rectangle (5,5.15); 
				\draw[very thick] ($(30:10) + (0,\i)$) -- ($(-150:10) + (0,\i)$);
				\draw[very thick] ($(150:10) + (0,\i)$) -- ($(-30:10) + (0,\i)$);
				\draw[very thick] ($(90:10) + (30:\i)$) -- ($(-90:10) + (30:\i)$);
			\end{scope}
			\foreach \j in {1,...,7}{
				\node[dwt] at ($(0,\i) + (30:\j)$) {};
				\node[rwt] at ($(0,\i) + (150:\j)$) {};
				\node[rwt] at ($(150:\i) - (30:\j)$) {};
				\node[rwt] at ($(30:\i) - (150:\j)$) {};
				\node[rwt] at ($(-30:\i) - (90:\j)$) {};
				                      }
			                      }
	\end{tikzpicture}
	\hspace{0.05\textwidth}
	\begin{tikzpicture}[>=stealth,scale=0.6]
		\draw[red,<->] (90:8.5) -- node[pos=0.03,above left] {\textcolor{black}{$\scriptstyle \lam_2$}} (-90:2.5);
		\draw[red,<->] (0:8.5) -- (180:2.5);
		\draw[red,<->] (135:3) -- (-45:3);
		\draw[red,<->] (45:10) -- node[pos=0.01,right] {\textcolor{black}{$\scriptstyle \lam_1$}} (-135:3);
		\clip (-2,-2) rectangle (8.25,8.25);
		\foreach \i in {1,...,7}{
			\foreach \j in {1,...,7}{
				\node[dwt] at ($(\i,\i+2*\j)$) {};
				\node[bwt] at ($(\i,\i+2*\j-1)$) {};
				\node[bwt] at ($(\i+2*\j-1,\i)$) {};
				                      }
			                      }
	\end{tikzpicture}
\caption{At left, the set $\bounded_{\SLA{sl}{3}}$, depicted as black lines, in (the real slice of) the weight space.  The white circles correspond to the dominant integral weights, which are not in $\bounded_{\SLA{sl}{3}}$, while the grey circles indicate the shifted-regular integral weights in $\bounded_{\SLA{sl}{3}}$.  The red lines correspond to the shifted Weyl chamber walls.  At right, the set $\bounded_{\SLA{sp}{4}}$, with the same conventions except that $\bounded_{\SLA{sp}{4}}$ is discrete and is therefore represented by the black circles.} \label{fig:sl3sp4}
\end{figure}

\subsection{Type \type{C}}\label{sec:typec}

The situation is somewhat more straightforward for $\ag=\SLA{sp}{2n}$ (with $n\ge 2$).  We fix an ordering of the simple roots in which consecutive roots are connected in the Dynkin diagram, $\sroot{1},\ldots,\sroot{n-1}$ are short and $\sroot{n}$ is long.

\begin{proposition}[Mathieu \protect{\cite[Lems.~9.1 and 9.2]{MatCla00}}] \label{prop:bounded-c}
	For $\ag = \SLA{sp}{2n}$, the set $\bounded=\bounded_\ag$ consists of the elements $\lam\in\ah^*$ which satisfy all of the following conditions:
	\begin{enumerate}
		\item $\bilin{\lam}{\scroot{i}} \in \ZZ_{\ge0}$ for any $i\neq n$.
		\item $\bilin{\lam}{\scroot{n}} \in \ZZ+\frac{1}{2}$.
		\item $\bilin{\lam}{\scroot{n-1}+2\scroot{n}} \in \ZZ_{\ge-2}$.
	\end{enumerate}
\end{proposition}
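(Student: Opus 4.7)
The proof naturally divides into establishing necessity and sufficiency of the three conditions, with the Shale--Weil (oscillator) representation of $\SLA{sp}{2n}$ playing a central role throughout. The oscillator representation has two simple components, each infinite-dimensional and bounded of degree~$1$. Their highest weights satisfy (a)--(c) with condition (c) at its boundary value $-2$, and the overall plan is to show that every bounded $\mdl_\lam$ arises as a composition factor of a tensor product of an oscillator component with a finite-dimensional simple $\SLA{sp}{2n}$-module.

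For necessity, assume $\mdl_\lam$ is bounded. I would apply Jantzen-filtration and Shapovalov-determinant arguments to the Verma module $M(\lam)$ in order to control the weight-space dimensions of the simple quotient $\mdl_\lam$. The integrality condition (a) is forced because the set $\set{i \st \bilin{\lam}{\scroot{i}}\in\ZZ_{\ge 0}}$ of "truncating" directions for $\mdl_\lam$ must coincide with $\set{1,\dots,n-1}$: a strictly larger set renders $\mdl_\lam$ finite-dimensional (contradicting the definition of bounded, which requires infinite-dimensionality), while a strictly smaller set produces weight-multiplicity growth along the missing short direction, again contradicting boundedness. The half-integrality condition (b) then follows from a parity argument: tensoring with any finite-dimensional simple $\SLA{sp}{2n}$-module preserves the parity of $\bilin{\lam}{\scroot{n}}$ modulo $\ZZ$, and the oscillator components are the unique degree-$1$ bounded \hwms{} with non-integer pairing at $\scroot{n}$, so any bounded $\mdl_\lam$ must inherit $\bilin{\lam}{\scroot{n}}\in\ZZ+\tfrac{1}{2}$. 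Finally, condition (c) identifies the exact threshold at which a new singular vector appears in $M(\lam)$ and destroys boundedness; this is established by a direct computation along the direction $\scroot{n-1}+2\scroot{n}$.

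For sufficiency, given $\lam$ satisfying (a)--(c), I would construct $\mdl_\lam$ as the unique simple composition factor containing a vector of highest weight $\lam$ inside a tensor product $\mathcal{W}\otimes F(\mu)$, where $\mathcal{W}$ is the oscillator component determined by the parity class from (b) and $F(\mu)$ is a finite-dimensional simple $\SLA{sp}{2n}$-module chosen, using (a) to set its short-root Dynkin labels and the residual information in (b)--(c) to set its long-root label, so that the maximal weight of the tensor product equals $\lam$. Boundedness is then automatic: it is preserved under tensoring with finite-dimensional modules and under passage to composition factors, and condition (c) guarantees that the targeted composition factor is infinite-dimensional rather than being absorbed into a finite-dimensional factor.

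The principal obstacle is the sharpness of condition (c). Conditions (a) and (b) follow from general structural and parity considerations, but (c) specifies an exact numerical threshold at $-2$ beyond which the bounded structure of $\mdl_\lam$ collapses. Verifying both necessity and sufficiency at this threshold demands an explicit Jantzen-filtration analysis along the direction $\scroot{n-1}+2\scroot{n}$, together with the identification of a singular vector in $M(\lam)$ appearing precisely when $\bilin{\lam}{\scroot{n-1}+2\scroot{n}}<-2$. This boundary analysis is the technically heaviest step; the remaining ingredients are essentially combinatorial reductions to the already-understood type-\type{A} case (\cref{prop:bounded-a}).
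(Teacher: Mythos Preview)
The paper does not contain a proof of this proposition. It is quoted verbatim from Mathieu's article \cite{MatCla00} (specifically Lemmas~9.1 and~9.2 there), exactly as the other results in \cref{sec:ac} are quoted without proof. There is therefore no proof in the present paper to compare your proposal against.

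That said, a brief comment on the substance of your sketch relative to Mathieu's actual argument. Your necessity argument for condition~(b) is circular: you deduce the half-integrality of $\bilin{\lam}{\scroot{n}}$ by asserting that every bounded $\mdl_\lam$ arises inside a tensor product of an oscillator component with a finite-dimensional module, but that containment is essentially what the proposition is meant to establish. Mathieu's route is different in an important way: his Lemma~9.1 first proves that every simple bounded $\SLA{sp}{2n}$-module has degree~$1$, using Gelfand--Kirillov dimension and the geometry of the minimal nilpotent orbit rather than Jantzen-filtration estimates. Once the degree-$1$ constraint is in hand, the identification of the admissible highest weights (his Lemma~9.2) becomes a direct weight-multiplicity computation, and conditions (a)--(c) drop out together. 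Your proposed ``singular vector appearing precisely when $\bilin{\lam}{\scroot{n-1}+2\scroot{n}}<-2$'' is not the mechanism; the threshold in~(c) instead reflects where the degree-$1$ Weyl-type character formula ceases to describe an infinite-dimensional module.
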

\noindent Note that $\bounded$ is clearly discrete in this case.  We illustrate $\bounded_{\SLA{sp}{4}}$ in \cref{fig:sl3sp4} (right).

For $\lam,\mu\in \bounded$, we write $\lam\ra \mu$ if $\mu = \swref{n} \cdot \lam$, where we recall that the Weyl group of $\SLA{sp}{2n}$ is $\wgrp \cong \grp{S}_n \ltimes \ZZ_2^n$.
\begin{proposition}[Mathieu \protect{\cite[Thm.~9.3]{MatCla00}}] \label{prop:connect-c}
	\leavevmode
	\begin{enumerate}
		\item There is a bijective correspondence between the set of (equivalence classes of) irreducible semisimple coherent families of $\SLA{sp}{2n}$-modules and the set $\bounded\big/(\ra)$ of connected components in $\bounded$.  This correspondence sends an irreducible semisimple coherent family $\mdc$ to the set
	\begin{equation}
		\set*{\lam\in\ah^* \st \lam\notin \wlat_{\ag}^{\ge}\ \text{and}\ \mdl_\lam\subset \mdc} \in \bounded\big/(\ra)
	\end{equation}
	of highest weights of \infdim{} highest-weight submodules of $\mdc$.
	\item Every connected component $[\lam] \in \bounded \big/ (\ra)$ has $2$ elements and the intersection $(\wgrp \cdot \lam) \cap \bounded$ is always a single connected component in $\bounded$.
	\end{enumerate}
\end{proposition}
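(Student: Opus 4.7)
The plan is to follow the same schema used for type~A (\cref{prop:bij-a}), reducing the classification to a combinatorial analysis of $\bounded = \bounded_{\SLA{sp}{2n}}$ as described by \cref{prop:bounded-c}. By \cref{thm:mathieu1}\ref{it:cohextension}, every simple bounded \hw{} module $\mdl_{\lam}$ with $\lam \in \bounded$ embeds into a unique irreducible semisimple coherent family $\mdc(\lam)$, while \cref{thm:mathieu1}\ref{it:hwsubmod} shows that every irreducible semisimple coherent family arises as $\mdc(\lam)$ for some $\lam \in \bounded$ (the Weyl-invariance of coherent families recalled above eliminates any dependence on the choice of Borel). This already yields a well-defined surjection from $\bounded$ onto the set of equivalence classes of irreducible semisimple coherent families, and the uniqueness part of \cref{thm:mathieu1}\ref{it:cohextension} will give injectivity on its fibres.

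To identify those fibres, I would argue that each Casimir $z \in \cent{\ag}{\ag}$ acts polynomially on a coherent family $\mdc$ (by \cref{def:coh}) and as a scalar on each of its simple direct summands (by Schur), hence as a single scalar on $\mdc$ itself. Equivalently, $\mdc$ has a well-defined central character, and the standard identification of central characters with shifted Weyl orbits gives $\mdc(\lam) = \mdc(\mu) \implies \mu \in \wgrp \cdot \lam$. The problem therefore reduces to computing $(\wgrp \cdot \lam) \cap \bounded$ for each $\lam \in \bounded$ and comparing it with the equivalence class $[\lam] \in \bounded \big/ (\ra)$.

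For this combinatorial step, I would work in the standard orthonormal realisation, writing $\lam + \wvec = \sum_i \mu_i \eps_i$. The three items of \cref{prop:bounded-c} translate to: $\mu_1 > \mu_2 > \cdots > \mu_n$ with integer gaps $\geq 1$, $\mu_n \in \ZZ + \tfrac{1}{2}$ (so all $\mu_i$ are half-integers), and $\mu_{n-1} + \mu_n \geq 1$. Combining the first and third conditions forces $\mu_{n-1} \geq 3/2$ and $\lvert \mu_n \rvert < \mu_{n-1}$, so the multiset $\set{\lvert \mu_1 \rvert, \ldots, \lvert \mu_n \rvert}$ consists of a unique ``small'' element $\lvert \mu_n \rvert$ together with a strictly decreasing sequence of ``large'' elements $\mu_1 > \cdots > \mu_{n-1}$. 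Since $\wgrp \cong \grp{S}_n \ltimes \ZZ_2^n$ acts on $\mu$ by signed permutations, requiring that $w \mu$ again satisfy all three conditions forces the large values to occupy the first $n-1$ positions in their original positive order, leaving only the sign at the last coordinate free; this sign flip is precisely the action of $\swref{n}$. Verifying directly that $\swref{n} \cdot \lam$ satisfies all three conditions and differs from $\lam$ (since $\mu_n$ is a nonzero half-integer) gives $(\wgrp \cdot \lam) \cap \bounded = \set{\lam, \swref{n} \cdot \lam}$, simultaneously establishing that every connected component of $\bounded$ under $\ra$ has exactly two elements and that each Weyl orbit meets $\bounded$ in a single component. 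The bijectivity of the map $\mdc \mapsto \set*{\lam \in \ah^* \st \lam \notin \wlat_{\ag}^{\ge}\ \text{and}\ \mdl_\lam \subset \mdc}$ then follows.

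The main obstacle is the orbit analysis just outlined: one must carefully case-split on where $\lvert \mu_n \rvert$ lands after the signed permutation and on the sign pattern in the first $n-1$ positions, showing that in each case other than $\set{e, \swref{n}}$ either the strict-decrease condition $\nu_i - \nu_{i+1} \geq 1$ or the inequality $\nu_{n-1} + \nu_n \geq 1$ fails. Compared to type~A, the rigidity imposed by the distinguished half-integer coordinate $\mu_n$ is what forces all connected components to have the uniform size $2$ and each Weyl orbit to produce a single component, in contrast to the dichotomy of \cref{prop:connect-a}.
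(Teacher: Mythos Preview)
The paper does not supply its own proof of this proposition; it is quoted verbatim from Mathieu \cite[Thm.~9.3]{MatCla00}. So there is no in-paper argument to compare against, and the question is whether your sketch stands on its own.

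Your combinatorial analysis for part~(b) is correct and cleanly done: translating \cref{prop:bounded-c} into the $\eps$-coordinates of $\lam+\wvec$, you correctly deduce $\mu_1>\cdots>\mu_{n-1}>\abs{\mu_n}$ with all $\mu_i\in\ZZ+\tfrac12$, and the signed-permutation case analysis indeed forces $w\in\set{e,\swref{n}}$, so $(\wgrp\cdot\lam)\cap\bounded=\set{\lam,\swref{n}\cdot\lam}=[\lam]$.

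The gap is in part~(a). Your central-character argument only gives one containment: if $\mdl_\lam,\mdl_\mu\subset\mdc$ then $\mu\in\wgrp\cdot\lam$, hence the fibre $\set{\lam\in\bounded:\mdl_\lam\subset\mdc}$ is \emph{contained in} $[\lam]$. You never establish the reverse inclusion, namely that $\mdl_\lam$ and $\mdl_{\swref{n}\cdot\lam}$ actually lie in the \emph{same} irreducible semisimple coherent family. Without this, your map from coherent families to $\bounded/(\ra)$ is well-defined and surjective but could be two-to-one: nothing in your argument excludes the possibility that $\mdc(\lam)\neq\mdc(\swref{n}\cdot\lam)$ with both mapping to $[\lam]$. (Indeed, the type-\type{A} situation of \cref{prop:connect-a}, where a single Weyl orbit can split into several connected components each labelling a distinct family, shows that the central character alone does not in general pin down the family.) The definition of $\ra$ is precisely engineered so that $\lam\ra\mu$ implies $\mdc(\lam)=\mdc(\mu)$, but proving this implication is the substantive step in Mathieu's argument---typically via twisted localisation or an explicit analysis of the bounded modules---and it is absent from your sketch.
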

\noindent We conclude that an irreducible semisimple coherent family of $\SLA{sp}{2n}$-modules is always completely characterised by its central character.

\section{The combinatorics of classifying weight modules} \label{sec:refine}

To apply our classification in concrete examples, we first need to clarify which (infinite-dimensional) \hw{} $\ag$-modules appear in any given parabolic family.  Recall that the Weyl group $\wgrp$ of $\ag$ is generated by the reflections $\wref{\alpha}$, $\alpha \in \roots{\ag}$.  We recall the definition of the small Weyl group, following \cite{DimStr00}.
\begin{definition}
	Given a simple weight $\ag$-module $\mdm$, let $\nilroots{\mdm}$ denote the set of roots $\alpha \in \roots{\ag}$ whose positive and negative root vectors act locally nilpotently on $\mdm$.  The \emph{small Weyl group} $\swgrp{\mdm}$ of $\mdm$ is then the subgroup of $\wgrp$ generated by the $\wref{\alpha}$ with $\alpha \in \nilroots{\mdm}$.
\end{definition}
\noindent The small Weyl group of a simple dense $\ag$-module is therefore trivial because all root vectors act injectively.  It is easy to see that the action of a root vector on any simple $\ag$-module is either injective or locally nilpotent (the set of vectors on which the action is locally nilpotent is a submodule).

An easy way to appreciate the small Weyl group is to look at the case in which $\mdm$ is a simple \hw{} $\SLA{sl}{2}$-module, with respect to some Borel subalgebra $\ab$.  Then, there are two possibilities:
\begin{enumerate}
	\item $\mdm$ is \fdim{}, so $\nilroots{\mdm} = \roots{\SLA{sl}{2}}$ and $\swgrp{\mdm} = \wgrp \cong \ZZ_2$.
	\item $\mdm$ is \infdim{}, so $\nilroots{\mdm} = \varnothing$ and $\swgrp{\mdm} = \wun$.
\end{enumerate}
There are of course $\abs{\wgrp} = 2$ choices of Borel (containing our fixed Cartan subalgebra $\ah$).  In the first case, $\mdm$ is \hw{} with respect to either choice of Borel; in the second, only one choice makes $\mdm$ \hw{}.  Now consider this from the perspective of the parabolics (in this case, Borels).  A simple \hwm{} $\mdm$ has the form $\pind{\ab} \CC_{\lam}$, for some simple $\ah$-module $\CC_{\lambda}$ ($\lambda$ is the highest weight of $\mdm$), and some Borel $\ab$.  If $\mdm$ is \fdim{}, then it is also \hw{} with respect to the other Borel $\wref{}(\ab)$, though its highest weight is no longer $\lambda$ but $\wref{}(\lambda) = -\lambda$.  Thus, $\pind{\ab} \CC_{\lam} \cong \mdm \cong \pind{\wref{}(\ab)} \CC_{\wref{}(\lam)}$ when $\wref{} \in \swgrp{\mdm}$.

In general, the small Weyl group describes exactly this lack of uniqueness in representing a simple module through parabolic induction.  Recall from \cref{thm:fernando} that every simple weight $\ag$-module $\mdm$, with \fdim{} weight spaces, has the form $\mdm\cong \pind{\ap}\mdn$, for some parabolic subalgebra $\ap \subseteq \ag$ and some simple dense module $\mdn$ over the Levi factor $\al$ of $\ap$.
\begin{lemma}[Dimitrov--Mathieu--Penkov \protect{\cite[Thm.~6.1]{DimStr00}}] \label{lem:dimitrov}
	Given a simple weight $\ag$-module $\mdm$, with \fdim{} weight spaces, the choice of $(\ap,\mdn)$ is unique up to the action of the small Weyl group $\swgrp{\mdm}$.
\end{lemma}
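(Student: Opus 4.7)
The plan is to first observe that, given the parabolic $\ap$ in a Fernando decomposition $\mdm \cong \pind{\ap}\mdn$, the dense module $\mdn$ is recoverable from $\mdm$ via $\mdn \cong \uinv{\ap}\mdm$, by \cref{prop:functors}\ref{it:functinv}. So the entire statement reduces to showing that the set of \emph{admissible} parabolics (those appearing in some Fernando decomposition of $\mdm$) forms a single orbit under the action of $\swgrp{\mdm}$, with the accompanying module transforming compatibly by transport of structure.

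To see that $\swgrp{\mdm}$ does act on admissible pairs, I would use the defining property of $\nilroots{\mdm}$: for any $\alpha \in \nilroots{\mdm}$, the vectors $e^{\pm\alpha}$ act locally nilpotently on $\mdm$, so the $\SLA{sl}{2}$-triple they generate exponentiates to a local $\SLG{SL}{2}$-action on $\mdm$. This produces an automorphism of $\mdm$ that implements the simple reflection $\swref{\alpha}$, and it maps a Fernando decomposition $(\ap,\mdn)$ to the decomposition $(\swref{\alpha}\ap,\mdn^{\swref{\alpha}})$. Iterating over generators of $\swgrp{\mdm}$ yields the full action.

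For the transitivity claim, let $(\ap,\mdn)$ and $(\ap',\mdn')$ be two admissible pairs. The intrinsic root set $\Phi_\mdm^+ = \{\alpha\in\roots{\ag} \st e^\alpha \text{ acts locally finitely on } \mdm\}$ depends only on $\mdm$. Using that $\mdn$ is simple and dense over $\al$, root vectors indexed by $\roots{\al}$ act injectively on $\mdn$ and (by the \pbw{} description of the induced module) also on $\mdm$, while root vectors indexed by $\roots{\au}$ act locally nilpotently on $\mdm$ since they annihilate $\wun\otimes\mdn$. Combined with the analogous assertions for $(\ap',\mdn')$, this pins down both $\roots{\au}$ and $\roots{\au'}$ as specific ``polarisations'' of the asymmetric part of $\Phi_\mdm^+$, while the symmetric part is exactly $\nilroots{\mdm}$. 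A Weyl element $w \in \wgrp$ carrying $\ap$ to $\ap'$ must preserve $\Phi_\mdm^+$ and its symmetric/asymmetric decomposition; standard parabolic combinatorics then force $w$ to be a product of reflections in $\nilroots{\mdm}$, hence $w \in \swgrp{\mdm}$. The identification $\mdn' \cong \mdn^w$ follows from the $w$-equivariance of the $\uinv{\blank}$ functor.

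The main obstacle is the precise combinatorial identification in the previous paragraph: one must show that any admissible $\ap$ has $\roots{\au}$ lying inside a fixed ``positive system'' for the root subsystem $\Phi_\mdm^+ \setminus \nilroots{\mdm}$, so that all admissible parabolics differ only by a choice of positive system for $\nilroots{\mdm}$. This requires delicate bookkeeping to rule out the possibility that some root in $\roots{\au'}$ lies outside $\Phi_\mdm^+$, which would obstruct the Weyl element $w$ from belonging to $\swgrp{\mdm}$; ruling this out uses both the denseness of $\mdn'$ over $\al'$ and the injectivity properties inherited from the $\pind{\blank}$ construction.
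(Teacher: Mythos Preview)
The paper does not prove this \lcnamecref{lem:dimitrov}; it is quoted directly from \cite[Thm.~6.1]{DimStr00} and used as a black box. There is therefore no ``paper's own proof'' to compare against.

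Your outline is a reasonable sketch of the argument in \cite{DimStr00}, and the key structural insight --- that the Levi root system $\roots{\al}$ coincides with the intrinsic set $\set{\alpha \in \roots{\ag} \st e^{\alpha}\ \text{and}\ e^{-\alpha}\ \text{both act injectively}}$, while $\nilroots{\mdm}$ is its ``locally nilpotent'' complement inside $\roots{\au} \cup \roots{\au^-}$ --- is exactly right. This already forces any two admissible parabolics to share the same Levi factor, so the ambiguity is confined to the choice of $\roots{\au}$.

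Two points deserve more care than your sketch gives them. First, you assert the existence of some $w \in \wgrp$ carrying $\ap$ to $\ap'$ before arguing that $w \in \swgrp{\mdm}$; but the existence of such a $w$ is not automatic and is essentially equivalent to the Levi factors agreeing, which you need to establish first (as above). Second, the step you flag as the ``main obstacle'' really is the crux: one must show that $\nilroots{\mdm}$ is a genuine root subsystem of $\roots{\ag}$ (so that $\swgrp{\mdm}$ is a reflection subgroup acting simply transitively on its positive systems) and that $\roots{\au}$ decomposes as $\Phi^{\textup{asym}} \sqcup (\nilroots{\mdm})^+$ for some choice of positive system on $\nilroots{\mdm}$. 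Neither fact is ``standard parabolic combinatorics''; both require the cone arguments of \cite{FerLie90,DimStr00} (compare the use of \cite[Lem.~5.1(iii)]{DimStr00} in the proof of \cref{prop:swg} below). Once those are in hand, transitivity of $\swgrp{\mdm}$ on admissible parabolics is immediate.
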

\noindent In other words, if we also have $\mdm\cong \pind{\ap'}\mdn'$ for some parabolic $\ap' \subseteq \ag$ and some simple dense module $\mdn'$ over the Levi factor of $\ap'$, then there exists $\wref{} \in \swgrp{\mdm}$ such that $\ap'=\wref{}(\ap)$ and $\mdn'\cong \wref{}(\mdn)$.

At this point, it is convenient to describe an often more practical means of computing the small Weyl group of a simple weight $\ag$-module $\mdm$.  Let $\ap \subseteq \ag$ be a parabolic subalgebra with Levi factor $\al$.  We choose a set of simple roots $\sroots{\ag}$ of $\ag$ such that the corresponding root vectors all belong to $\ap$.  This ensures, in particular, that $\sroots{\ag}$ includes a set $\sroots{\al}$ of simple roots of $\al$.  Define $\sroots{\al}^{\perp}$ to be the subset of $\sroots{\ag}$ consisting of the simple roots that are orthogonal to those of $\al$.  The simple coroot $\scroot{i}$ corresponding to any $\sroot{i} \in \sroots{\al}^{\perp}$ therefore acts on $\uinv{\ap} \mdm$ as multiplication by $\lambda_i = \bilin{\lambda}{\scroot{i}}$, where $\lambda$ is any weight of $\supp{\uinv{\ap} \mdm}$, by \cref{prop:functors}\ref{it:functlex} and Schur's lemma.
\begin{proposition}[Mathieu \protect{\cite[Prop.~1.3(ii)]{MatCla00}}] \label{prop:swg}
	Suppose that $\mdm \cong \pind{\ap} \mdn$, for some simple dense $\al$-module $\mdn$.  Then, the small Weyl group $\swgrp{\mdm}$ is the subgroup of $\wgrp$ generated by the simple Weyl reflections $\swref{i}$ with $\sroot{i} \in \sroots{\al}^{\perp}$ and $\lambda_i \in \NN$.
\end{proposition}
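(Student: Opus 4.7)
The plan is to establish the equality by verifying both containments separately.

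For the forward containment $\swgrp{\mdm} \supseteq \langle \swref{i} : \sroot{i} \in \sroots{\al}^\perp,\ \lambda_i \in \NN\rangle$, I will fix $\sroot{i} \in \sroots{\al}^\perp$ with $\lambda_i \in \NN$ and show that $\sroot{i} \in \nilroots{\mdm}$. Since $\sroot{i} \in \sroots{\al}^\perp$ forces $\sroot{i} \notin \roots{\al}$, the root vector $e^{\sroot{i}}$ lies in $\au$ and annihilates the embedded copy $\wun \otimes \mdn \subset \mdm$; thus $e^{\sroot{i}}$ fails to be injective and so acts locally nilpotently by the standard dichotomy for simple modules.  For $e^{-\sroot{i}}$, the orthogonality $\sroot{i} \perp \sroots{\al}$ places $h_{\sroot{i}}$ in the centre of $\al$ and makes the $\SLA{sl}{2}$-triple $\langle e^{\sroot{i}}, e^{-\sroot{i}}, h_{\sroot{i}}\rangle$ commute with $\as$.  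Consequently $\al' = \al + \CC e^{\sroot{i}} + \CC e^{-\sroot{i}}$ is the Levi factor of a parabolic $\ap' \supseteq \ap$, and applying the transitivity of parabolic induction (suitably combined with the almost-simple quotient) yields an isomorphism $\mdm \cong \pind{\ap'}\brac{\mdn \otimes L(\lambda_i)}$, where $L(\lambda_i)$ denotes the simple $(\lambda_i+1)$-dimensional $\SLA{sl}{2}$-module of highest weight $\lambda_i$ for the $\sroot{i}$-triple.  On the embedded $\wun \otimes (\mdn \otimes L(\lambda_i)) \subset \mdm$ the operator $e^{-\sroot{i}}$ acts nilpotently of order $\lambda_i+1$, hence is not injective on $\mdm$ and must again be locally nilpotent.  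This will establish $\sroot{i} \in \nilroots{\mdm}$ and thus $\swref{i} \in \swgrp{\mdm}$.

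For the reverse containment I will first note that $\nilroots{\mdm}$ is a root subsystem of $\roots{\ag}$: closure under negation is immediate, while closure under addition follows from the standard Jacobi argument that the commutator of a locally nilpotent operator with a locally ad-nilpotent one (which $e^\alpha$ always is on the finite-dimensional $\ag$) remains locally nilpotent.  Therefore $\swgrp{\mdm}$ is the Weyl group of this subsystem, and it suffices to show that $\nilroots{\mdm}$ is contained in the root subsystem generated by $\{\sroot{i} \in \sroots{\al}^\perp : \lambda_i \in \NN\}$.  I will do this in three steps.  First, $\nilroots{\mdm} \cap \roots{\al} = \varnothing$, because density of $\mdn$ forces every $e^\alpha$ with $\alpha \in \roots{\al}$ to be injective on $\mdn$ and hence on the simple module $\mdm$.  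Second, every positive $\alpha \in \nilroots{\mdm}$ satisfies $\alpha \perp \sroots{\al}$: for any $\mu \in \supp{\mdn}$ the vector $v_\mu \in \wun \otimes \mdn$ generates the $\SLA{sl}{2}$-Verma $M(\bilin{\mu}{\alpha^\vee})$ for the $\alpha$-triple inside $\mathrm{Ind}_\ap^\ag \mdn$, and whenever $\bilin{\mu}{\alpha^\vee} \notin \NN$ this Verma is irreducible and therefore embeds in $\mdm$ (the alternative image $0$ is ruled out by $v_\mu \neq 0$ in $\mdm$), contradicting local nilpotence of $e^{-\alpha}$; thus $\bilin{\mu}{\alpha^\vee} \in \NN$ throughout $\supp{\mdn} = \zeta + \rlat_\al$, and an arithmetic-progression argument forces $\bilin{\rlat_\al}{\alpha^\vee} = 0$.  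Third, the simple-root expansion of such an $\alpha$ can only involve $\sroot{j}$ with $\lambda_j \in \NN$: otherwise the same transitivity argument applied with $\al' = \al + \CC e^{\sroot{j}} + \CC e^{-\sroot{j}}$ rewrites $\mdm \cong \pind{\ap'}\brac{\mdn \otimes M(\lambda_j)}$, where $M(\lambda_j)$ is irreducible because $\lambda_j \notin \NN$; the quantity $\bilin{\mu}{\alpha^\vee}$ now varies linearly in the $\sroot{j}$-direction of $\supp{\mdn \otimes M(\lambda_j)}$ and must leave $\NN$, yielding an irreducible $\alpha$-Verma that embeds in $\mdm$ and contradicts local nilpotence of $e^{-\alpha}$.

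The main obstacle I expect is making the transitivity of the almost-simple-quotient functor $\pind{\cdot}$ across intermediate Levis genuinely rigorous, since $\pind{\cdot}$ is not manifestly a composition and requires a direct identification of the maximal submodules having zero intersection with the respective top subspaces.  A secondary subtlety arises in the third step when $\bilin{\sroot{j}}{\alpha^\vee}$ happens to vanish despite the $\sroot{j}$-coefficient of $\alpha$ being non-zero; there the arithmetic-progression degenerates, and I will instead use that $\swgrp{\mdm}$ acts on $\nilroots{\mdm}$ by conjugation via the integrated $\SLA{sl}{2}$-groups of simple roots $\sroot{k}$ with $\lambda_k \in \NN$ (these being locally nilpotent on $\mdm$) to reflect $\alpha$ into a lower-height root in which the offending $\sroot{j}$-coefficient becomes isolable, reducing either to a contradiction with the first step or to the already-treated regime.
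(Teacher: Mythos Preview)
Your overall strategy differs from the paper's: both split into two containments and both obtain the orthogonality $\alpha\perp\roots{\al}$ for $\alpha\in\nilroots{\mdm}$ essentially as you do in Step~2, but thereafter the paper proceeds by induction on the height of $\alpha$, invoking the cone/lattice lemma \cite[Lem.~5.1(iii)]{DimStr00} to eliminate the cases where a summand $\beta$ of $\alpha=\beta+\gamma$ has $f^\beta$ acting injectively.  Your substitute, the transitivity-of-$\pind{\cdot}$ argument, is an interesting idea and your forward containment is fine (transitivity for \emph{simple} tops reduces to uniqueness of simple quotients of generalised Verma modules, which is unproblematic).  The reverse containment, however, has two genuine problems.

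First, your ``Jacobi argument'' for closure of $\nilroots{\mdm}$ under addition is false as stated: the commutator of a locally nilpotent operator with an ad-nilpotent one need not be locally nilpotent (on a Verma $\SLA{sl}{3}$-module, $e^{\sroot{1}}$ is locally nilpotent and $e^{-\sroot{1}-\sroot{2}}$ is ad-nilpotent, yet their commutator $e^{-\sroot{2}}$ acts injectively).  The correct reason $\nilroots{\mdm}$ is closed is that each $\alpha\in\nilroots{\mdm}$ integrates, so $\wref{\alpha}$ acts on $\mdm$ and conjugates root vectors; closure then follows from $\wref{\beta}(\alpha)=\alpha+\beta$ when $\alpha+\beta\in\roots{\ag}$.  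This is repairable.

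Second, and more seriously, Step~3 only treats simple roots $\sroot{j}\in\sroots{\al}^{\perp}$ with $\lambda_j\notin\NN$: your construction $\al'=\al+\CC e^{\sroot{j}}+\CC e^{-\sroot{j}}$ is the Levi of a parabolic (and decomposes as $\al\oplus\SLA{sl}{2}^{(j)}$) only when $\sroot{j}\perp\sroots{\al}$.  But orthogonality of $\alpha$ to $\roots{\al}$ (Step~2) does \emph{not} force the simple-root expansion of $\alpha$ to avoid $\sroots{\al}$ or the simple roots adjacent to $\sroots{\al}$.  For instance, in $\SLA{so}{8}$ with $\sroots{\al}=\{\sroot{1}\}$, the highest root $\theta=\sroot{1}+2\sroot{2}+\sroot{3}+\sroot{4}$ satisfies $\theta\perp\sroot{1}$, yet its expansion involves both $\sroot{1}\in\sroots{\al}$ and the adjacent $\sroot{2}\notin\sroots{\al}^{\perp}$.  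Your argument gives no mechanism to exclude such contributions, and the ``reflection to lower height'' fallback cannot help here either, since the only reflections you have established lie in $\langle\swref{i}:\sroot{i}\in\sroots{}(\mdm)\rangle$ and these fix the $\sroot{1}$- and $\sroot{2}$-coefficients of $\alpha$.  This is precisely the obstruction that the paper's height induction together with \cite[Lem.~5.1(iii)]{DimStr00} is designed to overcome, and your proposal currently has nothing to replace it.
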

\begin{proof}
	As we were not able to find a detailed proof of this useful result in the literature, we provide one for the reader's convenience.  Let $\sroots{}(\mdm)$ denote the set of $\sroot{i} \in \sroots{\al}^{\perp}$ for which $\lambda_i \in \NN$.  This set of simple roots corresponds to a Lie subalgebra $\ag(\mdm)$ of $\ag$ with root system $\roots{}(\mdm)$.  We shall prove the \lcnamecref{prop:swg} by showing that $\roots{}(\mdm) = \nilroots{\mdm}$.

	Suppose first that $\alpha$ is a positive root in $\nilroots{\mdm}$.  As the root vectors of $\al$ act injectively on the simple dense $\al$-module $\uinv{\ap}{\mdm} \subset \mdm$, we must have $e^{\alpha} \in \au$.  Thus, $e^{\alpha}$ annihilates any weight vector $v \in \uinv{\ap} \mdm$.  If $\lambda$ is the weight of $v$, then the action of $f^{\alpha}$ on $v$ is only locally nilpotent when $\bilin{\lambda}{\coroot{\alpha}} \in \NN$.

	If there exists $\beta \in \roots{\al}$ with $\bilin{\beta}{\coroot{\alpha}} \neq 0$, then without loss of generality we may assume that this quantity is positive.  Since $f^{\beta}$ acts injectively, it follows that $(f^{\beta})^n v$ is a non-zero element of the $\al$-module $\uinv{\ap}{\mdm}$, for any $n \in \NN$.  The actions of $e^{\alpha}$ and $f^{\alpha}$ on $(f^{\beta})^n v = 0$ are therefore zero and locally nilpotent, respectively, for all $n \in \NN$, hence we must have $\bilin{\lambda - n \beta}{\coroot{\alpha}} \in \NN$ for all $n \in \NN$.  Since $\bilin{\beta}{\coroot{\alpha}} > 0$, this is a contradiction, proving that $\bilin{\beta}{\coroot{\alpha}} = 0$ for all $\beta \in \roots{\al}$.

	It remains to show that $\alpha$ is a linear combination of simple roots in $\roots{}(\mdm)$.  For this, we induct over the height of $\alpha$.  If the height is $1$, then $\alpha$ is simple and the conditions established above prove that it is in $\roots{}(\mdm)$.  We may therefore assume that the height of $\alpha$ is greater than $1$, so $\alpha = \beta + \gamma$ for some $\beta, \gamma \in \roots{\ag}$, and that the statement has been proven for all roots in $\nilroots{\mdm}$ of lower height than $\alpha$.  There are three cases to consider:
	\begin{enumerate}[label=\arabic*)]
		\item $f^{\beta}$ and $f^{\gamma}$ act injectively on $\mdm$.  Then, $-\alpha = -\beta - \gamma \in C_{\mdm}$, the cone (monoid) generated by the roots whose root vectors act injectively on $\mdm$.  But, this contradicts \cite[Lem.~5.1(iii)]{DimStr00} which says that $C_{\mdm}$ has trivial intersection with the root lattice $\nilrlat{\mdm}$ generated by $\nilroots{\mdm}$.  This case is therefore impossible.
		\item $f^{\beta}$ acts injectively on $\mdm$ whilst $f^{\gamma}$ acts locally nilpotently.  But then, $-\beta = \gamma - \alpha \in \nilrlat{\mdm}$ which again contradicts \cite[Lem.~5.1(iii)]{DimStr00} and is thus impossible.
		\item $f^{\beta}$ and $f^{\gamma}$ act locally nilpotently on $\mdm$.  Then, $\beta, \gamma \notin \roots{\al}$ so $e^{\beta}, e^{\gamma} \in \au$ and so $\beta, \gamma \in \nilroots{\mdm}$.  By induction, $\beta$ and $\gamma$ are linear combinations of simple roots in $\roots{}(\mdm)$ and hence so is $\alpha$.
	\end{enumerate}
	This establishes that $\nilroots{\mdm} \subseteq \roots{}(\mdm)$.

	For the opposite inclusion, note that it is enough to prove that $e^{\alpha}$ and $f^{\alpha}$, for $\alpha \in \roots{}(\mdm)$ positive, act locally nilpotently on a single non-zero vector $v \in \mdm$, since $\mdm$ is simple (\cref{prop:functors}\ref{it:functrex}).  Noting that $\alpha$ being orthogonal to $\roots{}(\mdm)$ implies that $e^{\alpha} \in \au$, we may take $v \in \uinv{\ap}{\mdm}$ so that $e^{\alpha} v = 0$.

	To show that $f^{\alpha}$ acts locally nilpotently on $v$ requires more work.  We will actually show something a little stronger, namely that the $\ag(\mdm)$-module $\mdd$ generated from $v$ is \fdim.  Let $\lambda$ be the weight of $v$ and recall that $\lambda_i \in \NN$ for all $\sroot{i} \in \sroots{}(\mdm)$.  We will prove that the only $w \in \mdd$ which are annihilated by the $e^{\beta}$, with $\beta \in \roots{}(\mdm)$ positive, are multiples of $v$.

	Let us therefore assume that such a $w$ exists, but is not a multiple of $v$.  It is therefore not in $\uinv{\ap}{\mdm}$.  However, as $\mdm$ is simple (as a $\ag$-module), there exists $U \in \envalg{\ag}$ such that $Uw=v$.  We can write $U$ as a linear combination of monomials which are ordered as follows: root vectors of $\au^-$ appear to the left of negative root vectors of $\al$, which appear to the left of positive root vectors of $\al$; finally, root vectors of $\au$ appear to the right.  As $v \in \uinv{\ap}{\mdm}$, we may in fact assume that no root vectors of $\au^-$ appear.  Similarly, as $w \notin \uinv{\ap}{\mdm}$, we may assume that at least one root vector of $\au$ appears in each monomial.  This may be sharpened by partitioning the positive roots of $\ag$ into those of $\al$, those of $\ag(\mdm)$ and the remainder $\tilde{\roots{}}{\mspace{-2mu}}^+$.  As the positive root vectors of $\ag(\mdm)$ annihilate $w$ by hypothesis, we may thus assume that at least one $e^{\beta}$, with $\beta \in \tilde{\roots{}}{\mspace{-2mu}}^+$, appears in each monomial.

	It follows from the orthogonality of $\sroots{\al}$ and $\sroots{}(\mdm)$ that any given $\beta \in \tilde{\roots{}}{\mspace{-2mu}}^+$ may be decomposed as $\sroot{i} + \beta'$, where $\sroot{i}$ is in the complement $\tilde{\sroots{}}$ of $\sroots{\al} \sqcup \sroots{}(\mdm)$ in $\sroots{\ag}$ and $\beta'$ is a non-negative-integer linear combination of simple roots.  The weight of $U$ is therefore a linear combination of simple roots in which the coefficient of $\sroot{i}$ is positive.  However, this contradicts $w \in \mdd = \envalg{\ag(\mdm)} v$ because the latter implies that only the coefficients of the simple roots of $\ag(\mdm)$ can be non-zero.  This contradiction establishes that the vector $w$ does not exist, hence that $f^{\alpha}$ acts locally nilpotently on $v$ as required.
\end{proof}

We note that computing $\swgrp{\mdm}$ can be done directly at the level of the Dynkin diagrams $\dynkin{\ag}$ and $\dynkin{\as}$ of $\ag$ and the semisimple subalgebra $\as = \comm{\al}{\al}$ of $\al$, respectively.  The latter is of course the subdiagram of $\dynkin{\ag}$ consisting of the nodes corresponding to the simple roots $\sroots{\al} \subseteq \sroots{\ag}$ and the edges connecting them.  The simple roots $\sroots{\al}^{\perp}$ thus correspond to the nodes of $\dynkin{\ag}$ that are neither in $\dynkin{\as}$ nor are directly connected to any node in $\dynkin{\as}$.  Moreover, for each such node, the scalar $\lambda_i$ is just the (necessarily common) Dynkin label of the weights of $\uinv{\ap} \mdm$.

We illustrate this with a simple example: $\ag = \SLA{sl}{4}$ and $\mdm = \pind{\ap} \mdn$, where $\ap$ is the parabolic subalgebra of $\ag$ corresponding to $\sroot{1}$ and $\mdn$ is a simple dense module over the Levi factor $\al \cong \SLA{sl}{2} \oplus \SLA{gl}{1}^{\oplus 2}$.  Since the Dynkin diagram of $\as \cong \SLA{sl}{2}$ is realised as the first node of that of $\SLA{sl}{4}$, we see that $\sroots{\al}^{\perp} = \set{\sroot{3}}$ as the second node is directly connected to the first.  If the third Dynkin label of any (and thus every) weight of $\mdn$ is a non-negative integer, then the small Weyl group of $\mdm$ is $\swgrp{\mdm} = \ang{\swref{3}} \cong \ZZ_2$; otherwise, it is $\wun$.

Given a semisimple weight module $\mdm=\bigoplus_i \mdm_i$, where the $\mdm_i$ are simple and weight with \fdim{} weight spaces, we define the small Weyl group of $\mdm$ to be $\swgrp{\mdm} = \bigcap_i \swgrp{\mdm_i}$.  In particular, consider the small Weyl group of an irreducible semisimple parabolic family $\mdp = \pind{\ap}{\mdc}$ of $\ag$-modules, where $\ap$ has non-abelian Levi factor $\al$ and $\mdc$ is a coherent family of $\al$-modules.  The following \lcnamecref{thm:type} now follows from \cref{thm:fernando,lem:dimitrov}.
\begin{proposition}\label{thm:type}
	Given a parabolic family $\mdp$ of $\ag$-modules, the choice of $(\ap,\mdc)$ is unique up to the action of the small Weyl group $\swgrp{\mdp}$.
\end{proposition}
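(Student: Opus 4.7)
The plan is to reduce the uniqueness claim to \cref{lem:dimitrov} applied to a carefully chosen simple summand of $\mdp$ and then propagate the resulting Weyl-group element to the whole coherent family via Mathieu's uniqueness result for coherent-family extensions (\cref{thm:mathieu1}\ref{it:cohextension}).  So suppose that $\mdp \cong \pind{\ap_1} \mdc_1 \cong \pind{\ap_2} \mdc_2$ for two pairs of the required form.  I would begin by picking a simple dense summand $\mdn_1 = \mdc_{1,\mu_1}$ of $\mdc_1$; such a summand exists because the Levi factor $\al_1$ is non-abelian of \type{AC}-type, so its irreducible semisimple coherent families have generically dense simple summands by Mathieu's classification.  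The corresponding simple summand $\mdp_\lambda \cong \pind{\ap_1} \mdn_1$ of $\mdp$ is then not \hw{} with respect to any Borel subalgebra of $\ag$, and via the second representation we also have $\mdp_\lambda \cong \pind{\ap_2} \mdn_2$ for some simple dense summand $\mdn_2 = \mdc_{2,\mu_2}$ of $\mdc_2$.

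Applying \cref{lem:dimitrov} to $\mdp_\lambda$ would then yield an element $w \in \swgrp{\mdp_\lambda}$ with $\ap_2 = w(\ap_1)$ and $\mdn_2 \cong w(\mdn_1)$.  Both $\mdc_2$ and $w(\mdc_1)$ are irreducible semisimple coherent families over the Levi $w(\al_1)$ sharing the simple bounded direct summand $\mdn_2 \cong w(\mdn_1)$, so \cref{thm:mathieu1}\ref{it:cohextension} forces $\mdc_2 \cong w(\mdc_1)$ as $w(\al_1)$-modules.

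The main obstacle is to upgrade the containment $w \in \swgrp{\mdp_\lambda}$ to $w \in \swgrp{\mdp} = \bigcap_\nu \swgrp{\mdp_\nu}$, where the intersection runs over all simple direct summands $\mdp_\nu$ of $\mdp$.  For this I would invoke \cref{prop:swg}, which identifies $\swgrp{\mdp_\lambda}$ as the group generated by the simple reflections $\swref{i}$ for which $\sroot{i} \in \sroots{\al_1}^\perp$ and $\bilin{\nu}{\scroot{i}} \in \NN$ for some (hence every) $\nu \in \supp{\mdn_1}$.  The key observation is that $\sroot{i} \perp \sroots{\al_1}$ forces $\scroot{i}$ into the centre $\az_1$ of $\al_1$, so this Dynkin label depends only on the central component $\zeta$ of $\supp{\mdc_1} = \zeta + \ahs_1^*$ and is constant across every simple summand of $\mdc_1$.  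This immediately gives $\swgrp{\mdp_\lambda} = \swgrp{\mdp_\nu}$ for every non-highest-weight simple summand $\mdp_\nu$ of $\mdp$ (again via \cref{prop:swg}), while the analogous pairing calculation for a simple highest-weight summand $L(\mu_\nu) = \mdp_\nu$ yields $\bilin{\mu_\nu}{\scroot{i}} = \bilin{\zeta}{\scroot{i}} \in \NN$ for each generator $\swref{i}$ of $\swgrp{\mdp_\lambda}$, whence $\swref{i} \in \swgrp{\mdp_\nu}$.  Thus $\swgrp{\mdp_\lambda} \subseteq \swgrp{\mdp_\nu}$ for every simple summand of $\mdp$, so $w \in \swgrp{\mdp}$ and the proof is complete.
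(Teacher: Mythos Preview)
Your argument is correct and is essentially a fleshed-out version of the paper's one-line justification (``follows from \cref{thm:fernando,lem:dimitrov}''). The paper leaves all the details you supply to the reader, including the crucial passage from $\swgrp{\mdp_\lambda}$ to $\swgrp{\mdp}$; in fact, your final paragraph is precisely a proof of \cref{prop:smallweyl}, which the paper states and proves \emph{immediately after} \cref{thm:type}, using a slightly different argument (polynomiality of the action of $\coroot{\alpha}$ on $\mdc$ rather than the structural description via \cref{prop:swg}).

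Two small points worth tightening. First, your assertion that $\mdn_2 = \mdc_{2,\mu_2}$ is a full simple dense summand of $\mdc_2$ deserves a word of justification: since $\mdp_\lambda$ is a simple $\rlat_\ag$-coset summand of $\mdp$ and $\rlat_\ag \cap \ah_{\as_2}^* = \rlat_{\al_2}$, the invariants $\uinv{\ap_2}\mdp_\lambda$ coincide with a single $\rlat_{\al_2}$-coset summand of $\mdc_2$, which is then simple by \cref{prop:functors}\ref{it:functlex} and hence dense. Second, your dichotomy ``non-highest-weight vs.\ highest-weight'' for the simple summands of $\mdp$ is slightly loose---a simple bounded $\al_1$-submodule of $\mdc_1$ can be dense on some simple ideals of $\as_1$ and \hw{} on others---but your \cref{prop:swg} argument goes through unchanged: the relevant coroots $\scroot{i}$ lie in $\az_1$ and therefore act by the same scalar on every simple submodule of $\mdc_1$, so $\swref{i}$ lands in the small Weyl group of every simple summand $\mdp_\nu$ regardless of its type.
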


As one might hope, the simple dense submodules of a parabolic family of $\ag$-modules all have the same small Weyl group.  We may therefore compute $\swgrp{\mdp}$ using the method discussed above.  More importantly, we do not have to perform uncountably many computations in order to deduce the small Weyl groups of all its simple submodules.
\begin{proposition}\label{prop:smallweyl}
	Let $\mdn$ be a simple dense $\al$-submodule of $\mdc$ and $\mdm=\pind{\ap}\mdn$ the corresponding simple submodule of $\mdp = \pind{\ap} \mdc$.  Then, the small Weyl group of $\mdm$ is contained in the small Weyl group of every submodule of $\mdp$.  In particular, $\swgrp{\mdp}=\swgrp{\mdm}$.
\end{proposition}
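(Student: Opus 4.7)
My plan is to reduce the claim to a comparison of the generator sets of $\swgrp{\mdm}$ and $\swgrp{\mdm'}$ for each simple summand $\mdm'$ of $\mdp$, with the generators furnished by \cref{prop:swg}. Since $\mdp$ is semisimple, the definition of the small Weyl group of a semisimple module gives $\swgrp{\mdp} = \bigcap_i \swgrp{\mdm'_i}$ over the simple summands $\mdm'_i$; once $\swgrp{\mdm} \subseteq \swgrp{\mdm'_i}$ is established for every simple summand, the ``in particular'' equality $\swgrp{\mdp} = \swgrp{\mdm}$ will follow because $\mdm$ itself is among the $\mdm'_i$.

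First I would cast each simple $\mdm' \subseteq \mdp$ as a parabolic induction from a simple dense module, as required by \cref{prop:swg}. The almost-simple quotient construction of $\mdp$ forbids any non-zero submodule from meeting $\wun \otimes \mdc$ trivially, so $\mdn' := \uinv{\ap}\mdm'$ is a non-zero simple $\al$-submodule of $\mdc$ and $\mdm' \cong \pind{\ap}\mdn'$ by \cref{prop:functors}. Applying Fernando's \cref{thm:fernando} (in its reductive form) inside $\al$ to $\mdn'$ yields a parabolic $\ap_1 \subseteq \al$ with $\type{AC}$-type Levi $\al_1$ and a simple dense $\al_1$-module $\mdn''$ with $\mdn' \cong \pind{\ap_1}\mdn''$; transitivity of parabolic induction then upgrades this to $\mdm' \cong \pind{\ap_0}\mdn''$ for the parabolic $\ap_0 := \ap_1 + \au$ of $\ag$ with Levi $\al_1 \subseteq \al$. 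Feeding both parabolic presentations into \cref{prop:swg} produces the descriptions
\begin{equation*}
	\swgrp{\mdm} = \ang*{\swref{j} \st \sroot{j} \in \sroots{\al}^{\perp}, \bilin{\lambda}{\scroot{j}} \in \NN}, \quad \swgrp{\mdm'} = \ang*{\swref{j} \st \sroot{j} \in \sroots{\al_1}^{\perp}, \bilin{\mu''}{\scroot{j}} \in \NN},
\end{equation*}
for any $\lambda \in \supp{\mdn}$ and any $\mu'' \in \supp{\mdn''}$.

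The core step is to verify that each generator of $\swgrp{\mdm}$ satisfies both defining conditions for $\swgrp{\mdm'}$. The orthogonality condition transfers at once, since $\sroots{\al_1} \subseteq \sroots{\al}$ forces $\sroots{\al}^{\perp} \subseteq \sroots{\al_1}^{\perp}$. For the integrality condition I would exploit that $\sroots{\al}$ spans $\ahs^*$, so $\sroot{j} \perp \sroots{\al}$ places $\scroot{j}$ in the centre $\az$ under the orthogonal decomposition $\ah = \ahs \oplus \az$; consequently $\bilin{\blank}{\scroot{j}}$ is constant on $\supp{\mdc} = \zeta + \ahs^*$, and as both $\supp{\mdn}$ and $\supp{\mdn''}$ lie in $\supp{\mdc}$, this common constant is the non-negative integer $\bilin{\lambda}{\scroot{j}}$, giving $\swref{j} \in \swgrp{\mdm'}$. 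The principal obstacle I foresee is precisely that $\mdn'$ may fail to be dense over $\al$, which forces the $\mdm'$-side of the comparison through a strictly larger parabolic $\ap_0 \supsetneq \ap$ with strictly smaller Levi $\al_1 \subsetneq \al$ and a priori admits extra generators for $\swgrp{\mdm'}$; the rescue is the coherent family hypothesis on $\mdc$, which is exactly what guarantees that the centre-directed Dynkin labels transfer faithfully between $\mdn$ and $\mdn''$.
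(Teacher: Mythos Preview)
Your core observation --- that for $\sroot{j}\in\sroots{\al}^\perp$ the coroot $\scroot{j}$ lies in $\az$ and hence $\bilin{\blank}{\scroot{j}}$ is constant on $\supp{\mdc}=\zeta+\ahs^*$ --- is exactly the mechanism the paper exploits (the paper phrases it via Zariski density and polynomiality of the $\coroot{\alpha}$-action, but for an element of $\ah$ this reduces to your constancy statement). So the heart of the argument is right.

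The gap is in your detour through \cref{prop:swg} on the $\mdm'$ side. You assert $\sroots{\al_1}\subseteq\sroots{\al}$, which requires that Fernando's parabolic $\ap_1\subseteq\al$ can be taken \emph{standard} with respect to $\sroots{\al}$. But Fernando's theorem gives no such guarantee: if, say, $\mdn'\subset\mdc$ happens to be a \emph{lowest}-weight $\al$-module (which certainly occurs, since coherent families are $\wgrp_{\al}$-invariant), then the only parabolic presentation of $\mdn'$ uses the opposite Borel of $\al$, and $\ap_0=\ap_1+\au$ is not standard in $\ag$. In that case \cref{prop:swg} applied to $(\ap_0,\mdn'')$ produces generators indexed by a \emph{different} simple system, and your inclusion $\sroots{\al}^\perp\subseteq\sroots{\al_1}^\perp$ no longer makes sense as stated. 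The paper sidesteps this entirely: rather than seeking a parabolic presentation of $\mdm'$, it shows $\nilroots{\mdm}\subseteq\nilroots{\mdm'}$ directly. For $\alpha\in\nilroots{\mdm}$ one has $e^\alpha\in\au$, so $e^\alpha$ kills $\mdn'=\uinv{\ap}\mdm'\subset\mdc$; your constancy argument then gives $\coroot{\alpha}$ acting as the same $\lambda_\alpha\in\NN$ on $\mdn'$, whence $(f^\alpha)^{\lambda_\alpha+1}$ kills $\mdn'$ by $\SLA{sl}{2}$-theory, and simplicity of $\mdm'$ propagates local nilpotence of $f^\alpha$ to all of $\mdm'$. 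This needs only that $\mdm'$ is simple and $\mdn'\neq0$ --- no second invocation of Fernando. Your proof becomes correct if you replace the $\ap_0$-construction with this direct verification that $\sroot{j}\in\nilroots{\mdm'}$.
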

\begin{proof}
	Choose a positive root $\alpha \in \nilroots{\mdm}$.  Then, $e^{\alpha} \in \au$ so $e^{\alpha} v = 0$ for all $v \in \uinv{\ap} \mdm = \mdn$.  Moreover, because the action of $f^{\alpha}$ on $v$ is locally nilpotent, $\coroot{\alpha} = \comm{e^{\alpha}}{f^{\alpha}}$ acts on $v$ as multiplication by some non-negative integer $\lambda_{\alpha}$.  This integer is $v$-independent because $\coroot{\alpha}$ is orthogonal to $\roots{\al}$ (\cref{prop:swg}).  As $\mdn$ is dense, it is bounded and so its weights are Zariski-dense in $\supp{\mdc}$ (\cref{thm:mathieu1}\ref{it:zariski}).  Because $\coroot{\alpha} \in \ah \subset \cent{\ah}{\al}$, it acts polynomially on $\mdc$.  We therefore conclude that $\coroot{\alpha}$ acts as multiplication by $\lambda_{\alpha}$ on all of $\mdc$.

	In particular, $\coroot{\alpha}$ acts as multiplication by $\lambda_{\alpha} \in \NN$ on any simple submodule $\mdn' \subset \mdc = \uinv{\ap} \mdp$.  Thus, $e^{\alpha}$ acts on $\mdn'$ as $0$ and so $f^{\alpha}$ acts locally nilpotently on the subspace $\mdn'$ of the $\ag$-module $\mdm' = \pind{\ap} \mdn' \subset \mdp$.  As the action of a root vector on a simple module is either injective or locally nilpotent, this proves that $e^{\alpha}$ and $f^{\alpha}$ both act locally nilpotently on $\mdm$'.  In other words, $\alpha \in \nilroots{\mdm'}$ and so $\nilroots{\mdm} \subseteq \nilroots{\mdm'}$.  The small Weyl group of every simple submodule of $\mdp$ thus contains that of $\mdm$, completing the proof.
\end{proof}

\section{A classification algorithm} \label{sec:algorithm}

We shall now combine \cref{thm:classification} with the theory developed in \cref{sec:ac,sec:refine} to present an algorithm whose input is the classification of simple \hw{} $\aaz$-modules and whose output is the classification of all simple weight $\aaz$-modules with \fdim{} weight spaces.  In \cref{sec:ex} below, we shall illustrate this algorithm with several examples in which $\aaz$ is the Zhu algebra $\aaz_{\kk}$ of a simple affine \voa{} $\lkg$.  In this case, the algorithm then implies the classification of the simple relaxed \hw{} $\lkg$-modules, by \cref{thm:zhu}\ref{it:zhusimp}, again assuming \fdim{} weight spaces.

Fix a set of simple roots $\sroots{\ag} = \set{\sroot{1},\dots,\sroot{r}}$ of $\ag$, where $r$ is the rank of $\ag$.  We shall refer to a parabolic subalgebra of $\ag$ as being \emph{standard} (with respect to $\sroots{\ag}$) if it contains all of the simple root vectors $e^{\sroot{i}}$, $i=1,\dots,r$.  We shall similarly call a parabolic family $\mdp = \pind{\ap} \mdc$ \emph{standard} when it is induced from a coherent family $\mdc$ over the Levi factor $\al$ of a standard parabolic $\ap$.  We recall that a standard parabolic subalgebra $\ap \subseteq \ag$ is completely determined by the set $S \subseteq \set{1,\dots,r}$ of indices $i$ (or Dynkin nodes) for which the negative simple root vectors $f^{\sroot{i}}$ also belong to $\ap$.

Just as every parabolic subalgebra of $\ag$ may be obtained from a standard parabolic subalgebra by acting with the Weyl group $\wgrp$, every parabolic family may similarly be obtained from a standard parabolic family using $\wgrp$.  Since coherent families of $\al$-modules are invariant under the action of the Weyl group $\wgrp_{\al} \subseteq \wgrp$ of $\al$, as is $\ap$, it follows that the parabolic family $\mdp$ is also preserved by $\wgrp_{\al}$.  Moreover, the small Weyl group $\swgrp{\mdp}$ preserves $\mdp$ but not necessarily $\mdc$ or $\ap$.  Thus, the $\wgrp$-orbit of each standard parabolic family $\mdp$ gives $\abs{\wgrp} \big/ \brac*{\abs{\swgrp{\mdp}} \abs{\wgrp_{\al}}}$ different parabolic families.  Indeed, the classification of parabolic families of $\ag$-modules reduces to that of standard parabolic families and the computation of their small Weyl groups (see \cref{thm:type,prop:smallweyl}).

The basic idea of the classification algorithm is to choose a standard parabolic subalgebra $\ap \subseteq \ag$ and determine which, if any, of the simple \hw{} $\aaz$-modules are $\al$-bounded.  By \cref{thm:classification}, each such module is contained in an irreducible semisimple standard parabolic family of $\aaz$-modules and every simple weight $\aaz$-module, with \fdim{} weight spaces, is contained in a $\wgrp$-twist of such a parabolic family.  To assist with determining when a \hwm{} is $\al$-bounded, write
\begin{equation}
	\al = \as_1 \oplus \dots \oplus \as_m \oplus \az,
\end{equation}
where the $\as_i$ are simple ideals and $\az$ is the centre of $\al$.  We let $\pi_{\as_i}$ denote the orthogonal projection onto $\as_i$ and let $\CC_{\mu}$ denote the one-dimensional $\az$-module whose sole weight is $\mu$.

The classification algorithm is then as follows.
\begin{algorithm}
	Let $\ag$ be a \fdim{} simple Lie algebra and let $\aaz$ be a quotient of $\envalg{\ag}$ by a two-sided ideal.  Assume that the simple \hw{} $\aaz$-modules have been classified.
	\begin{itemize}
		\item Consider each non-empty subset $S \subseteq \set{1,\dots,r}$ and determine if the corresponding standard parabolic subalgebra $\ap$ is of \type{AC}-type.  This is easy to check by looking at the connected components of the Dynkin diagram of $\as$, where $\as = \comm{\al}{\al}$ and $\al$ is the Levi factor of $\ap$.
		\item If $\ap$ is of \type{AC}-type, consider the highest weight $\lambda$ of each simple \hw{} $\aaz$-module $\mdh$ and compute the projections $\pi_{\as_i}(\lambda)$, $i=1,\dots,m$, onto the weight spaces of the simple ideals $\as_i$ of $\al$.
		\item For each $i=1,\dots,m$, use \cref{prop:bounded-a,prop:bounded-c} to determine whether $\pi_{\as_i}(\lambda) \in \bounded_{\as_i}$.  If so, then there is an irreducible semisimple coherent family $\mdc_i$ of $\as_i$-modules containing the simple \hw{} $\as_i$-module of highest weight $\pi_{\as_i}(\lambda)$.
		\item If $\mdc_i$ exists for all $i=1,\dots,m$, then there is an irreducible semisimple standard parabolic family
		\begin{equation}
			\mdp = \pind{\ap} \brac*{\mdc_1 \otimes \dots \otimes \mdc_m \otimes \CC_{\mu}}, \qquad
			\mu = \lambda - \sum_{i=1}^m \pi_{\as_i}(\lambda),
		\end{equation}
		that contains $\mdh$.  $\mdp$ is thus a $\aaz$-module, by \cref{thm:classification}, hence so are all its direct summands.
		\item Determine which $\lambda$ give the same parabolic family by using \cref{prop:bij-a,prop:connect-a,prop:connect-c} to compute the connected components $[\pi_{\as_i}(\lambda)] \in \bounded_{\as_i} \big/ (\rightarrow)$.
		\item For each irreducible semisimple standard parabolic family $\mdp$ of $\aaz$-modules found, act with representatives of $\wgrp \big/ \brac*{\swgrp{\mdp} \times \wgrp_{\al}}$ to obtain a complete set of irreducible semisimple parabolic families of $\aaz$-modules.
	\end{itemize}
	Along with the simple \hw{} $\aaz$-modules, the direct summands of the irreducible semisimple parabolic families of $\aaz$-modules found with this algorithm form a complete set, up to isomorphism, of simple weight $\aaz$-modules with \fdim{} weight spaces.
\end{algorithm}

\section{Examples} \label{sec:ex}

In this \lcnamecref{sec:ex}, we apply the classification algorithm to some concrete examples of simple \voas{} $\lkg$ in order to classify the irreducible semisimple standard parabolic (and coherent) families of the corresponding Zhu algebras $\aaz_{\kk} = \zhu{\lkg}$.  By \cref{thm:zhu}\ref{it:zhusimp}, this yields a classification of all the simple relaxed \hwms{} (with \fdim{} weight spaces) of the \voa{}.  We recall that non-standard parabolic families are obtained by twisting standard ones by elements of the Weyl group of $\ag$, as described in \cref{thm:type,prop:smallweyl}.  We use the same notations as in \cref{sec:refine} and will assume that all the parabolic families considered in this \lcnamecref{sec:ex} are both semisimple and irreducible.

\subsection{Example: $\slvoa{\kk}{2}$ for $\kk$ admissible} \label{ex:a1}

We warm up with the familiar case of $\ag = \SLA{sl}{2}$ with $\kk$ admissible and non-integral:
\begin{equation}
	k+2 = \frac{u}{v}, \qquad \text{where}\ u,v \in \ZZ_{\ge 2}\ \text{and}\ \gcd \set{u,v} = 1.
\end{equation}
Let $\sroot{1}$ denote the simple root of $\SLA{sl}{2}$, so that $\fwt{1} = \frac{1}{2} \sroot{1}$ is the fundamental weight.

The simple \hw{} $\slvoa{\kk}{2}$-modules were originally classified in \cite{AdaVer95,DonVer97}, see also \cite{RidRel15}.  Their Zhu images are the \hw{} $\SLA{sl}{2}$-modules $\mdl_{r,s}$ of highest weights $\lambda_{r,s} = (r-1-\frac{u}{v} s) \fwt{1}$, where $r=1,2,\dots,u-1$ and $s=0,1,\dots,v-1$.  Note that the $\mdl_{r,s}$ with $s>0$ are bounded, so $\lambda_{r,s} \in \bounded_{\SLA{sl}{2}}$ for all $s>0$.  As these $\lambda_{r,s}$ are never integral, the connected component $[\lambda_{r,s}]$ has two elements (\cref{prop:connect-a}): $\lambda_{r,s}$ and $\swref{1} \cdot \lambda_{r,s} = \lambda_{u-r,v-s}$.  (Here, $\swref{1}$ denotes the simple Weyl reflection of $\SLA{sl}{2}$.)

Each distinct connected component $[\lambda_{r,s}]$, for $r=1,2,\dots,u-1$ and $s=1,2,\dots,v-1$, therefore gives rise to a distinct (standard) coherent family $\mdc_{[r,s]}$ of $\aaz_{\kk}$-modules, making $\frac{1}{2} (u-1)(v-1)$ families in all.  As coherent families are invariant under the action of the Weyl group $\wgrp \cong \ZZ_2$, there is no need to consider non-standard families.  Moreover, the $\mdc_{[r,s]}$ are distinguished by their central characters (\cref{prop:connect-a} again), meaning the eigenvalues of the quadratic Casimir.  Along with the $\mdl_{r,0}$, $r=1,2,\dots,u-1$, the simple direct summands of the $\mdc_{[r,s]}$ exhaust the simple weight $\aaz_{\kk}$-modules with \fdim{} weight spaces.  Note that because coherent families are $\wgrp$-invariant, the \lwms{} $\swref{1}(\mdl_{r,s})$ and $\swref{1}(\mdl_{u-r,v-s})$, with $s>0$, are also contained in $\mdc_{[r,s]}$ and are thus also $\aaz_{\kk}$-modules.

The corresponding $\slvoa{\kk}{2}$-modules therefore provide a classification of the simple relaxed \hwms{} (with \fdim{} weight spaces).  Each of these coherent families of $\slvoa{\kk}{2}$-modules is determined by its conformal weight $\Delta_{r,s}$, which is proportional to the eigenvalue of the quadratic Casimir of $\SLA{sl}{2}$ on the Zhu image $\mdc_{[r,s]}$:
\begin{equation}
	\Delta_{r,s} = \frac{1}{2(\kk+2)} \bilin{\lambda_{r,s}}{\lambda_{r,s} + 2 \wvec} = \frac{v}{2u} \frac{(r-1-\frac{u}{v}s) (r+1-\frac{u}{v}s)}{2} = \frac{(vr-us)^2-v^2}{4uv}.
\end{equation}
These results reproduce exactly the known classification of relaxed \hw{} $\slvoa{\kk}{2}$-modules that was obtained in \cite{AdaVer95,RidRel15} using more arduous methods.

\subsection{Example: $\slvoa{-3/2}{3}$} \label{ex:a2}

We next consider $\ag=\SLA{sl}{3}$ with simple roots $\sroot{1}$ and $\sroot{2}$, giving fundamental weights $\fwt{1}=\frac{1}{3}(2\sroot{1}+\sroot{2})$ and $\fwt{2}=\frac{1}{3}(\sroot{1}+2\sroot{2})$.  The level in this example is $\kk = -\frac{3}{2}$ which is admissible.  Moreover, $\slvoa{-3/2}{3}$ is the second member of a family of \voas{} related to the Deligne exceptional series --- the first being $\slvoa{-4/3}{2}$ --- that have recently attracted much attention in mathematics and physics, see \cite{BeeInf15,AraQua16} for example.

The simple \hw{} $\slvoa{-3/2}{3}$-modules were originally classified in \cite{PerVer08}.  The corresponding simple modules over the Zhu algebra $\aaz_{-3/2}$ are as follows:  One \fdim{} \hwm{} $\mdl_0$ and three \infdim{} \hwms{} $\mdl_{\Lam_1}$, $\mdl_{\Lam_2}$, $\mdl_{\Lam_3}$, where $\Lam_1 = -\frac{3}{2} \fwt{1}$, $\Lam_2 = -\frac{3}{2} \fwt{2}$ and $\Lam_3 = -\frac{1}{2} (\fwt{1}+\fwt{2})$.  Here, the subscripts indicate the highest weight.

We will now extend this to a classification of standard parabolic families of $\aaz_{-3/2}$-modules.  Recall that a standard parabolic subalgebra is determined by the subset $S$ of $\set{1,2}$ corresponding to which negative simple root vectors it contains.  When $S$ is empty, the parabolic is the standard Borel and so the corresponding parabolic families are just the \hw{} $\aaz_{-3/2}$-modules given above.

At the other extreme, $S=\set{1,2}$ corresponds to $\ap=\al=\ag$ and so parabolic families reduce to coherent families.  It is easy to check from \cref{prop:bounded-a} that the highest weights $\Lam_i$, $i=1,2,3$, of the \infdim{} \hw{} $\slvoa{-3/2}{3}$-modules listed above all belong to $\bounded_{\SLA{sl}{3}}$.  Indeed, $\Lam_1$ and $\Lam_2$ satisfy condition \ref{it:Aedge} while $\Lam_3$ satisfies condition \ref{it:Adoub}.  None of these weights are integral, so each belongs to a connected component of $\bounded_{\SLA{sl}{3}} \big/ (\ra)$ with three elements (\cref{prop:connect-a}).  There is therefore only one connected component, hence only one coherent family $\mdc$ of $\aaz_{-3/2}$-modules.  It is characterised by its central character which coincides with the common central character of the $\Lam_i$.  Moreover, $\mdl_{\Lam_i} \subset \mdc$, for each $i=1,2,3$.

Next, set $S=\set{1}$, which corresponds to $\al \cong \SLA{sl}{2} \oplus \SLA{gl}{1}$.  We orthogonally project each of the $\Lam_i$ onto the weight space of $\as \cong \SLA{sl}{2}$, here realised as $\CC \sroot{1}$, obtaining
\begin{equation}
	\Lam_1 = -\frac{3}{4} \sroot{1} - \frac{3}{4} \fwt{2}, \quad \text{hence} \quad \pi_{\as}(\Lam_1) = -\frac{3}{2} \fwt{1}^{\as},
\end{equation}
and similarly $\pi_{\as}(\Lam_2)=0$ and $\pi_{\as}(\Lam_3)=-\frac{1}{2}\fwt{1}^{\as}$ (see \cref{fig:sl3proj}).  Here, $\pi_{\as}$ denotes the orthogonal projection and $\fwt{1}^{\as}$ denotes the fundamental weight of $\as$.  We find that $\pi_{\as}(\Lam_1), \pi_{\as}(\Lam_3)\in\bounded_{\SLA{sl}{2}}$, while $\pi_{\as}(\Lam_2)\notin\bounded_{\SLA{sl}{2}}$.  In other words, the $\slvoa{-3/2}{3}$-modules $\mdl_{\Lam_1}$ and $\mdl_{\Lam_3}$ are $\al$-bounded, hence they correspond to parabolic families.  In fact, they correspond to the same parabolic family because $\pi_{\as}(\Lam_1)$ and $\pi_{\as}(\Lam_3)$ belong to the same connected component in $\bounded_{\SLA{sl}{2}} \big/ (\ra)$.

\begin{figure}
	\begin{tikzpicture}[>=stealth,scale=1.9]
		\draw[red,<->] (90:0.5) -- node[pos=0.03,above left] {\textcolor{black}{$\scriptstyle \lam_2$}} (-90:2);
		\draw[red,<->] (30:0.5) -- node[pos=0.01,right] {\textcolor{black}{$\scriptstyle \lam_1$}} (-150:2);
		\node[bwt,label=below:{$\Lam_1$}] (a1) at (30:-1.5) {};
		\node[bwt,label=below:{$\Lam_3$}] (a3) at ($(30:-0.5) + (90:-0.5)$) {};
		\node[bwt,label=right:{$\Lam_2$}] (a2) at (90:-1.5) {};
		\draw[thick,<->] (180:2) -- node[pos=0.99,below right] {$\CC \sroot{1}$} (0:0.75);
		\node[bwt,label=above:{$\pi_{\as}(\Lam_1)$}] (b1) at ($(30:-0.75) + (-30:-0.75)$) {};
		\node[bwt,label=above:{$\pi_{\as}(\Lam_3)$}] (b3) at ($(30:-0.25) + (-30:-0.25)$) {};
		\node[bwt,label=below right:{$\pi_{\as}(\Lam_2)$}] (b2) at (0,0) {};
		\draw[very thick,dotted,->] (a1) -- (b1);
		\draw[very thick,dotted,->] (a3) -- (b3);
		\draw[very thick,dotted,->] (a2) -- (b2);
	\end{tikzpicture}
\caption{Projecting the $\SLA{sl}{3}$-weights $\Lam_i$, $i=1,2,3$, onto the weight space $\CC \sroot{1}$ corresponding to $S = \set{1}$.  $\lambda_1$ and $\lambda_2$ indicate the directions of increasing Dynkin labels.} \label{fig:sl3proj}
\end{figure}

Thus, there is just one standard parabolic family $\mdp^1$ of $\aaz_{-3/2}$-modules corresponding to $S=\set{1}$ and it contains both $\mdl_{\Lam_1}$ and $\mdl_{\Lam_3}$.  It is induced from the coherent family $\mdc_q\otimes \CC_\mu$ of $\al$-modules, where
\begin{equation}
	\begin{aligned}
		q &= \bilin{\pi_{\as}(\Lam_1)}{\pi_{\as}(\Lam_1) + 2 \wvec^{\as}} = \bilin{\pi_{\as}(\Lam_3)}{\pi_{\as}(\Lam_3) + 2 \wvec^{\as}} = -\frac{3}{8} \\
		\text{and} \qquad \mu &= \Lam_1 - \pi_{\as}(\Lam_1) = \Lam_3 - \pi_{\as}(\Lam_3) = -\frac{3}{4} \fwt{2}
	\end{aligned}
\end{equation}
are the eigenvalue of the quadratic Casimir (central character) of $\SLA{sl}{2}$ and the $\SLA{gl}{1}$-weight, respectively.

Similarly, $S=\set{2}$ also yields precisely one standard parabolic family $\mdp^2=\pind{\ap}(\mdc_{-3/8}\otimes \CC_{-3\fwt{1}/4})$. It contains both $\mdl_{\Lam_2}$ and $\mdl_{\Lam_3}$, hence $\mdp^2 \subset \mdc$ as well.  Clearly, this parabolic family may be obtained from that found when $S=\set{1}$ by twisting by the conjugation automorphism (the outer automorphism of $\SLA{sl}{3}$ that acts as $-1$ on $\ah$).

Thus, for a given Borel subalgebra, there is $1$ \fdim{} $\aaz_{-3/2}$-module $\mdl_0$; $3$ \infdim{} \hw{} $\aaz_{-3/2}$-modules $\mdl_{\Lam_i}$, $i=1,2,3$; $2$ standard parabolic families $\mdp^1$ and $\mdp^2$ of $\aaz_{-3/2}$-modules corresponding to $\al \cong \SLA{gl}{2}$; and $1$ coherent family $\mdc$ of $\aaz_{-3/2}$-modules.  The small Weyl groups are as follows.
\begin{center}
	\begin{tabular}{C|CCCCCC}
		\mdm & \mdl_0 & \mdl_{\Lam_1} & \mdl_{\Lam_2} & \mdl_{\Lam_3} & \mdp^1,\ \mdp^2 & \mdc \\
		\hline
		\swgrp{\mdm} & \wgrp & \ang*{\swref{2}} & \ang*{\swref{1}} & \wun & \wun & \wun
	\end{tabular}
\end{center}
Twisting $\mdl_0$ by $\wgrp$, which amounts to changing the Borel, thus leads to $\abs{\wgrp / \swgrp{\mdl_0}} = 1$ \fdim{} simple \hwm{}.  Similarly, we get $3$ twists each for $\mdl_{\Lam_1}$ and $\mdl_{\Lam_2}$, while $\mdl_{\Lam_3}$ gets $6$.  The action of $\wgrp$ on the parabolic corresponding to $S=\set{1}$ results in $\abs{\wgrp / \wgrp_{\SLA{sl}{2}}} = 3$ parabolics, because the parabolic families are invariant under acting with the Weyl group of $\al$ (which coincides with that of $\SLA{sl}{2}$).  There are another $3$ coming from $S=\set{2}$, hence we have $6$ parabolic families in total.  Finally, there is only a single coherent family.

The resulting classification of simple relaxed \hw{} $\slvoa{-3/2}{3}$-modules (with \fdim{} weight spaces) appears to be consistent with the Gelfand-Tsetlin classification reported in \cite{AraWei16}.  Some relaxed \hwms{} for this \voa{} were also constructed in \cite{AdaRea16}, but with no claim of completeness.  An analysis of the characters, modular properties and Grothendieck fusion rules of all these modules will appear in \cite{KawAdm19}.

\subsection{Example: $\spvoa{-1/2}{4}$} \label{ex:c2}

Consider now a non-simply-laced admissible-level example: $\ag=\SLA{sp}{4}$ and $\kk=-\frac{1}{2}$.  Recall from \cref{sec:typec} that we take $\sroot{1}$ to be short and $\sroot{2}$ long, so that the fundamental weights are $\fwt{1}=\sroot{1}+\frac{1}{2}\sroot{2}$ and $\fwt{2}=\sroot{1}+\sroot{2}$.

The simple \hw{} $\spvoa{-1/2}{4}$-modules were first classified in \cite{AdaSom94}.  There turn out to be four simple \hw{} $\aaz_{-1/2}$-modules, of which two are \fdim{} ($\mdl_0$ and $\mdl_{\fwt{1}}$) and two are not.  The highest weights of the \infdim{} modules will be denoted by $\Lam_1=-\frac{1}{2}\fwt{2}$ and $\Lam_2=\fwt{1}-\frac{3}{2}\fwt{2}$.

As always, we work down the list of (standard) parabolic subalgebras (ignoring the Borel case that corresponds to \hwms{}).  Starting with $S=\set{1,2}$, hence $\ap=\al=\ag$, we check that both $\Lam_1$ and $\Lam_2$ satisfy all the conditions of \cref{prop:bounded-c}, hence they belong to $\bounded_{\SLA{sp}{4}}$.  Since connected components for $\SLA{sp}{4}$ always have two elements, by \cref{prop:connect-c}, there is a single connected component giving exactly one coherent family $\mdc$ of $\aaz_{-1/2}$-modules.  It is characterised by its central character and contains both $\mdl_{\Lam_1}$ and $\mdl_{\Lam_2}$.

If $S=\set{1}$, hence $\al \cong \SLA{sl}{2} \oplus \SLA{gl}{1}$, then projecting onto the $\as=\SLA{sl}{2}$ weight space spanned by $\sroot{1}$ results in $\pi_{\as}(\Lam_1)=0$ and $\pi_{\as}(\Lam_2)=\fwt{1}^{\as}$.  As both are dominant integral $\SLA{sl}{2}$-weights, they are not in $\bounded_{\SLA{sl}{2}}$ and there are therefore no parabolic families corresponding to this $S$.

When $S=\set{2}$ however, we again have $\al \cong \SLA{sl}{2} \oplus \SLA{gl}{1}$, but the projection this time gives two elements of $\bounded_{\SLA{sl}{2}}$: $\pi_{\as}(\Lam_1) = -\frac{1}{2} \fwt{1}^{\as}$ and $\pi_{\as}(\Lam_2) = -\frac{3}{2} \fwt{1}^{\as}$.  These represent a single connected component for $\SLA{sl}{2}$, hence it corresponds to a single coherent family $\mdc_{-3/8}\otimes \CC_{-\fwt{1}/2}$ of $\al$-modules.  There is thus a unique standard parabolic family $\mdp=\pind{\ap}(\mdc_{-3/8} \otimes \CC_{-\fwt{1}/2})$ of $\aaz_{-1/2}$-modules.

As always, the small Weyl group of the \fdim{} modules is $\wgrp$.  There are therefore only $2$ \fdim{} simple $\aaz_{-1/2}$-modules.  The small Weyl group of both $\mdl_{\Lam_1}$ and $\mdl_{\Lam_2}$ is $\ang{\swref{1}}$, hence $\wgrp$-twisting gives $\abs{\wgrp / \ang{\swref{1}}} = 4$ modules each.  The result is thus $8$ \infdim{} simple \hw{} $\aaz_{-1/2}$-modules.  Once again, the small Weyl groups of the parabolic and coherent families are trivial.  Hence, we get $\abs{\wgrp / \wgrp_{\SLA{sl}{2}}} = 4$ parabolic families and a single coherent family of $\aaz_{-1/2}$-modules.  We believe that the corresponding classification of simple relaxed \hw{} $\spvoa{-1/2}{4}$-modules, with \fdim{} weight spaces, is new.

\subsection{Example: $\affvoa{-5/3}{\ag_2}$} \label{ex:g2}

Our next example features a simple Lie algebra which is not of \type{AC}-type.  Take $\ag=\ag_2$ and $k=-\frac{5}{3}$, an admissible level corresponding to the third member of the Deligne exceptional series of affine \voas.  Our convention is that $\sroot{1}$ denotes the short simple root and $\sroot{2}$ the long one.  The fundamental weights are thus $\fwt{1}=2\sroot{1}+\sroot{2}$ and $\fwt{2}=3\sroot{1}+2\sroot{2}$.

The classification of simple \hw{} $\affvoa{-5/3}{\ag_2}$-modules was carried out in \cite{AxtVer11}.  At the level of the Zhu algebra $\aaz_{-5/3}$, there is a single simple \fdim{} module $\mdl_0$ and two \infdim{} simple \hwms{} $\mdl_{\Lam_1}$ and $\mdl_{\Lam_2}$, where $\Lam_1=-\frac{2}{3}\fwt{2}$ and $\Lam_2=\fwt{1}-\frac{4}{3}\fwt{2}$.

Because $\ag_2$ is not of \type{AC}-type, there can be no coherent families corresponding to $S=\set{1,2}$.  Moreover, $S=\set{1}$ (thus $\al \cong \SLA{sl}{2} \oplus \SLA{gl}{1}$) does not yield any parabolic families because neither $\pi_{\as}(\Lam_1)=0$ nor $\pi_{\as}(\Lam_2)=\fwt{1}^{\as}$ belong to $\bounded_{\SLA{sl}{2}}$.  We therefore turn to $S=\set{2}$, for which $\al \cong \SLA{sl}{2} \oplus \SLA{gl}{1}$, $\pi_{\as}(\Lam_1)=-\frac{2}{3}\fwt{2}^{\as}$ and $\pi_{\as}(\Lam_2)=-\frac{4}{3}\fwt{2}^{\as}$.  Both projections belong to $\bounded_{\SLA{sl}{2}}$ and constitute a single connected component.  We therefore have only one standard parabolic family $\mdp$ of $\aaz_{-5/3}$-modules (and it corresponds to $S=\set{2}$).  It contains both $\mdl_{\Lam_1}$ and $\mdl_{\Lam_2}$ and is constructed by applying $\pind{\ap}$ to the coherent family $\mdc_{-4/9}\otimes \CC_{-\fwt{1}}$ of $\SLA{sl}{2} \oplus \SLA{gl}{1}$-modules.

Summarising, there is just one \fdim{} simple \hw{} $\aaz_{-5/3}$-module.  Because the small Weyl groups of $\mdl_{\Lam_1}$ and $\mdl_{\Lam_2}$ are both easily checked to be $\ang{\swref{1}}$, we get $6$ distinct $\wgrp$-twists from each, resulting in $12$ simple \infdim{} \hw{} $\aaz_{-5/3}$-modules.  The standard parabolic family found above again has trivial small Weyl group, so it also generates $\abs{\wgrp / \wgrp_{\SLA{sl}{2}}} = 6$ parabolic families under $\wgrp$-twisting.  As mentioned above, there are no coherent families of $\aaz_{-5/3}$-modules.  Again, the corresponding classification of simple relaxed \hw{} $\affvoa{-5/3}{\alg{g}_2}$-modules, with \fdim{} weight spaces, is surely new.

\subsection{Example: $\sovoa{-2}{8}$} \label{ex:d4}

We finish up with a more challenging example, both to illustrate the power of our classification results but also to point out that non-admissible levels have some slightly different features.

The fourth member of the Deligne exceptional series of affine \voas{} corresponds to $\ag=\SLA{so}{8}$ at the non-admissible level $\kk=-2$.  We choose simple roots $\sroot{1},\dots,\sroot{4}$, ordered so that $\sroot{2}$ corresponds to the centre of the Dynkin diagram.  The fundamental weights then have the form
\begin{equation}
	\fwt{2} = \sroot{2} + \sum_{i=1}^4 \sroot{i}, \qquad
	\fwt{i} = \frac{1}{2} (\sroot{i} + \sroot{2}) + \frac{1}{2} \sum_{i=1}^4 \sroot{i}, \quad i=1,3,4.
\end{equation}

The simple \hwms{} over the Zhu algebra $\aaz_{-2}$ were classified in \cite{PerNot13}.  The result is that there is again a unique simple \fdim{} module $\mdl_0$, while now we have four \infdim{} simples: $\mdl_{\Lam_i}$, $i=1,2,3,4$, with $\Lam_2=-\fwt{2}$ and $\Lam_i=-2\fwt{i}$ for $i=1,3,4$.

The subsets $S \subseteq \set{1,2,3,4}$ that lead to \type{AC}-type parabolic subalgebras are easy to find:
\begin{enumerate}
	\item $S=\set{1},\set{3},\set{4}$, giving $\al \cong \SLA{sl}{2} \oplus \SLA{gl}{1}^{\oplus 3}$; \label{it:first}
	\item $S=\set{2}$, giving $\al \cong \SLA{sl}{2} \oplus \SLA{gl}{1}^{\oplus 3}$;
	\item $S=\set{1,2},\set{2,3},\set{2,4}$, giving $\al \cong \SLA{sl}{3} \oplus \SLA{gl}{1}^{\oplus 2}$;
	\item $S=\set{1,3},\set{1,4},\set{3,4}$, giving $\al \cong \SLA{sl}{2}^{\oplus 2} \oplus \SLA{gl}{1}^{\oplus 2}$;
	\item $S=\set{1,3,4}$, giving $\al \cong \SLA{sl}{2}^{\oplus 3} \oplus \SLA{gl}{1}$.
	\item $S=\set{1,2,3},\set{1,2,4},\set{2,3,4}$, giving $\al \cong \SLA{sl}{4} \oplus \SLA{gl}{1}$. \label{it:last}
\end{enumerate}
Note that the subsets listed in each case are all related by the $\ZZ_3$ outer automorphism of $\SLA{so}{8}$, represented on the Dynkin node labels by $1 \to 3 \to 4 \to 1$.  We shall therefore only need to analyse parabolic families from one representative of each case, the others then following by twisting by outer automorphisms.

We now classify the parabolic families of $\aaz_{-2}$-modules for each of the cases \ref{it:first}--\ref{it:last} above.
\begin{enumerate}
	\item When $S=\set{1}$, projecting the $\Lam_i$ onto the $\as \cong \SLA{sl}{2}$ weight space spanned by $\sroot{1}$ results in $\pi_{\as}(\Lam_1) = -2\fwt{1}^{\as}$ and $\pi_{\as}(\Lam_i)=0$ for $i=2,3,4$.  As $-2\fwt{1}^{\as} \in \bounded_{\SLA{sl}{2}}$ is integral, the connected component has a single element and so there is a single associated coherent family $\mdc^1 = \mdc_0 \otimes \CC_{-\fwt{2}}$ of $\al$-modules.  It induces to a standard parabolic family $\mdp^1 = \pind{\ap} \mdc^1$ of $\aaz_{-2}$-modules that contains $\mdl_{\Lam_1}$.  Interestingly, because the coherent family $\mdc_0$ of $\SLA{sl}{2}$-modules contains the trivial $\SLA{sl}{2}$-module $\CC_0$, it follows that $\mdp^1$ contains $\pind{\ap}(\CC_0 \otimes \CC_{-\fwt{2}}) \cong \mdl_{-\fwt{2}} = \mdl_{\Lam_2}$.  Twisting by $\ZZ_3$, we generate two other standard parabolic families of $\aaz_{-2}$-modules which we shall denote by $\mdp^3$ and $\mdp^4$.  The family $\mdp^i$, $i=1,3,4$, therefore corresponds to $S=\set{i}$ and contains both $\mdl_{\Lam_2}$ and $\mdl_{\Lam_i}$.\label{it:1}
	\item When $S=\set{2}$, projecting gives $\pi_{\as}(\Lam_i)=0$, for $i=1,3,4$, and $\pi_{\as}(\Lam_2)=-\fwt{1}^{\as} \in \bounded_{\SLA{sl}{2}}$.  This corresponds to a single coherent family $\mdc^2 = \mdc_{-1/2} \otimes \CC_{\mu}$ of $\al$-modules, where $\mu=-\frac{1}{2}(\fwt{1}+\fwt{3}+\fwt{4})$.  Inducing therefore gives a standard parabolic family $\mdp^2$ of $\aaz_{-2}$-modules that contains $\mdl_{\Lam_2}$.  Note that $\mdc_{-1/2}$ contains only one simple \hw{} $\SLA{sl}{2}$-module, hence $\mdp^2$ contains only one simple \hw{} $\SLA{so}{8}$-module.
	\item When $S=\set{1,2}$, projecting onto the $\as \cong \SLA{sl}{3}$ weight space spanned by $\sroot{1}$ and $\sroot{2}$ gives $\pi_{\as}(\Lam_1) = -2\fwt{1}^{\as}$ and $\pi_{\as}(\Lam_2)=-\fwt{2}^{\as}$, while both $\Lam_3$ and $\Lam_4$ give $0$.  As $\pi_{\as}(\Lam_1)$ and $\pi_{\as}(\Lam_2)$ are both shifted-singular integral weights in $\bounded_{\SLA{sl}{3}}$, the corresponding connected component has two elements.  This thus yields precisely one coherent family $\mdc^{1,2} = \mdc \otimes \CC_{\nu}$ of $\al$-modules, where $\nu = -\frac{2}{3} (\fwt{3}+\fwt{4})$ and $\mdc$ is determined by its $\SLA{sl}{3}$ central character (which coincides with that of the $\SLA{sl}{3}$-modules of highest weights $\pi_{\as}(\Lam_1)$ and $\pi_{\as}(\Lam_2)$).

	\noindent Note that the eigenvalue of the quadratic Casimir of $\mdc$ is
	\begin{equation}
		\bilin{\pi_{\as}(\Lam_1)}{\pi_{\as}(\Lam_1) + 2 \wvec^{\as}} = \bilin{\pi_{\as}(\Lam_2)}{\pi_{\as}(\Lam_2) + 2 \wvec^{\as}} = -\frac{4}{3} < 0.
	\end{equation}
	Because the Casimir eigenvalue on any \fdim{} simple $\SLA{sl}{3}$-module is non-negative, it follows that $\mdc$ contains no such modules.  Its only \hwms{} are thus the \infdim{} ones already found.  We can therefore now conclude that the standard parabolic family $\mdp^{1,2}$ of $\aaz_{-2}$-modules that we obtain by inducing contains only two \hw{} $\SLA{so}{8}$-modules: $\mdl_{\Lam_1}$ and $\mdl_{\Lam_2}$.  There are thus two other inequivalent standard parabolic families $\mdp^{2,3}$ and $\mdp^{2,4}$ which may be obtained as $\ZZ_3$-twists of $\mdp^{1,2}$.
	\item When $S=\set{1,3}$, we have $\as = \as_1 \oplus \as_3$, where $\as_1$ is the $\SLA{sl}{2}$ subalgebra corresponding to $\alpha_1$ and $\as_3$ is the $\SLA{sl}{2}$ subalgebra corresponding to $\alpha_3$.  From case \ref{it:1}, the only $\Lam_j$ whose $\as_i$-projection lands in $\bounded_{\as_i}$, for $i=1$ or $3$, is $\Lam_i$.  As no $\Lam_j$ satisfies this boundedness criterion for both $i=1$ and $3$, there is no coherent family of $\al$-modules, hence no parabolic family of $\aaz_{-2}$-modules.  The same is obviously true for $S=\set{1,4}$ and $\set{3,4}$.
	\item When $S=\set{1,3,4}$, there are likewise no parabolic families of $\aaz_{-2}$-modules for the same reason as in the previous case.
	\item Finally, when $S=\set{1,2,3}$, we have $\as \cong \SLA{sl}{4}$ and the projections are $\pi_{\as}(\Lam_1) = -2\fwt{1}^{\as}$, $\pi_{\as}(\Lam_2) = -\fwt{2}^{\as}$, $\pi_{\as}(\Lam_3) = -2\fwt{3}^{\as}$ and $\pi_{\as}(\Lam_4) = 0$.  All but the last belong to $\bounded_{\SLA{sl}{4}}$ and may be checked to be shifted-singular and integral.  They therefore form a single connected component, hence we get one coherent family $\mdc^{1,2,3} = \mdc \otimes \CC_{-\fwt{4}}$ of $\al$-modules.  Here, $\mdc$ is determined by its central character which agrees with that of the $\SLA{sl}{4}$-modules of highest weights $\pi_{\as}(\Lam_i)$, $i=1,2,3$.  In particular, the common quadratic Casimir eigenvalue is $-3$ which again rules out $\mdc$ containing any \fdim{} $\SLA{sl}{4}$-modules.  In this way, we arrive at one standard parabolic family $\mdp^{1,2,3}$ of $\aaz_{-2}$-modules that contains only the $\mdl_{\Lam_i}$ with $i=1,2,3$.  $\ZZ_3$-twisting gives two more standard parabolic families which we shall denote by $\mdp^{1,2,4}$ and $\mdp^{2,3,4}$.
\end{enumerate}

This gives us all the standard parabolic families of $\aaz_{-2}$-modules.  It only remains to determine how many non-standard families there are.  For this, we recall that the Weyl group $\wgrp$ of $\SLA{so}{8}$ is isomorphic to $\grp{S}_4 \ltimes \ZZ_2^3$ and so has order $4! \cdot 2^3 = 192$.  We tabulate the small Weyl groups of each family as well as the Weyl groups of the corresponding Levi factors $\al$:
\begin{center}
	\begin{tabular}{C|CCCCCCC}
		\mdm & \mdl_0 & \mdl_{\Lam_1},\ \mdl_{\Lam_3},\ \mdl_{\Lam_4} & \mdl_{\Lam_2} & \mdp^1,\ \mdp^3,\ \mdp^4 & \mdp^2 & \mdp^{1,2},\ \mdp^{2,3},\ \mdp^{2,4} & \mdp^{1,2,3},\ \mdp^{1,2,4},\ \mdp^{2,3,4} \\
		\hline
		\swgrp{\mdm} & \wgrp & \grp{S}_4 & \ZZ_2^3 & \ZZ_2^2 & \wun & \wun & \wun \\
		\wgrp_{\al} & \wun & \wun & \wun & \ZZ_2 & \ZZ_2 & \grp{S}_3 & \grp{S}_4 \\
		\dfrac{\abs{\wgrp}}{\abs{\swgrp{\mdm}} \abs{\wgrp_{\al}}} & 1 & 8 & 24 & 24 & 96 & 32 & 8
	\end{tabular}
\end{center}
We therefore have $1$ \fdim{} simple module, $3 \times 8 + 24 = 48$ \infdim{} \hwms{}, $3 \times 24 + 96 = 168$ one-parameter parabolic families with $\al \cong \SLA{sl}{2}$, $3 \times 32 = 96$ two-parameter parabolic families with $\al \cong \SLA{sl}{3}$, and $3 \times 8 = 24$ three-parameter parabolic families with $\al \cong \SLA{sl}{4}$.  This gives the complete classification of simple weight $\aaz_{-2}$-modules with \fdim{} weight spaces.  We are quietly confident that the corresponding classification of simple relaxed \hw{} $\sovoa{-2}{8}$-modules (with \fdim{} weight spaces) was unknown before now.

Note finally the unexpected, and therefore interesting, fact that the parabolic families $\mdp^1$, $\mdp^3$ and $\mdp^4$ all contain $\mdl_{\Lambda_2}$.  This is due to the fact that each of the corresponding coherent families of $\SLA{sl}{2}$-modules has a finite-dimensional \hw{} submodule.  We did not observe integral highest weights upon projecting the admissible weights of the previous examples, so it is reasonable to conjecture that this is a feature of non-admissible levels.  In this example, the projected weights were always shifted-singular, so the coherent families are completely characterised by their central characters.  It would be very interesting, and perhaps a little alarming, to find an example with shifted-regular projections, hence coherent families that cannot be distinguished by their central characters alone.

\section{An application to category $\categ{O}$} \label{sec:catO}

The previous \lcnamecref{sec:ex} detailed many examples of applications of \cref{thm:zhuclass}, hence \cref{thm:classification}.  In this \lcnamecref{sec:catO}, we outline an application of \cref{thm:zhuind}, hence \cref{thm:parabolicindecomposable}.

For an admissible-level affine \voa{} (like those studied in \cref{ex:a1,ex:a2,ex:c2,ex:g2}), the modules in category $\categ{O}$ are known to be semisimple.  This was originally conjectured in \cite{AdaVer95} and proven in \cite{AraRat16}.  Here, we pose the question of whether a \emph{quasilisse} affine \voa{} can have a non-semisimple module in category $\categ{O}$.  Recall that quasilisse \voas{} are generalisations, introduced in \cite{AraQua16}, of the well-known lisse, or $C_2$-cofinite, \voas{}.  Admissible-level affine \voas{} are always quasilisse \cite{AraQua16}, but there are also quasilisse affine examples with non-admissible levels.  We shall answer the question posed above in the affirmative by establishing that $\categ{O}$ is non-semisimple for the quasilisse, but non-admissible-level, affine \voa{} $\sovoa{-2}{8}$, studied in \cref{ex:d4}.

To establish this, we shall construct a non-semisimple extension $\mdm$ of two \hw{} $\zhu{\sovoa{-2}{8}}$-modules as a submodule of a non-semisimple parabolic family $\mdp$ of $\SLA{so}{8}$-modules.  We will then apply \cref{thm:zhuind} to show that $\ind{\mdp}$, and therefore the submodule $\ind{\mdm}$, is an $\sovoa{-2}{8}$-module (see \cref{prop:d4ses} below for the precise result).  We use the same notation as in \cref{ex:d4}.

Recall that there are five simple \hw{} $\sovoa{-2}{8}$-modules, up to isomorphism, and that their highest weights are $\Lambda_0 = -2 \fwt{0}$ (the vacuum module), $\Lambda_2 = -\fwt{2}$ and $\Lambda_i = -2\fwt{i}$, $i=1,3,4$.  The conformal weights of their \hwvs{} are easily established to be $0$, for the vacuum, and $-1$ otherwise.  Let $\ap$ be the standard parabolic subalgebra defined by the subset $\set{1}$ of simple root labels.  The Levi factor of $\ap$ is then $\al \cong \as \oplus \SLA{gl}{1}^{\oplus 3}$ with $\as\cong \SLA{sl}{2}$.

Our first task is to construct the irreducible non-semisimple parabolic family $\mdp$ of $\SLA{so}{8}$-modules.  This parabolic family will be $-\alpha_1$-bijective (see \cref{def:bijective}) and shall contain the simple \hw{} $\SLA{so}{8}$-module $\mdl_{\Lambda_1}$ as a submodule. This will allow us to apply \cref{thm:zhuind} to conclude that $\ind{\mdp}$ is an $\sovoa{-2}{8}$-module.  The key step in this construction is the existence of an irreducible $-\sroot{}$-bijective coherent family $\mdc$ of $\as$-modules ($\sroot{}$ denoting the simple root of $\as \cong \SLA{sl}{2}$) that contains the bounded \hw{} $\as$-module $\mdl_{-\sroot{}}^{\as}$ as a submodule.   Tensoring with an appropriate $\SLA{gl}{1}^{\oplus 3}$-module and inducing with $\pind{\ap}$ will then give the desired parabolic family $\mdp$.  We shall outline a construction of the coherent family $\mdc$ using induction for completeness.  Readers for whom the existence of $\mdc$ is clear may safely skip the next two paragraphs.

Recall that one may construct a dense $\as$-module $\mdd_{\lambda;q}$ by inducing the one-dimensional module $\CC_{\lambda;q}$ of the centraliser $\cent{\ahs}{\as} \cong \CC[h,\cas]$:
\begin{equation}
	\mdd_{\lambda;q}=\envalg{\as}\otimes_{\cent{\ahs}{\as}}\CC_{\lambda;q}.
\end{equation}
Here, $\ahs = \vspn \set{h}$ is the Cartan subalgebra of $\as$, $\cas$ is the quadratic Casimir of $\as$, $\lambda$ is the unique weight of $\CC_{\lambda;q}$, and $q$ is the $\cas$-eigenvalue.  It is easy to show that $\mdd_{\lambda;q}$ has 1-dimensional weight spaces with weight support $\lambda+\rlat_{\as}$ (further details may be found, for example, in \cite[Ex.~3.99]{MazLec10} or \cite[Sec.~3.2]{KawRel18}).  We set $q$ to $0$, noting that it follows that the weight of any \hw{} or \lwv{} in $\mdd_{\lambda;0}$ belongs to $\set{0, \pm \sroot{}}$.  We conclude that the $\mdd_{\lambda;0}$ are simple and dense, hence $-\sroot{}$-bijective, whenever $\lambda$ does not lie in $\rlat_{\as}$.

Now, $\mdd_{\lambda;0}$ is not simple for $\lambda \in \rlat_{\as}$.  In particular, setting $\lambda$ to $\sroot{}$ results in a dense module $\mdd_{\sroot{};0}$ containing \hwvs{} $u$ and $v$ of $\as$-weights $-\sroot{}$ and $0$, respectively,
satisfying
\begin{equation} \label{eq:soeasy}
 e^{\sroot{}} u = 0, \qquad e^{\sroot{}}v=0  \qquad \text{and} \qquad u = e^{-\sroot{}} v.
\end{equation}
Here, $e^{\pm\sroot{}}$ denotes the root vector of $\as$ corresponding to the root $\pm\sroot{}$.  The composition factors of $\mdd_{\sroot{};0}$ are thus the simple \hw{} $\as$-modules $\mdl_{-\sroot{}}^{\as}$ and $\mdl_0^{\as}$, along with the simple \lw{} $\as$-module $\wref{}(\mdl_{-\sroot{}}^{\as})$, where $\wref{}$ denotes the Weyl reflection of $\as$.  We illustrate the structure of $\mdd_{\sroot{};0}$ in \cref{fig:sl2coh}.  What is most important here, however, is that $\mdd_{\sroot{};0}$ is $-\sroot{}$-bijective.  The direct sum
\begin{equation} \label{eq:socoherent}
	\mdc = \bigoplus_{0 < t \le 1} \mdd_{t \sroot{};0}
\end{equation}
is therefore the desired irreducible $-\sroot{}$-bijective coherent family of $\as \cong \SLA{sl}{2}$-modules.  Note that the \hw{} submodule $\mdl_{-\sroot{}}^{\as} \subset \mdc$ is clearly bounded.

\begin{figure}
	\begin{tikzpicture}[>=stealth,very thick,scale=1.75]
		\node (4) at (3.75,0) {$\cdots$};
		\node[bwt,label=below:{$3\sroot{}$}] (3) at (3,0) {};
		\node[bwt,label=below:{$2\sroot{}$}] (2) at (2,0) {};
		\node[bwt,label=below:{$\sroot{}$}] (1) at (1,0) {};
		\node[bwt,label=below:{$0$},label=above:{$v$}] (0) at (0,0) {};
		\node[bwt,label=below:{$-\sroot{}$},label=above:{$u$}] (-1) at (-1,0) {};
		\node[bwt,label=below:{$-2\sroot{}$}] (-2) at (-2,0) {};
		\node[bwt,label=below:{$-3\sroot{}$}] (-3) at (-3,0) {};
		\node (-4) at (-3.75,0) {$\cdots$};
		\draw[<->] (-4) -- (-3);
		\draw[<->] (-3) -- (-2);
		\draw[<->] (-2) -- (-1);
		\draw[<-] (-1) -- (0);
		\draw[<-] (0) -- (1);
		\draw[<->] (1) -- (2);
		\draw[<->] (2) -- (3);
		\draw[<->] (3) -- (4);
	\end{tikzpicture}
\caption{An illustration of the structure of the non-semisimple dense $\as \cong \SLA{sl}{2}$-module $\mdd_{\sroot{};0}$.  Weights are indicated with black circles and the corresponding weight spaces are all $1$-dimensional.  The \hwvs{} $u$ and $v$ are also indicated above the circles representing their weights.  Arrows pointing right (left) are drawn whenever the action of the $\SLA{sl}{2}$ root vector $e=e^{\sroot{}}$ ($f=e^{-\sroot{}}$) is bijective.  Note that the vector of weight $\sroot{}$ indeed generates the entire module.} \label{fig:sl2coh}
\end{figure}

Having constructed $\mdc$, we now lift it to a coherent family of modules over $\al \cong \SLA{sl}{2} \oplus \SLA{gl}{1}^{\oplus 3} \subseteq \SLA{so}{8}$ by identifying the $\SLA{sl}{2}$ simple root $\sroot{}$ with its $\SLA{so}{8}$ counterpart $\sroot{1}$ and tensoring with the one-dimensional $\SLA{gl}{1}^{\oplus 3}$-module $\CC_{-\fwt{2}} = \vspn \set{w}$ whose sole weight is $-\fwt{2}$.  It follows that $\mdc \otimes \CC_{-\fwt{2}}$ has \hwvs{} $u \otimes w$ and $v \otimes w$ of weights $-\sroot{1} - \fwt{2} = -2 \fwt{1} = \Lambda_1$ and $-\fwt{2} = \Lambda_2$.  Note that $u \otimes w$ generates a simple bounded \hw{} $\al$-submodule of $\mdc \otimes \CC_{-\fwt{2}}$.

Form the parabolic family $\mdp = \pind{\ap}(\mdc \otimes \CC_{-\fwt{2}})$ of $\SLA{so}{8}$-modules.  Because $\ap$ contains the standard Borel, the image of any \hwv{} under $\mdc \otimes \CC_{-\fwt{2}} \ira \mdp$ will again be a \hwv{}.  Moreover, the image of $u \otimes w$ generates a \emph{simple} \hw{} $\SLA{so}{8}$-submodule of $\mdp$, by \cref{prop:functors}\ref{it:functrex} and \ref{it:functsimp}.  This submodule is obviously $\al$-bounded and isomorphic to $\mdl_{\Lambda_1}$.  The image $\ind{\mdl_{\Lambda_1}}$ under the Zhu-induction functor is therefore the simple \hw{} $\sovoa{-2}{8}$-module of highest weight $\Lambda_1$.  By \cref{thm:zhuind}, we may now conclude that every subquotient of $\ind{\mdp}$ is also an $\sovoa{-2}{8}$-module.

In particular, the Zhu-induction of the \hw{} submodule $\mdm$ of $\mdp$ generated by (the image of) the \hwv{} $v \otimes w$ is an $\sovoa{-2}{8}$-module.  It follows from \cref{thm:zhu}\ref{it:zhuindec} that $\mdm$ has $\mdl_{\Lambda_2}$ as a composition factor (because $v \otimes w$ has weight $\Lambda_2$).  However, $u \otimes w$ is also a \hwv{} in $\mdm$, by \eqref{eq:soeasy}, hence $\mdm$ has $\mdl_{\Lambda_1}$ as another composition factor.  As $\mdm$ is \hw{}, it is indecomposable and so we have proved the following \lcnamecref{prop:d4indec}.  We only need remark that the outer automorphisms of $\SLA{so}{8}$ allow us to swap $\Lambda_1$ for $\Lambda_3$ or $\Lambda_4$ in this conclusion.
\begin{proposition} \label{prop:d4indec}
	There exist non-simple \hw{} $\sovoa{-2}{8}$-modules.  In particular, for each $i=1,3,4$, there exists a \hw{} $\sovoa{-2}{8}$-module whose composition factors include $\ind{\mdl_{\Lambda_2}}$ and $\ind{\mdl_{\Lambda_i}}$.
\end{proposition}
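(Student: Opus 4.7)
My plan is to realise the claimed non-simple \hw{} $\sovoa{-2}{8}$-module as $\ind{\mdm}$, where $\mdm$ is a suitable \hw{} submodule of the irreducible non-semisimple parabolic family $\mdp$ constructed above. The strategy has three ingredients: a non-semisimple $-\sroot{1}$-bijective coherent family $\mdc$ of $\as \cong \SLA{sl}{2}$-modules, its lift via $\pind{\ap}$ to a parabolic family $\mdp$ of $\SLA{so}{8}$-modules, and an application of \cref{thm:zhuind} to pass to the \voa{} level.

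First I would exhibit the coherent family. The idea is to take the dense $\as$-module $\mdd_{\sroot{};0}$ obtained by inducing from $\cent{\ahs}{\as}$ the one-dimensional module on which $h$ acts as $\sroot{}$ and $\cas$ as $0$; as described in the excerpt, this $\as$-module is non-semisimple and $-\sroot{}$-bijective with \hwvs{} $u$ and $v$ of weights $-\sroot{}$ and $0$ satisfying $u = e^{-\sroot{}}v$. Direct-summing the $\mdd_{t\sroot{};0}$ over a transversal to $\rlat_{\as}$ as in \eqref{eq:socoherent} produces $\mdc$. Tensoring with $\CC_{-\fwt{2}} = \vspn\set{w}$ then lifts this to a coherent family of $\al$-modules, and $\pind{\ap}$ yields the parabolic family $\mdp$. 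The image of $u \otimes w$ in $\mdp$ will generate a simple $\al$-bounded \hw{} $\SLA{so}{8}$-submodule isomorphic to $\mdl_{\Lambda_1}$ by \cref{prop:functors}\ref{it:functrex} and \ref{it:functsimp}. Since its Zhu-induction is the known simple \hw{} $\sovoa{-2}{8}$-module of highest weight $\Lambda_1$, and since $\mdp$ is $-\sroot{1}$-bijective, \cref{thm:zhuind} then implies that every subquotient of $\ind{\mdp}$ is an $\sovoa{-2}{8}$-module.

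Finally, I would take $\mdm \subset \mdp$ to be the submodule generated by the image of $v \otimes w$. The relation $u = e^{-\sroot{}}v$, preserved under tensoring with $\CC_{-\fwt{2}}$ and under $\pind{\ap}$, ensures that the image of $u \otimes w$ also lies in $\mdm$, so $\mdm$ is \hw{} of highest weight $\Lambda_2$ with both $\mdl_{\Lambda_1}$ and $\mdl_{\Lambda_2}$ among its composition factors. Applying $\ind{\blank}$ then yields a non-semisimple \hw{} $\sovoa{-2}{8}$-module whose composition factors include $\ind{\mdl_{\Lambda_1}}$ and $\ind{\mdl_{\Lambda_2}}$ by \cref{thm:zhu}\ref{it:zhuindec}, handling the case $i = 1$; the cases $i = 3, 4$ will follow by twisting with the $\ZZ_3$ outer automorphism of $\SLA{so}{8}$ that permutes $\Lambda_1, \Lambda_3, \Lambda_4$. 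The main delicacy I anticipate lies in verifying that the non-split extension structure of $\mdd_{\sroot{};0}$ survives both the tensor product with $\CC_{-\fwt{2}}$ and parabolic induction in a way that keeps $\ind{\mdm}$ genuinely indecomposable; this should follow from \pbw{} together with the inclusion- and simplicity-preserving properties of $\pind{\ap}$ collected in \cref{prop:functors}.
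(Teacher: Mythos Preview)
Your proposal is correct and follows essentially the same route as the paper: construct the $-\sroot{1}$-bijective coherent family via the dense $\SLA{sl}{2}$-module $\mdd_{\sroot{};0}$, tensor with $\CC_{-\fwt{2}}$ and apply $\pind{\ap}$ to obtain $\mdp$, use the simple $\al$-bounded submodule $\mdl_{\Lambda_1}$ to invoke \cref{thm:zhuind}, and then take $\mdm$ to be the \hw{} submodule generated by the image of $v \otimes w$, with the cases $i=3,4$ handled by the triality outer automorphism. The delicacy you flag is exactly the one the paper addresses, and your justification via \pbw{} and \cref{prop:functors} is the same one used there.
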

\noindent To the best of our knowledge, this is the first time that non-semisimplicity has been demonstrated in category $\categ{O}$ for a quasilisse affine \voa{} (non-quasilisse examples with $\categ{O}$ non-semisimple are already known, see \cite[Rem.~5.8]{AdaRep08} and \cite[Thm.~7.2]{AraShe17}).  In this regard, it is also interesting to note that the character of the vacuum $\sovoa{-2}{8}$-module $\ind{\mdl_0}$ is quasimodular, but not modular \cite{AraQua16}.  See also \cite{KawWal18} for a relation between non-admissible levels and semisimplicity.

It is in fact easy to show that the non-simple \hw{} $\sovoa{-2}{8}$-modules that we have constructed have no other composition factors.  First, consideration of the conformal weights of the \hwvs{} of the five possible isomorphism classes of composition factors lets us conclude that any composition factor of $\ind{\mdm}$ with non-zero highest weight will already be detected as a composition factor of $\mdm \subset \mdp$.  Clearly, $\mdl_{\Lambda_2}$ appears with multiplicity $1$ in $\mdm$.  Because $\Lambda_i = \Lambda_2 - \sroot{i}$, for $i=1,3,4$, the multiplicity of $\mdl_{\Lambda_i}$ in $\mdm$ is at most $1$.  \cref{prop:d4indec} then shows that this multiplicity is exactly $1$ for $i=1$.

To determine the multiplicity of $\mdl_{\Lambda_j}$, $j=3,4$, it suffices to check the subsingularity of the vector $e^{-\sroot{j}} (v \otimes w)$ in $\mdp$.  However, acting with any simple root vector necessarily gives $0$, so we conclude that this vector is either singular or zero.  Either way, the $\SLA{so}{8}$-module it generates in $\mdp$ has zero intersection with $\uinv{\ap} \mdp = \mdc \otimes \CC_{-\fwt{2}}$, hence $e^{-\sroot{j}} v$ must be zero in $\mdp$ by definition.  As it must therefore also be zero in $\mdm$, we conclude that $\mdl_{\Lambda_j}$ is not a composition factor of $\mdm$, hence that $\ind{\mdl_{\Lambda_j}}$ is not a composition factor of $\ind{\mdm}$, for $j=3,4$.

A similar argument rules out the vacuum module $\ind{\mdl_{\Lambda_0}}$ appearing as a composition factor.  For this, we note that the highest root of $\SLA{so}{8}$ is $\hroot = \fwt{2}$.  It follows that the multiplicity of $\ind{\mdl_{\Lambda_0}}$ in $\ind{\mdm}$ is also at most $1$.  This time, the precise multiplicity may be determined by studying the subsingularity of $e^{\hroot}_{-1} v \in \ind{\mdm}$.  However, this is easily shown to be a \sv{} or zero, using $\kk=-2$.  Either way, the submodule it generates has zero intersection with $\mdm$, hence it vanishes by \cref{thm:zhu}\ref{it:zhuindec}.
This gives the desired conclusion.
\begin{proposition} \label{prop:d4ses}
	For $\sovoa{-2}{8}$, there exist in category $\categ{O}$ \emph{non-split} extensions of $\ind{\mdl_{\Lambda_2}}$ by $\ind{\mdl_{\Lambda_i}}$, where $i=1,3,4$.  In other words, there exist indecomposable $\sovoa{-2}{8}$-modules $\ind{\mdm_i}$, $i=1,3,4$, in $\categ{O}$ such that
	\begin{equation}
		\dses{\ind{\mdl_{\Lambda_i}}}{}{\ind{\mdm_i}}{}{\ind{\mdl_{\Lambda_2}}}
	\end{equation}
	is exact.
\end{proposition}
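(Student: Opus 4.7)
The plan is to promote the composition series of the top space $\mdm$ to a short exact sequence of $\sovoa{-2}{8}$-modules via Zhu-induction, and then to establish non-splittability using the indecomposability of $\mdm$ as a \hw{} $\SLA{so}{8}$-module.

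For $i=1$, I would first observe that, as an $\aaz_{-2}$-module, $\mdm$ sits in the short exact sequence
\[
\dses{\mdl_{\Lambda_1}}{}{\mdm}{}{\mdl_{\Lambda_2}},
\]
where the submodule is the simple \hwm{} generated by $u \otimes w = e^{-\sroot{1}}(v \otimes w)$, and the quotient is forced since the remaining composition factor in $\mdm$ is $\mdl_{\Lambda_2}$. Next, I would let $N \subseteq \ind{\mdm}$ denote the $\sovoa{-2}{8}$-submodule generated by $\mdl_{\Lambda_1} \subset \mdm \cong \zhu{\ind{\mdm}}$. Since $\zhu{\blank}$ is just the functor of $t\ag[t]$-invariants, $\zhu{N} = N \cap \mdm$ is an $\aaz_{-2}$-submodule of $\mdm$ containing $\mdl_{\Lambda_1}$; it cannot be all of $\mdm$ because $v \otimes w$ is not in the $\SLA{so}{8}$-submodule of $\mdm$ generated by $u \otimes w$. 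Hence $\zhu{N} = \mdl_{\Lambda_1}$, which is simple, so by \cref{thm:zhu}\ref{it:zhusimp} the \hw{} $\sovoa{-2}{8}$-module $N$ must itself be simple, and therefore equal to $\ind{\mdl_{\Lambda_1}}$.

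Since we already know that $\ind{\mdm}$ has exactly two composition factors in $\categ{O}$, namely $\ind{\mdl_{\Lambda_1}}$ and $\ind{\mdl_{\Lambda_2}}$ each of multiplicity one, the quotient $\ind{\mdm}/N$ has length one and must be isomorphic to $\ind{\mdl_{\Lambda_2}}$. Setting $\ind{\mdm_1} := \ind{\mdm}$ yields the desired short exact sequence for $i=1$. For non-splittability, I would use that a splitting would give $\mdm \cong \zhu{\ind{\mdm}} \cong \mdl_{\Lambda_1} \oplus \mdl_{\Lambda_2}$, since $\zhu{\blank}$ commutes with direct sums; this contradicts the indecomposability of the \hwm{} $\mdm$, which is cyclically generated by $v \otimes w$. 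The cases $i = 3, 4$ reduce to $i = 1$ by applying the $\ZZ_3$ triality outer automorphism of $\SLA{so}{8}$, which permutes $\Lambda_1, \Lambda_3, \Lambda_4$ and fixes $\Lambda_2$. The chief subtlety is pinning down $\zhu{N}$ as $\mdl_{\Lambda_1}$ rather than as all of $\mdm$, but this is forced cleanly by the submodule structure of $\mdm$ itself.
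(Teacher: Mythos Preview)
Your proposal is correct and follows essentially the same route as the paper.  The paper establishes, in the paragraphs between \cref{prop:d4indec} and \cref{prop:d4ses}, that $\ind{\mdm}$ is an indecomposable \hw{} $\sovoa{-2}{8}$-module with exactly the two composition factors $\ind{\mdl_{\Lambda_1}}$ and $\ind{\mdl_{\Lambda_2}}$, and then states the proposition as an immediate consequence; you make the passage to the short exact sequence explicit by identifying the submodule $N$ via $\zhu{N} = N \cap \mdm = \envalg{\ag}(u\otimes w) = \mdl_{\Lambda_1}$ and invoking \cref{thm:zhu}\ref{it:zhusimp}, which is exactly the standard Zhu-theoretic justification the paper is tacitly relying on.  The triality reduction for $i=3,4$ is also how the paper handles those cases.
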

\noindent As before, this follows for $i=3,4$ from the argument above for $\mdm_1 = \mdm$ by applying outer automorphisms.

We conclude by noting that the above arguments also establish the existence of non-split short exact sequences of modules over $\envalg{\SLA{so}{8}} / \aaj$, where $\aaj$ is the Joseph ideal of $\SLA{so}{8}$.  Recall that when $\ag$ is not of type \type{A}, there is a unique completely prime primitive ideal of $\envalg{\ag}$, the \emph{Joseph ideal}, whose associated variety is the closure of the minimal nilpotent orbit in $\ag^*$.  As $\zhu{\sovoa{-2}{8}} \cong \CC \times \envalg{\SLA{so}{8}} / \aaj$ \cite[Thm.~3.1]{AraJos18}, it follows that a $\zhu{\sovoa{-2}{8}}$-module with no composition factor isomorphic to $\mdl_0$ is automatically a $\envalg{\SLA{so}{8}} / \aaj$-module.
\begin{corollary} \label{cor:joseph}
	There exist non-split short exact sequences of $\envalg{\SLA{so}{8}} / \aaj$-modules, including
	\begin{equation}
		\dses{\mdl_{\Lambda_i}}{}{\mdm_i}{}{\mdl_{\Lambda_2}}, \qquad i=1,3,4.
	\end{equation}
\end{corollary}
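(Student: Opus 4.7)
The approach is to transfer the non-split extensions of $\sovoa{-2}{8}$-modules from \cref{prop:d4ses} to the side of the Zhu algebra and invoke the isomorphism $\zhu{\sovoa{-2}{8}} \cong \CC \times \envalg{\SLA{so}{8}} / \aaj$ recorded in the remark preceding the corollary. The key observation is that each $\mdm_i$ appearing in \cref{prop:d4ses} was already defined in the preceding discussion as the \hw{} $\SLA{so}{8}$-submodule of the parabolic family $\mdp$ generated by the single \hwv{} $v \otimes w$ (for $i=1$, with outer-automorphism twists providing $i=3,4$). Thus $\mdm_i$ is a \hwm{} with top composition factor $\mdl_{\Lambda_2}$ and bottom composition factor $\mdl_{\Lambda_i}$, and in particular it is indecomposable; the short exact sequence $\dses{\mdl_{\Lambda_i}}{}{\mdm_i}{}{\mdl_{\Lambda_2}}$ of $\SLA{so}{8}$-modules is therefore non-split.

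Next, I would verify that $\mdm_i$ is naturally a $\envalg{\SLA{so}{8}} / \aaj$-module. By \cref{thm:zhu}\ref{it:zhuindec}, we have $\mdm_i \cong \zhu{\ind{\mdm_i}}$, and since $\ind{\mdm_i}$ is a $\sovoa{-2}{8}$-module by \cref{prop:d4ses}, this makes $\mdm_i$ a $\zhu{\sovoa{-2}{8}}$-module. Its composition factors are $\mdl_{\Lambda_i}$ and $\mdl_{\Lambda_2}$, neither of which is isomorphic to the vacuum module $\mdl_0$, so the criterion stated just before the corollary applies and $\mdm_i$ is in fact a $\envalg{\SLA{so}{8}} / \aaj$-module. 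Non-splitness at this level is automatic from non-splitness over $\SLA{so}{8}$, since the category of $\envalg{\SLA{so}{8}} / \aaj$-modules embeds as a full subcategory of that of $\SLA{so}{8}$-modules. The cases $i=3,4$ then follow from the case $i=1$ by applying the $\ZZ_3$ outer automorphism of $\SLA{so}{8}$, which preserves $\aaj$ because the Joseph ideal is uniquely characterised by its associated variety.

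I do not anticipate any serious obstacle: the argument is essentially a bookkeeping exercise that repackages results already established in \cref{sec:catO}. The only mildly subtle ingredient is the identification of $\zhu{\sovoa{-2}{8}}$-modules without vacuum composition factors with $\envalg{\SLA{so}{8}} / \aaj$-modules, and this is precisely the observation made immediately before the corollary.
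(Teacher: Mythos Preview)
Your proposal is correct and follows essentially the same route as the paper: the corollary is deduced from the observation, stated immediately before it, that a $\zhu{\sovoa{-2}{8}}$-module with no composition factor isomorphic to $\mdl_0$ is automatically a $\envalg{\SLA{so}{8}}/\aaj$-module. The only minor redundancy is your detour through $\mdm_i \cong \zhu{\ind{\mdm_i}}$ to conclude that $\mdm_i$ is a $\zhu{\sovoa{-2}{8}}$-module; this is already established earlier in \cref{sec:catO} (via \cref{thm:parabolicindecomposable}, $\mdp$ and hence its submodule $\mdm_i$ is a $\aaz_{-2}$-module), so you can simply cite that fact directly rather than passing through Zhu induction and back.
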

\noindent We thank Tomoyuki Arakawa for pointing out this interesting consequence of our work.

\flushleft
\providecommand{\opp}[2]{\textsf{arXiv:\mbox{#2}/#1}}
\providecommand{\pp}[2]{\textsf{arXiv:#1 [\mbox{#2}]}}

\end{document}